\newtheorem{thm}[equation]{Theorem}
\newtheorem{lem}[equation]{Lemma}
\newtheorem{cor}[equation]{Corollary}
\newtheorem{prop}[equation]{Proposition}
\newtheorem*{thm*}{Theorem}
\newtheorem*{prop*}{Proposition}
\newtheorem*{cor*}{Corollary}
\newtheorem*{lem*}{Lemma}
\newtheorem*{MT*}{Main Theorem}
\newtheorem*{ques*}{Question}
\theoremstyle{definition} %
\newtheorem{defn}[equation]{Definition}
\newtheorem*{defn*}{Definition}
\newtheorem{eg}[equation]{Example}
\theoremstyle{remark} %
\newtheorem{rmk}[equation]{Remark}
\newtheorem*{rmk*}{Remark}
\newtheorem*{rmks*}{Remarks}
\DeclareMathOperator{\Tr}{Tr}
\newcommand{\jprod}{\bullet}
\DeclareMathOperator{\car}{char}
\DeclareMathOperator{\Iso}{Iso}
\DeclareMathOperator{\Isom}{Isom}
\DeclareMathOperator{\Lie}{Lie}
\DeclareMathOperator{\Pic}{Pic}
\DeclareMathOperator{\Hom}{Hom}
\DeclareMathOperator{\Max}{Max}
\newcommand{\Pol}{\mathscr{P}}
\newcommand{\eps}{\varepsilon}
\newcommand{\grd}{\nabla}  % directional derivative symbol
\newcommand{\trans}{\top}
\newcommand{\Frd}{Q}  % symbol for our new kind of Freudenthal triple system
\newcommand{\CC}{\mathbb{C}}
\newcommand{\HH}{\mathbb{H}}
\newcommand{\OO}{\mathbb{O}}
\newcommand{\ZZ}{\mathbb{Z}}
\newcommand{\IZ}{\ZZ}
\newcommand{\QQ}{\mathbb{Q}}
\newcommand{\IQ}{\QQ}
\newcommand{\RR}{\mathbb{R}}
\newcommand{\FF}{\mathbb{F}}
\newcommand{\IR}{\RR}
\newcommand{\La}{\Lambda}
\newcommand{\oct}{\mathcal{O}}
\newcommand{\eff}{\mathsf{F}}
\newcommand{\gee}{\mathsf{G}}
\newcommand{\ay}{\mathsf{A}}
\newcommand{\bee}{\mathsf{B}}
\newcommand{\cee}{\mathsf{C}}
\newcommand{\ee}{\mathsf{E}}
\newcommand{\Gm}{\mathbb{G}_{\mathrm{m}}}
\newcommand{\nat}{\sharp}  % the sharp operator
\DeclareMathOperator{\Sq}{Sq}
\newcommand{\Hind}{H^1_{\mathrm{ind}}}
\newcommand{\stbtmat}[4]{\left( \begin{smallmatrix} #1 & #2 \\ #3 & #4 \end{smallmatrix} \right)}  % small 2x2 matrix
\newcommand{\sjordmat}[6]{\left( \begin{smallmatrix} #1 & #6 & \cdot \\ \cdot & #2 & #4 \\ #5 & \cdot & #3 \end{smallmatrix} \right)}
\newcommand{\sbasjord}{\sjordmat{\alpha_1}{\alpha_2}{\alpha_3}{c_1}{c_2}{c_3}}
\newcommand{\sbasGjord}{\left( \begin{smallmatrix} \alpha_1 & \gamma_2 c_3 &  \gamma_3 \bar{c}_2 \\ \gamma_1  \bar{c}_3 & \alpha_2 & \gamma_3 c_1 \\ \gamma_1 c_2  &  \gamma_2 \bar{c}_1 & \alpha_3 \end{smallmatrix} \right)}
\newcommand{\alg}[1]{{#1\text{-\textbf{alg}}}}
\newcommand{\Ralg}{\alg{R}}
\newcommand{\Rx}{R^\times}
\newcommand{\iso}[2]{#1^{(#2)}}
\newcommand{\uiso}[1]{\iso{#1}{u}}
\DeclareMathOperator{\Aut}{Aut}
\DeclareMathOperator{\Mat}{Mat}
\DeclareMathOperator{\Nil}{Nil}
\DeclareMathOperator{\Spec}{Spec}
\newcommand{\bfIsom}{\mathbf{Isom}}
\newcommand{\bfIso}{\mathbf{Iso}}
\newcommand{\bfAut}{\mathbf{Aut}}
\newcommand{\bfG}{\mathbf{G}}
\newcommand{\bfGL}{\mathbf{GL}}
\newcommand{\bfX}{\mathbf{X}}
\newcommand{\bfM}{\mathbf{M}}
\newcommand{\bfm}{\mathbf{m}}
\newcommand{\bfp}{\mathbf{p}}
\newcommand{\idl}{\mathfrak{a}}
\newcommand{\mfm}{\mathfrak{m}}
\newcommand{\mfp}{\mathfrak{p}}
\newcommand{\mfq}{\mathfrak{q}}
\newcommand{\qform}[1]{{\left\langle{#1}\right\rangle}}                   % a quadratic form
\DeclareMathOperator{\PGL}{PGL}
\DeclareMathOperator{\PGSp}{PGSp}
\DeclareMathOperator{\Her}{Her}
\DeclareMathOperator{\End}{End}
\DeclareMathOperator{\Id}{Id}
\DeclareMathOperator{\Zor}{Zor}
\newcommand{\W}{\mathbf{W}}  % module into a sheaf
\DeclareMathOperator{\GL}{GL}
\newcommand{\Sym}{\mathsf{S}}
\numberwithin{equation}{section}
\begin{document}

\title{Albert algebras over $\ZZ$ and other rings}

\author[S. Garibaldi]{Skip Garibaldi}

\author[H.P. Petersson]{Holger P. Petersson}

\author[M.L. Racine]{Michel L. Racine}

% 17C40 = exceptional Jordan structures
% 17C30 = automorphism groups of Jordan algebras
% 20G41 = exceptional groups
\subjclass{Primary 17C40; Secondary 17C30, 20G41}

\begin{abstract}
Albert algebras, a specific kind of Jordan algebra, are naturally distinguished objects among commutative non-associative algebras and also arise naturally in the context of simple affine group schemes of type $\eff_4$, $\ee_6$, or $\ee_7$.  We study these objects over an arbitrary base ring $R$, with particular attention to the case $R = \ZZ$.
We prove in this generality results previously in the literature in the special case where $R$ is a field of characteristic different from 2 and 3.
\end{abstract}

\maketitle

\setcounter{tocdepth}{1}
\tableofcontents
%
%%%%%%%%%%%%%%%%%%%%%%%%%%%%%%%%%%%%%%%%%%%%%%%%%%%%%%%%%%
\section{Introduction}

Albert algebras, a specific kind of Jordan algebra, are naturally distinguished objects among commutative non-associative algebras and also arise naturally in the context of simple affine group schemes of type $\eff_4$, $\ee_6$, or $\ee_7$.  We study these objects over an arbitrary base ring $R$, with particular attention to the case $R = \ZZ$.
We prove in this generality results previously in the literature in the special case where $R$ is a field of characteristic different from 2 and 3.

\subsection*{Why Albert algebras?}
In the setting of semisimple algebraic groups over a field, a standard technique for computing with elements of a group --- especially an anisotropic group --- is to interpret the group in terms of automorphisms of some algebraic structure, such as viewing an adjoint group of type $\bee_n$ as the special orthogonal group of a quadratic form of dimension $2n+1$, or an adjoint group of type $\ay_n$ as the automorphism group of an Azumaya algebra of rank $(n+1)^2$.  This approach can be seen in many references, from \cite{Weil}, through \cite{KMRT} and \cite{Conrad:Z}.  In this vein, Albert algebras appear as a natural tool for computations related to $\eff_4$, $\ee_6$, and $\ee_7$ groups, as we do below.

In the setting of nonassociative algebras, Albert algebras arise naturally.  Among commutative not-necessarily-associative algebras under additional mild hypotheses (the field has characteristic $\ne 2, 3, 5$ and the algebra is metrized), every algebra satisfying a polynomial identity of degree $\le 4$ is a Jordan algebra, see \cite[Prop.~A.8]{ChayetG}.  Jordan algebras have an analogue of the Wedderburn-Artin theory for associative algebras \cite[p.~201, Cor.~2]{Jac:J}, and one finds that all the simple Jordan algebras are closely related to associative algebras (more precisely, ``are special'') except for one kind, the Albert algebras, see for example \cite[p.~210, Th.~11]{Jac:J} or \cite{McCZ}.

\subsection*{Our contribution}
In the setting of nonassociative algebras, we prove a classification of Albert algebras over $\ZZ$ (Theorem \ref{skip.AlbZ}), which was viewed as an open question in the context of nonassociative algebra; here we see that it is equivalent to the classification of groups of type $\eff_4$, which was known, see \cite{Conrad:Z} which leverages   \cite{Gross:Z} and \cite{ElkiesGross}.  We also prove new results about ideals in Albert algebras (Theorem \ref{IDCUJM}), about isotopy of Albert algebras over semi-local rings (Theorem \ref{local.iso}), and about the number of generators of an Albert algebra (Proposition \ref{generators}).  We have not seen Lemma \ref{round} in the literature, even in the case of a base field of characteristic different from 2 and 3.  

In the setting of
affine group schemes, we present  Albert algebras in a streamlined way in Definition \ref{Frd.def}.  Note that this definition is in the context of what was formerly called a ``quadratic'' Jordan algebra --- because instead of a bilinear multiplication one has a quadratic map, the $U$-operator --- and that it makes sense whether or not 2 is invertible in the base ring.  Applying this definition here allows one to replace, in some proofs, ``global'' computations over $\ZZ$ as one finds in \cite{Conrad:Z} with ``local'' computations over an algebraically closed field that exist in several places in the literature (see, for example, the proof of Lemma \ref{skip.aut.alb}).  We also interpret a clever computation in \cite{ElkiesGross} as an example of a general mechanism known as isotopy, see Definition \ref{EG.def}.  Our classification of groups of type $\ee_7$ over $\ZZ$ in Proposition \ref{E6E7} uses general techniques to reduce the problem to computations over $\RR$.

\subsection*{A different definition}
The definition of Albert algebra over a ring $R$ given here (Definition \ref{Frd.def}) is in the context of para-quadratic algebras as recalled at the beginning of \S\ref{jord.sec} --- such an algebra is an $R$-module $M$ with a distinguished element $1_M$ and a quadratic map $U \!: M \to \End_R(M)$ such that $U_{1_M} = \Id_M$.  There are no other axioms to check.  We then define a specific para-quadratic algebra, $\Her_3(\Zor(R))$, in Example \ref{her.quad}, and define an Albert $R$-algebra $J$ to be a para-quadratic $R$-algebra such that $J \otimes S \cong \Her_3(\Zor(R)) \otimes S$ for some faithfully flat $R$-algebra $S$.

A different approach is taken by references such as \cite[\S6.1]{Ptr:surv} or \cite{Alsaody}.  They define an Albert $R$-algebra to be a cubic Jordan $R$-algebra (Definition \ref{cubjord.def}) $J$ whose underlying $R$-module is projective of rank 27 and $J \otimes F$ is a simple algebra for every homomorphism $R$ to a field $F$.  This definition involves axioms that in principle need to be verified over all $R$-algebras.  The two definitions give the same objects.  Theorem 17 in \cite{Ptr:surv} states that an Albert algebra in the sense of that paper is an Albert algebra in the sense of this paper by producing a faithfully flat $R$-algebra; the proof is not written but closely follows an argument for octonion algebras from \cite{LoosPR}.  For the converse, an Albert algebra in the sense of this paper has a projective underlying module (because $\Her_3(\Zor(R))$ does), is a cubic Jordan algebra (Proposition \ref{Frd.cubic}), and satisfies the simplicity condition (Corollary \ref{Frd.simple}).  

%\subsection*{Comparison to other works}
%From the standpoint of semisimple affine group schemes, we focus our attentions on those of type $\gee_2$, $\eff_4$, $\ee_6$, and $\ee_7$.  Readers who are interested in type $\dee_4$ should may wish to consult \cite{AlsaodyGille}.

\subsection*{Acknowledgments} 
We thank the many kind readers who provided suggestions, comments, or encouragement.  The development of this paper was influenced in various ways by \cite{Conrad:Z}, even when it is not explicitly cited.

\section{Notation}
Rings, by definition, have a 1.  We put $\alg{\ZZ}$ for the category of commutative rings, where $\ZZ$ is an initial object.  For any $R \in \alg{\ZZ}$, we put $\alg{R}$ for the category of pairs $(S, f)$ with $S \in \alg{\ZZ}$ and $f \!: R \to S$, i.e., the coslice category $R \downarrow \alg{\ZZ}$.  Below, $R$ will typically denote an element of $\alg{\ZZ}$.  (The interested reader is invited to mentally replace $R$ by a base scheme $X$, $\alg{R}$ with the category of schemes over $X$, finitely generated projective $R$-modules with vector bundles over $X$, etc., thereby translating results below into a language closer to that in \cite{CalmesFasel}.)

We write $\Mat_n(R)$ for the ring of $n$-by-$n$ matrices with entries from $R$; its invertible elements form the group $\GL_n(R)$.  We write $\qform{\alpha_1, \ldots, \alpha_n} \in \Mat_n(R)$ for the diagonal matrix whose $(i,i)$-entry is $\alpha_i$.

Suppose now that $\bfG$ is a finitely presented group scheme over $R$.  For each fppf $S \in \alg{R}$, we write $H^1(S/R, \bfG)$ for the set of isomorphism classes of $\bfG$-torsors over $R$ trivialized by $S$, see for example \cite{Giraud}, \cite{Wa}, or \cite[\S2.2]{CalmesFasel}.  It does not depend on the choice of structure homomorphism $R \to S$ \cite[Rem.~III.3.6.5]{Giraud}.  The subcategory of fppf elements of $\alg{R}$ has a small skeleton, so the colimit
\[
H^1(R, \bfG) := \varinjlim_{\text{fppf $S \in \alg{R}$}} H^1(S/R, \bfG)
\]
is a set.  It is the non-abelian fppf cohomology of $\bfG$.  
In case $\bfG$ is smooth, it agrees with \'etale $H^1$.  If additionally $R$ is a field, then it agrees with the non-abelian Galois cohomology defined in, for example, \cite{SeCG}.

\subsection*{Unimodular elements}
Let $M$ be an $R$-module.  An element $m \in M$ is said to be \emph{unimodular} if $Rm$ is a free $R$-module of rank 1 and a direct summand of $M$, equivalently, if there is some $\lambda \in M^*$ (the dual of $M$) such that $\lambda(m) = 1$.  When $M$ is finitely generated projective, this is equivalent to: $m \otimes 1$ is not zero in $M \otimes F$ for every field $F \in \Ralg$, see for example \cite[0.3]{Loos:genalg}.  

If $m \in M$ is unimodular, then so is $m \otimes 1 \in M \otimes S$ for every $S \in \Ralg$.  In the opposite direction, 
if $M$ is finitely generated projective, $S$ is a cover of $R$ (i.e., $\Spec S \to \Spec R$ is surjective), and $m \otimes 1$ is unimodular in $M \otimes S$, it follows that $m$ is unimodular as an element of $M$.

\section{Background on polynomial laws} \label{directional.sec}

We may identify an $R$-module $M$ with a functor $\W(M)$ from $\alg{R}$ to the category of sets defined via $S \mapsto M \otimes S$.  For $R$-modules $M$, $N$, 
a polynomial law (in the sense of \cite{Roby}) $f \colon \W(M) \to \W(N)$ is a morphism of functors, i.e., a collection of set maps $f_S \colon M \otimes S \to N \otimes S$ varying functorially with $S$.   We put $\Pol_R(M, N)$ for the collection of polynomial laws $\W(M) \to \W(N)$, and omit the subscript $R$ when it is understood.

A polynomial law is \emph{homogeneous of degree} $d \ge 0$ if $f_S(sx) = s^d f_S(x)$ for every $S \in \alg{R}$, $s \in S$, and $x \in M \otimes S$, see \cite[p.~226]{Roby}.  A \emph{form of degree $d$} on $M$ is a polynomial law $\W(M) \to \W(R)$ that is homogeneous of degree $d$.  The  forms of degree 0 are constants, i.e., given by an element of $R$.
Those of degree 1 are $R$-linear maps $M \to R$.   Those of degree 2 are commonly known as quadratic forms on $M$.    We put $\Pol^d_R(M, N)$ for the submodule of $\Pol_R(M, N)$ of polynomial laws that are homogeneous of degree $d$.

It is often useful to argue that a polynomial law $f$ is zero, which a priori means checking a condition for all $S \in \Ralg$.  However, it suffices to verify that $f_T = 0$ for every \emph{local} ring $T \in \Ralg$.  Indeed, for $m \in M \otimes S$, $f_S(m) = 0$ in $N \otimes S$ if and only if $f_S(m) \otimes 1 = f_{S_\mfm}(m \otimes 1) = 0$ in $N \otimes S_\mfm$ for every maximal ideal $\mfm$ of $S$.

\begin{lem} \label{law.uni}
Let $M$ be a finitely generated projective $R$-module, and suppose $f \in \Pol(M, R)$ is   such that $f(0) = 0$.  If $m \in M$ has $f(m) \in R^\times$, then $m$ is unimodular.
\end{lem}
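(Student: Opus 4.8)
The plan is to reduce immediately to the case of a field, using the characterization of unimodularity recalled just above: since $M$ is finitely generated projective, $m$ is unimodular if and only if $m \otimes 1 \neq 0$ in $M \otimes F$ for every field $F \in \Ralg$. So it suffices to fix an arbitrary field $F \in \Ralg$ and show $m \otimes 1 \neq 0$ in $M \otimes F$.

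Let $\varphi \colon R \to F$ be the structure morphism (the unique morphism in $\Ralg$ from the initial object $R$). Argue by contradiction: suppose $m \otimes 1 = 0$ in $M \otimes F$. Applying $f_F$ and using that $f_F$ sends $0$ to $0$ — which follows from $f(0) = 0$ together with functoriality of $f$ along $\varphi$, forcing $f_F(0) = \varphi(f_R(0)) = \varphi(0) = 0$ — we get $f_F(m \otimes 1) = 0$. On the other hand, functoriality of the polynomial law $f$ along $\varphi$ applied to the element $m \in M = M \otimes R$ gives the naturality square identity $f_F(m \otimes 1) = \varphi(f(m))$ (here $N = R$, so $N \otimes F = F$). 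Since $f(m) \in R^\times$ and $\varphi$ is a ring homomorphism into a field, $\varphi(f(m)) \in F^\times$, hence nonzero, contradicting the previous sentence. Therefore $m \otimes 1 \neq 0$ in $M \otimes F$, and as $F$ was arbitrary, $m$ is unimodular.

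There is essentially no serious obstacle: the proof is a two-line combination of the field-valued criterion for unimodularity of a finitely generated projective module and the naturality square built into the definition of a polynomial law. The only point requiring a little care is that the hypothesis $f(0) = 0$ is a statement over $R$, so one must transport it to $F$ via functoriality rather than taking it for granted there.
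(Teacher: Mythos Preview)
Your proof is correct and follows essentially the same approach as the paper: both use the field-valued criterion for unimodularity of elements in a finitely generated projective module, then derive a contradiction from the naturality of $f$ when $m \otimes 1 = 0$ in $M \otimes F$. Your version is slightly more explicit in justifying that $f_F(0) = 0$ follows from the hypothesis $f(0) = 0$ via functoriality, which is a worthwhile point of care.
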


\begin{proof}
 If $m$ is not unimodular, then there is a field $F \in \Ralg$ such that $m \otimes 1 = 0$ in $M \otimes F$, and $f(m \otimes 1) = 0$, whence $f(m)$ belongs to the kernel of $R \to F$, a contradiction.
\end{proof}

\subsection*{Directional derivatives}
For $f \in \Pol(M, N)$, $v \in M$, and $t$ an indeterminate $n \ge 0$, we define a polynomial law $\grd_v^n f$ as follows.  For $S \in \alg{R}$ and $x \in M \otimes S$, 
$f_{S[t]}(x + v \otimes t)$ is an element of $N \otimes S[t]$, and we define $\grd_v^n f_S(x) \in N \otimes S$ to be the coefficient of $t^n$.   This defines a polynomial law called the \emph{$n$-th directional derivative $\grd_v^n f$ of $f$ in the direction $v$}.  One finds that $\grd^0_v f = f$ regardless of $v$.   We abbreviate $\grd_v f := \grd_v^1 f$; it is linear in $v$.

 If $f$ is homogeneous of degree $d$ and $0 \le n \le d$, then $\grd_v^n f(x)$ is homogeneous of degree $d - n$ in $x$ and degree $n$ in $v$.  The symmetry implicit in the definition of the directional derivative gives $\grd_v^n f(x) = \grd_x^{d-n} f(v)$ for $x \in M$. 
 
\begin{lem} \label{law.dense}
Suppose $M$, $N$ are $R$-modules and $A$ is a unital associative $R$-algebra and $g \in \Pol(M , A)$ is a polynomial law such that there is an element $m \in M$ such that $g(m) \in A$ is invertible.  If $f \in \Pol^d(M, N)$  satisfies
\[
g_S(x) \in A_S^\times \Rightarrow f_S(x) = 0
\]
for all $S \in \alg{R}$ and $x \in M \otimes S$, then $f$ is identically zero.
\end{lem}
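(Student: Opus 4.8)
The plan is to verify directly that $f_S(y) = 0$ for every $S \in \alg{R}$ and every $y \in M \otimes S$.  Writing $y = m \otimes 1 + v$ with $v \in M \otimes S$, it is enough to show $f_S(m \otimes 1 + v) = 0$.  The guiding idea is that the point $m \otimes 1$ already lies in the locus where $g$ is invertible --- $g_S(m \otimes 1)$ is the image of the unit $g(m) \in A^\times$ under the structural map $A \to A_S := A \otimes_R S$, hence a unit of $A_S$ --- and that a \emph{nilpotent} perturbation of $m \otimes 1$ cannot escape that locus.  So I would introduce an indeterminate $t$, set $z := m \otimes 1 + tv \in M \otimes S[t]$, and work over the truncation $S' := S[t]/(t^{d+1})$.

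Carrying this out: because $f$ is homogeneous of degree $d$, the element $f_{S[t]}(z) \in N \otimes S[t]$ is a polynomial in $t$ of degree at most $d$ (by homogeneity of $f$; compare the directional derivatives of \S\ref{directional.sec}).  Hence the reduction $N \otimes S[t] \to N \otimes S'$ is injective on $f_{S[t]}(z)$, so it suffices to prove $f_{S'}(\bar z) = 0$, where $\bar z$ is the image of $z$ in $M \otimes S'$.  Now $g_{S'}(\bar z)$ is the image in $A_{S'} := A \otimes_R S'$ of $g_{S[t]}(z)$, and reducing $g_{S[t]}(z)$ modulo $t$ returns $g_S(m \otimes 1) \in A_S^\times$; thus $g_{S'}(\bar z) = u + tw$ where $u$ is the image of the unit $g_S(m \otimes 1)$ and $w \in A_{S'}$.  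In $A_{S'}$ the scalar $t$ is central and satisfies $t^{d+1} = 0$, so $tw$ is nilpotent and $g_{S'}(\bar z) = u\,(1 + u^{-1}tw)$ is again a unit of $A_{S'}$.  Applying the hypothesis of the lemma to the ring $S'$ and the element $\bar z$ gives $f_{S'}(\bar z) = 0$, hence $f_{S[t]}(z) = 0$ by the injectivity just noted; specializing $t \mapsto 1$ (functoriality of $f$ along $S[t] \to S$) yields $f_S(m \otimes 1 + v) = 0$.  As $v$ and $S$ were arbitrary, $f$ is identically zero.

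I expect the only real work to be bookkeeping rather than a genuine obstacle: identifying $A \otimes_R (S[t]/(t^{d+1}))$ with the quotient of $A \otimes_R S[t]$ by the two-sided ideal generated by $t^{d+1}$, checking that $t$ is central in that ring (so that $1$ plus a nilpotent is invertible there even though $A$ need not be commutative), and recalling that units are preserved by the structural homomorphisms $A \to A_S \to A_{S'}$.  The conceptual content is merely that the nonempty locus on which $g$ is invertible is ``dense'' in the strong sense that every point of $M \otimes S$ differs from the distinguished good point $m \otimes 1$ by a direction along which one may deform without leaving that locus.
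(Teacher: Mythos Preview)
Your proof is correct and follows essentially the same route as the paper's: both perturb the distinguished point $m$ by a nilpotent parameter in a truncated polynomial ring $R[\varepsilon]/(\varepsilon^{d+1})$ (your $S[t]/(t^{d+1})$) to stay inside the locus where $g$ is a unit, then invoke the hypothesis.  The only cosmetic difference is in the final extraction: the paper reads off the $\varepsilon^d$-coefficient of $f(m+\varepsilon v)$ and uses the symmetry $\grd_v^d f(m)=\grd_m^{0} f(v)=f(v)$, whereas you observe that the entire polynomial $f_{S[t]}(m+tv)$ has degree $\le d$, hence is killed by the truncation only if it is zero, and then specialize $t\mapsto 1$.
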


\begin{proof}
Since the hypotheses are stable under base change, it suffices to show that $f(v) = 0$ for all $v \in M$. Replacing $g$ by $L \circ g \in \Pol(M, A)$, where $L \in \End_R(A)$ is multiplication in $A$ on the left by the inverse of $g(m)$, we may assume $g(m) = 1_A$.  Set $S := R[\eps]/(\eps^{d+1})$. For $v \in M$, the element
\[
g_S(m + \eps v) = 1_A + \sum_{n=1}^d \eps^n \grd_v^n g(m)
\]
is invertible in $A_S$, so by hypothesis,
\[
0 = f_S(m+\eps v) = \sum_{n = 0}^d \eps^n \grd_v^n f(m).
\]
Focusing on the coefficient of $\eps^d$ in that equation gives
\[
0 = \grd_v^d f(m) = \grd_m^0 f(v) = f(v),
\]
as required.
\end{proof}

\subsection*{The module of polynomial laws}
In the following, we write $\Sym^n M$ for the $n$-th symmetric power of $M$, i.e., the $R$-module $\otimes^n M$ modulo the submodule generated by elements $x - \sigma(x)$ for $x \in \otimes^n M$ and $\sigma$ a permutation of the $n$ factors.

\begin{lem} \label{pol}
Let $M$ and $N$ be finitely generated projective $R$-modules.  Then for each $d \ge 0$:
\begin{enumerate}
\item \label{pol.proj} $\Pol^d(M, N)$ is a finitely generated projective $R$-module.
\item \label{pol.base} If $T \in \Ralg$ is faithfully flat, the natural map $\Pol^d_R(M, N) \otimes T \to \Pol^d_T(M \otimes T, N \otimes T)$ is an isomorphism.
\item \label{pol.S} The natural map $\Sym^d(M^*) \otimes N \to \Pol^d(M, N)$ is an isomorphism.
\item \label{pol.Loos} The natural map $\Pol^d(M, R) \otimes N \to \Pol^d(M, N)$ is an isomorphism.
\end{enumerate}
\end{lem}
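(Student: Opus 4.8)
The plan is to deduce all four statements from Roby's corepresentability theorem \cite{Roby}: for arbitrary $R$-modules $M$ and $N$ there is an isomorphism $\Pol^d_R(M, N) \cong \Hom_R(\Gamma^d_R(M), N)$, natural in $N$, given by composition with the universal homogeneous degree-$d$ law $x \mapsto x^{[d]}$ from $M$ into its $d$-th divided power $\Gamma^d_R(M)$. Thus the real work is to record a few properties of $\Gamma^d$ when $M$ is finitely generated projective, after which each part falls out by routine manipulation of $\Hom$, duals, and tensor products of finitely generated projective modules.

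So I would first establish: (a) $\Gamma^d_R(M)$ is finitely generated projective; (b) $\Gamma^d$ commutes with flat base change, $\Gamma^d_R(M) \otimes_R T \cong \Gamma^d_T(M \otimes_R T)$ for $T$ flat; and (c) there is a natural isomorphism $(\Gamma^d_R M)^* \cong \Sym^d_R(M^*)$. For all three, write $M$ as a direct summand of a finite free module, $R^k \cong M \oplus M'$. The canonical isomorphism of divided power algebras $\Gamma_R(M \oplus M') \cong \Gamma_R(M) \otimes_R \Gamma_R(M')$ decomposes the finite free module $\Gamma^d_R(R^k)$ as $\bigoplus_{i+j=d} \Gamma^i_R(M) \otimes_R \Gamma^j_R(M')$, and the $(i,j)=(d,0)$ summand is $\Gamma^d_R(M)$, giving (a). For (b) one reduces to $M$ free, where both sides are free of rank $\binom{n+d-1}{d}$ and the comparison map is visibly an isomorphism. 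For (c) one matches the displayed decomposition with $\Sym^d_R((R^k)^*) \cong \bigoplus_{i+j=d} \Sym^i_R(M^*) \otimes_R \Sym^j_R((M')^*)$ and invokes the classical perfect pairing between $\Gamma^d$ and $\Sym^d$ of a finite free module.

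Granting (a)--(c): for part~\eqref{pol.proj}, $\Pol^d(M, N) \cong \Hom_R(\Gamma^d_R M, N) \cong (\Gamma^d_R M)^* \otimes_R N$ is a tensor product of finitely generated projective modules. For part~\eqref{pol.S}, the same chain together with (c) gives $\Pol^d(M, N) \cong \Sym^d_R(M^*) \otimes_R N$, and unwinding the universal law on a finite free module identifies this composite with the stated natural map $\phi_1 \cdots \phi_d \otimes n \mapsto \bigl(x \mapsto \phi_1(x) \cdots \phi_d(x)\, n\bigr)$. Part~\eqref{pol.Loos} then follows by taking $N = R$ in part~\eqref{pol.S} and re-applying part~\eqref{pol.S}. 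For part~\eqref{pol.base}, set $P := \Gamma^d_R M$; since $P$ is finitely presented and $T$ is flat, $\Hom_R(P, N) \otimes_R T \cong \Hom_T(P \otimes_R T, N \otimes_R T)$, and combining this with (b) and Roby's isomorphism over $T$ yields the claim, the composite being the asserted natural map by naturality of each arrow. (A divided-power-free route is also available: parts~\eqref{pol.proj}, \eqref{pol.base}, \eqref{pol.Loos} all reduce to part~\eqref{pol.S}, and part~\eqref{pol.S} reduces --- using that a polynomial law into a direct sum is a tuple of laws, together with the analogous multi-homogeneous decomposition in the source --- to $M = R^n$, $N = R$, where $\Pol^d(R^n, R)$ is simply the module of homogeneous polynomials of degree $d$ in $n$ variables.)

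I expect the only real friction to be bookkeeping: verifying that the maps named ``natural'' in the statement coincide with the isomorphisms produced through Roby's theorem and the $\Gamma^d$--$\Sym^d$ duality. This requires the universal law to be made explicit enough --- its value at the generic point of $R^n$ is the tuple of divided-power monomials --- to see that, after dualizing, precomposition with it returns the product-of-linear-forms description. The one genuine subtlety to keep in mind is that $\Gamma^d$ commutes with \emph{flat}, not arbitrary, base change; this is precisely why flatness is used in part~\eqref{pol.base} and why all the reductions are carried out inside the category of finitely generated projective modules, where forming $\Gamma^d$, $\Sym^d$, duals, and $\Hom$ behaves well.
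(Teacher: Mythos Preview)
Your proposal is correct and takes essentially the same approach as the paper: both deduce everything from Roby's identification $\Pol^d_R(M,N)\cong\Hom_R(\Gamma^d_R(M),N)$ together with the standard properties of $\Gamma^d$ on finitely generated projective modules. The only noteworthy difference is in how part~\eqref{pol.S} is reduced to the free case: the paper first proves part~\eqref{pol.base} and then uses faithfully flat descent (plus a product decomposition of $R$) to pass from free modules to finitely generated projective ones, whereas you argue directly via the direct-summand decomposition of $\Gamma^d$ and the classical $\Gamma^d$--$\Sym^d$ pairing, which is slightly more self-contained and avoids the descent step.
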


\begin{proof}
$\Pol^d_R(M, N)$ is naturally isomorphic to $\Hom_R(\Gamma_d(M), N)$ by \cite[Th.~IV.1]{Roby}, where $\Gamma_d(M)$ denotes the module of degree $d$ divided powers on $M$.   Then $\Pol^d_R(M, N) \otimes T \cong \Hom_R(\Gamma_d(M), N) \otimes T$, which in turn is $\Hom_T(\Gamma_d(M) \otimes T, N \otimes T)$ because $T$ is faithfully flat \cite[p.~33, Prop.~II.2.5]{KO}.  Now $\Gamma_d(M) \otimes T \cong \Gamma_d(M \otimes T)$ by  \cite[\S{IV.5}, Exercise 7]{Bou:alg2}, completing the proof of \eqref{pol.base}.

\eqref{pol.S}: If $M$ and $N$ are free modules, then the map is an isomorphism by \cite[p.~232]{Roby}.  If $M$ and $N$ have constant rank, then there is a faithfully flat $T \in \Ralg$ such that $M \otimes T$ and $N \otimes T$ are free.  Since \eqref{pol.S} holds over $T$ by the free case, \eqref{pol.base} and faithfully flat descent give that \eqref{pol.S} holds.  In the general case, since $M$ and $N$ are finitely generated projective, we may write $R = \prod_{i=0}^n R_i$ for some $n$ such that $M = \oplus M_i$ and $N = \oplus N_i$ with each $M_i$, $N_i$ an $R_i$-module of finite constant rank.  Then $\Pol^d(M, N) = \oplus \Pol^d(M_i, N_i)$ and $\Sym^d(M^*) \otimes N = \oplus (\Sym^d(M_i^*) \otimes N_i)$ and the claim follows by the constant rank case.

\eqref{pol.Loos} follows trivially from \eqref{pol.S}.  For \eqref{pol.proj}, note that $M^*$ is finitely generated projective, so so is $\Sym^d(M^*)$ and also the tensor product $\Sym^d(M^*) \otimes N$.   Applying \eqref{pol.S} gives the claim.
\end{proof}

One can create new polynomial laws from old by twisting by a line bundle.

\begin{lem} \label{pol.twisting}
Let $M$ and $N$ be finitely generated projective $R$-modules.  Then for every $d \ge 0$ and 
every line bundle $L$, we have:
\begin{enumerate}
\item \label{pol.disc} There is a natural isomorphism $\Pol^d(M, N) \otimes (L^*)^{\otimes d} \to \Pol^d(M \otimes L, N)$.
\item \label{pol.twist} There is a natural isomorphism $\Pol^d(M, N) \cong \Pol^d(M \otimes L, N \otimes L^{\otimes d})$.
\end{enumerate}
\end{lem}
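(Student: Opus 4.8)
The plan is to reduce both isomorphisms to the already-established identification $\Pol^d(M,N) \cong \Sym^d(M^*) \otimes N$ from Lemma \ref{pol}\eqref{pol.S}, together with standard natural isomorphisms for symmetric powers, duals, and tensor products of finitely generated projective modules, and then check naturality. The point is that once everything is rewritten in terms of $\Sym^d$ and $\otimes$, both claims become formal identities between functors of $M$, $N$, and $L$.

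For part \eqref{pol.disc}, I would start from $\Pol^d(M \otimes L, N) \cong \Sym^d((M\otimes L)^*) \otimes N$ by \eqref{pol.S}. Since $L$ is a line bundle, $(M \otimes L)^* \cong M^* \otimes L^*$, and because $L^*$ is invertible there is a natural isomorphism $\Sym^d(M^* \otimes L^*) \cong \Sym^d(M^*) \otimes (L^*)^{\otimes d}$ — this is the standard fact that tensoring by a line bundle commutes with symmetric powers up to the appropriate twist (it holds for free modules by inspection of monomials and then globalizes by faithfully flat descent and the product-decomposition trick exactly as in the proof of Lemma \ref{pol}). Combining these and using \eqref{pol.S} again gives the chain
\[
\Pol^d(M\otimes L, N) \cong \Sym^d(M^*)\otimes (L^*)^{\otimes d} \otimes N \cong \Pol^d(M,N)\otimes (L^*)^{\otimes d},
\]
as desired. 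Concretely, the isomorphism sends a polynomial law $f$ on $M$, together with a local generator of $L^*$, to the law on $M\otimes L$ obtained by contracting the $L$-factor against the $L^*$-factors; I would spell this out just enough to make clear it is independent of the chosen generator and hence globalizes.

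Part \eqref{pol.twist} is then nearly immediate: apply part \eqref{pol.disc} with $N$ replaced by $N \otimes L^{\otimes d}$ to get
\[
\Pol^d(M\otimes L, N\otimes L^{\otimes d}) \cong \Pol^d(M, N\otimes L^{\otimes d}) \otimes (L^*)^{\otimes d},
\]
and now use \eqref{pol.Loos} (or equivalently \eqref{pol.S}) to pull the $N\otimes L^{\otimes d}$ out of $\Pol^d$, obtaining $\Pol^d(M,N)\otimes L^{\otimes d}\otimes (L^*)^{\otimes d}$, which is canonically $\Pol^d(M,N)$ since $L^{\otimes d}\otimes (L^*)^{\otimes d}\cong R$. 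Alternatively one can describe the map directly: $f \mapsto (x\otimes \ell \mapsto f_S(x)\otimes \ell^{\otimes d})$, well-defined because $f$ is homogeneous of degree $d$, so scaling $\ell$ by a unit scales the output by that unit to the $d$-th power on both sides.

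I expect the only real work to be the bookkeeping of \textbf{naturality} and the verification that the symmetric-power-twisting isomorphism $\Sym^d(M^*\otimes L^*)\cong \Sym^d(M^*)\otimes (L^*)^{\otimes d}$ is itself natural in all arguments; the underlying module isomorphisms are classical, but the paper wants the statements phrased as natural isomorphisms, so I would be careful to build every arrow as a composite of previously-named natural maps rather than choosing bases. The homogeneity hypothesis (degree exactly $d$) is what makes the twist land in $(L^*)^{\otimes d}$ rather than something messier, and I would flag that this is the place where the degree-$d$ condition is essential.
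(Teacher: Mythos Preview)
Your proposal is correct and follows essentially the same route as the paper: both parts are reduced to Lemma \ref{pol}\eqref{pol.S},\eqref{pol.Loos} together with the identification $\Sym^d(M^*\otimes L^*)\cong \Sym^d(M^*)\otimes (L^*)^{\otimes d}$ for a line bundle $L$ (the paper phrases this as $(L^*)^{\otimes d}\cong\Sym^d(L^*)$ and $\Sym^d(M^*)\otimes\Sym^d(L^*)\cong\Sym^d((M\otimes L)^*)$, which is the same thing), and part \eqref{pol.twist} is deduced from \eqref{pol.disc} exactly as you describe.
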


\begin{proof}
For \eqref{pol.disc}, since $L^*$ is a line bundle, the natural map $(L^*)^{\otimes d} \to \Sym^d(L^*)$ is an isomorphism because it is so after faithfully flat base change.  Since $\Sym^d(M^*) \otimes \Sym^d(L^*)$ is naturally identified with $\Sym^d((M \otimes L)^*)$, combining Lemma \ref{pol}\eqref{pol.S},\eqref{pol.Loos} then gives \eqref{pol.disc}.  

For \eqref{pol.twist}, there are isomorphisms $\Pol^d(M \otimes L, N \otimes L^{\otimes d}) \xrightarrow{\sim} \Pol^d(M, N) \otimes (L^*)^{\otimes d} \otimes L^{\otimes d}$ by  \eqref{pol.disc} and Lemma \ref{pol}\eqref{pol.Loos}.  Since $L^{\otimes d} \otimes (L^*)^{\otimes d} \cong R$, the claim follows.
\end{proof}

\begin{eg} \label{disc.module} (References: \cite[tag 03PK]{stacks-project}, \cite[\S2.4.3]{CalmesFasel}, \cite[\S{III.3}]{Knus:qf})
Suppose $L$ is a line bundle and there is an isomorphism $h \colon L^{\otimes d} \to R$ for some $d \ge 1$. We call such a pair $[L, h]$ a \emph{$d$-trivialized line bundle}.  (In the case $d = 2$ they are sometimes called \emph{discriminant modules}.)  Applying $h$ to identify $N \otimes L^{\otimes d} \xrightarrow{\sim} N$ in Lemma \ref{pol.twisting}\eqref{pol.twist} gives a construction that takes $f \in \Pol^d(M, N)$ and gives an element of $\Pol^d(M \otimes L, N)$, which we denote by $[L, h] \cdot (M, f)$.

For example, for each $\alpha \in R^\times$, define $\qform{\alpha}$ to be $[L, h]$ as in the preceding paragraph, where $L = R$ and $h$ is defined by $h(\ell_1 \otimes \cdots \otimes \ell_d) = \alpha \prod \ell_i$.  Clearly, $\qform{\alpha \beta^d} \cong \qform{\alpha}$ for all $\alpha, \beta \in \Rx$.  Applying the construction in the previous paragraph, we find $\qform{\alpha} \cdot (M, f) \cong (M, \alpha f)$.

Every $[L, h]$ with $L = R$ is necessarily isomorphic to $\qform{\alpha}$ for some $\alpha \in \Rx$.  In particular, if $\Pic(R)$ has no $d$-torsion elements other than zero --- e.g., if $R$ is a semilocal ring or a UFD \cite[tags 0BCH, 02M9]{stacks-project} --- then each $[L, h]$ is isomorphic to $\qform{\alpha}$ for some $\alpha$.

The group scheme $\mu_d$ of $d$-th roots of unity is the automorphism group of each $[L, h]$, where $\mu_d$ acts by multiplication on $L$.  The group $H^1(R, \mu_d)$ classifies pairs $[L, h]$ up to isomorphism.
\end{eg}

We say that homogeneous polynomial laws related by the isomorphism in Lemma \ref{pol.twisting}\eqref{pol.twist} are \emph{projectively similar}, imitating the language from \cite[\S1.2]{AuelBB} for the case of quadratic forms ($d = 2$).  (This relationship was called ``lax-similarity'' in \cite{BalmerCalmes}.)  We say that homogeneous degree $d$ laws $f$ and $[L, h] \cdot f$ for $[L, h] \in H^1(R, \mu_d)$ as in the preceding example are \emph{similar}.  If $\Pic(R)$ has no $d$-torsion elements other than zero, the two notions coincide.

For $f \in \Pol^d(M, N)$, we define $\Aut(f)$ to be the subgroup of $\GL(M)$ consisting of elements $g$ such that $fg = f$ as polynomial laws.  In case $M$ and $N$ are finitely generated projective, so is $\Pol^d(M, N)$, whence the functor $\bfAut(f)$ from $\Ralg$ to groups defined by $\bfAut(f)(T) = \Aut(f_T)$ is a closed sub-group-scheme of $\bfGL(M)$.

\begin{lem} \label{proj.sim}
Let $f$ and $f'$ be homogeneous polynomial laws on finitely generated projective modules.  If $f$ and $f'$ are projectively similar, then their automorphism groups are isomorphic.
\end{lem}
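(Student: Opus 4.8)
The plan is to unwind the definition of projective similarity and transport automorphisms along the resulting isomorphism. By Lemma \ref{pol.twisting}\eqref{pol.twist}, saying $f \in \Pol^d(M, N)$ and $f' \in \Pol^d(M', N')$ are projectively similar means there is a line bundle $L$, an isomorphism $\varphi \colon M' \xrightarrow{\sim} M \otimes L$ of $R$-modules, and an isomorphism $\psi \colon N' \xrightarrow{\sim} N \otimes L^{\otimes d}$, such that under the canonical identification $\Pol^d(M, N) \cong \Pol^d(M \otimes L, N \otimes L^{\otimes d})$ of that lemma, $f'$ corresponds to $f$; concretely, writing $\tilde f \in \Pol^d(M \otimes L, N \otimes L^{\otimes d})$ for the image of $f$, we have $\psi \circ f' = \tilde f \circ \varphi$ as polynomial laws $\W(M') \to \W(N \otimes L^{\otimes d})$.

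Next I would observe that the isomorphism $\varphi \colon M' \xrightarrow{\sim} M \otimes L$ induces a group isomorphism $\GL(M') \xrightarrow{\sim} \GL(M \otimes L)$ by $g \mapsto \varphi g \varphi^{-1}$, and similarly at the level of the group schemes $\bfGL(M') \cong \bfGL(M \otimes L)$, since all of this is functorial in the base ring. So it suffices to show this isomorphism carries $\Aut(f')$ onto $\Aut(\tilde f)$ and that $\Aut(\tilde f) = \Aut(f)$ under the canonical identification of modules coming from Lemma \ref{pol.twisting}\eqref{pol.twist}. For the first point: $g \in \Aut(f')$ iff $f' g = f'$, and applying the isomorphism $\psi$ (which is independent of $g$), this holds iff $\psi f' g = \psi f'$, i.e. $\tilde f \varphi g = \tilde f \varphi$, i.e. $\tilde f (\varphi g \varphi^{-1}) = \tilde f$, i.e. $\varphi g \varphi^{-1} \in \Aut(\tilde f)$. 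For the second point, the canonical isomorphism $\Pol^d(M, N) \cong \Pol^d(M \otimes L, N \otimes L^{\otimes d})$ from Lemma \ref{pol.twisting}\eqref{pol.twist} is built from the isomorphism $M \otimes L$ with itself "twisted by $L$" — tracing through its construction via \eqref{pol.disc}, the underlying module automorphism group of $M \otimes L$ acts compatibly, so $h \mapsto h$ sending $f \mapsto \tilde f$ identifies $\GL(M)$-stabilizers with $\GL(M \otimes L)$-stabilizers after the natural identification $\GL(M) \cong \GL(M \otimes L)$ coming from $-\otimes L$. Composing all these identifications yields $\bfAut(f) \cong \bfAut(f')$.

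The main obstacle I anticipate is purely bookkeeping: making precise the claim that the canonical isomorphism of Lemma \ref{pol.twisting}\eqref{pol.twist} is $\GL$-equivariant in the appropriate sense — that is, that the identification $\Pol^d(M, N) \cong \Pol^d(M \otimes L, N \otimes L^{\otimes d})$ intertwines the $\bfGL(M)$-action with the $\bfGL(M \otimes L)$-action under the group isomorphism induced by $- \otimes L$. This follows because the isomorphism in \eqref{pol.twist} is assembled from \eqref{pol.disc} and Lemma \ref{pol}\eqref{pol.Loos}, all of which are natural in $M$, hence functorial with respect to automorphisms of $M$; but writing it out cleanly requires care about where the line bundle twist interacts with the symmetric-power functor. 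Once that naturality is recorded, the rest is a formal diagram chase, and everything is stable under base change $R \to T$, so the statement upgrades automatically from an isomorphism of abstract groups to an isomorphism of the group schemes $\bfAut(f)$ and $\bfAut(f')$.
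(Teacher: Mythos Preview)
Your proposal is correct and follows essentially the same approach as the paper: both hinge on the observation that $\bfGL(M)$ acts on $M \otimes L$ (and hence on $\Sym^d((M\otimes L)^*)\otimes (N\otimes L^{\otimes d})$) by acting as the identity on the $L$-factor, and that this action carries $f$ to its image $\tilde f$ under the isomorphism of Lemma~\ref{pol.twisting}\eqref{pol.twist}. The only stylistic difference is that you argue one direction is an isomorphism by checking the naturality/equivariance of that isomorphism, whereas the paper simply constructs the inverse map by applying the same recipe with $L$ replaced by $L^*$ (viewing $M$ as $(M\otimes L)\otimes L^*$), which slightly streamlines the bookkeeping you flagged as the main obstacle.
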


\begin{proof}
By hypothesis, $f \in \Pol^d(M, N)$ and $f' \in \Pol^d(M \otimes L, N \otimes L^{\otimes d})$ for some modules $M$ and $N$; line bundle $L$; and $d \ge 0$.  The group scheme $\bfAut(f)$ is the closed sub-group-scheme of $\bfGL(M)$ stabilizing the element $f$ in $\Sym^d(M^*) \otimes N$.  Now, any element of $\bfGL(M)$ acts on $\Sym^d((M \otimes L)^*) \otimes (N \otimes L^{\otimes d})$ by defining it to act as the identity on $L$.  In this way, we find a homomorphism $\bfAut(f) \to \bfAut(f')$.  Viewing $M$ as $(M \otimes L) \otimes L^*$ and $N$ as $(N \otimes L^{\otimes d}) \otimes (L^*)^{\otimes d}$, and repeating this construction, we find an inverse mapping $\bfAut(f') \to \bfAut(f)$.
\end{proof}

%%%%%%%%%%%%%%%%%%%%%%%%%%%%%%%%%%%%%%%%%%%%%%%%%%%%%%%%%%
\section{Background on composition algebras} \label{comp.sec}

A not-necessarily-associative $R$-algebra $C$ is an $R$-module with an $R$-linear map $C \otimes_R C \to C$, which we view as a multiplication and write as juxtaposition.  Such a $C$ is \emph{unital} if it has an element $1_C \in C$ such that $1_C c = c 1_C = c$ for all $c \in C$.  See e.g.~\cite{Schfr}.  A \emph{composition $R$-algebra} as in \cite{Ptr:compzero} is such a $C$ that is finitely generated projective as an $R$-module, is unital, and has a quadratic form  $n_C \!: C \to R$ that 
allows composition (that is, such that $n_C(xy) = n_C(x)n_C(y)$ for all $x, y \in C$), satisfies $n_C(1_C) = 1$, and whose bilinearization defined by $n_C(x, y) := n_C(x+y) - n_C(x) - n_C(y)$ gives an isomorphism $C \to C^*$ via $x \mapsto n_C(x, \cdot )$.  We say that a symmetric bilinear form with this property is \emph{regular}.  The quadratic form $n_C$ (which is unique by Proposition \ref{UNINOR} below) is called the \emph{norm} of $C$.

\begin{rmk}
In the definition above, one can swap the condition  $n_C(1_C) = 1$ with the requirement that the rank of $C$ is nowhere zero.
\end{rmk}

We put $\Tr_C(x) := n_C(x, 1_C)$, a linear map $C \to R$, called the \emph{trace} of $C$.  Trivially, $\Tr_C(1_C) = 2$.  Lemma \ref{law.uni} gives that $1_C$ is unimodular, so we may identify $R$ with $R1_C$, and  $C$ is a faithful $R$-module.
The unimodularity of $1_C$ is equivalent to the existence of some $\lambda \in C^*$ such that $\lambda(1_C) = 1$, i.e., some $x \in C$ such that $\Tr_C(x) = 1$, whence $\Tr_C \colon C \to R$ is surjective.

The class of composition algebras is stable under base change.  That is, if $C$ is a composition $R$-algebra with norm $n_C$, then for every $S \in \Ralg$, $C \otimes S$ is a composition $S$-algebra with norm $n_C \otimes S$. The following two results are essentially well known \cite[1.2$\--$1.4]{Ptr:compzero}. For convenience, we include their proof.

\begin{lem}[``Cayley-Hamilton'']  \label{PRECOAL}
Let $C$ be a composition algebra with norm $n_C$ and define $\Tr_C$ as above.  Then
\[
x^2 - \Tr_C(x) x + n_C(x) 1_C = 0
\]
for all $x \in C$.
\end{lem}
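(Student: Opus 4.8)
The plan is to reduce the identity to a statement about polynomial laws and then verify it in a setting where direct computation is easy. Both sides of the claimed equation $x^2 - \Tr_C(x)x + n_C(x)1_C = 0$ are values of polynomial laws on $C$: indeed $x \mapsto x^2$ is a homogeneous degree-$2$ polynomial law $\W(C) \to \W(C)$, the map $x \mapsto \Tr_C(x)x$ is homogeneous of degree $2$ (being linear times linear, or rather degree-$1$ times degree-$1$), and $x \mapsto n_C(x)1_C$ is homogeneous of degree $2$. So the difference is a homogeneous degree-$2$ polynomial law $P \colon \W(C) \to \W(C)$, and it suffices to show $P = 0$.

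To show $P$ vanishes I would invoke Lemma \ref{law.dense} with $A = \End_R(C)$ (or, more simply, use the fact recorded just before Lemma \ref{law.uni} that a polynomial law is zero once it vanishes over all local rings). The cleanest route: since the hypotheses (being a composition algebra with a given norm form) are stable under base change, and since over a local ring a composition algebra becomes a ``standard'' one — by the usual structure theory it is $R$, or a quadratic \'etale algebra, or a quaternion algebra, or an octonion algebra, each realized via a Cayley--Dickson type construction with an explicit conjugation $\bar{x} = \Tr_C(x)1_C - x$ — one checks the identity there by the defining relations of that construction. Concretely, $x\bar{x} = n_C(x)1_C$ holds by definition of the norm in the Cayley--Dickson doubling, and $x + \bar{x} = \Tr_C(x)1_C$ by definition of conjugation, so $x^2 = x(\Tr_C(x)1_C - \bar{x}) = \Tr_C(x)x - n_C(x)1_C$. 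Because $P$ is a polynomial law and vanishes on every value in every such local base change, $P_S = 0$ for every $S$, giving the identity over the original $R$.

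Alternatively, and perhaps more in the spirit of the paper's ``local computations over an algebraically closed field'' philosophy, one can first reduce to $R$ being a field — even algebraically closed — by a faithfully flat descent argument: the difference $P$ lies in $\Pol^2(C, C)$, which by Lemma \ref{pol} is a finitely generated projective module whose formation commutes with faithfully flat base change, so $P = 0$ can be tested after passing to a faithfully flat cover, in particular to the various residue fields and their algebraic closures, where $C$ splits as a matrix/split-octonion algebra and the Cayley--Hamilton identity is classical (or a direct $2$-by-$2$ matrix computation).

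The main obstacle is marshalling the classification/structure theory of composition algebras over a local ring — one needs to know that locally every composition algebra arises from an explicit Cayley--Dickson construction equipped with its standard involution, which is exactly the content referenced as \cite[1.2--1.4]{Ptr:compzero}. Once that normal form is in hand the identity is immediate from the definitions of norm and conjugation; the only subtlety is the bookkeeping that lets one pass from ``holds locally after base change'' back to ``holds as a polynomial law over $R$,'' which is handled uniformly by the remarks on polynomial laws preceding Lemma \ref{law.uni} together with Lemma \ref{pol}\eqref{pol.base}.
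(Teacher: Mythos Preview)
Your approach has a circularity problem. The structure theory you invoke --- that over a local ring (or field) every composition algebra arises from an explicit Cayley--Dickson construction with its standard involution $\bar{x}$ satisfying $x\bar{x} = n_C(x)1_C$ --- is, in every standard development, built \emph{on} the Cayley--Hamilton identity you are trying to prove. One first establishes $x^2 - \Tr_C(x)x + n_C(x)1_C = 0$, then \emph{defines} $\bar{x} := \Tr_C(x)1_C - x$ and checks from Cayley--Hamilton that this is an involution with $x\bar{x} = n_C(x)1_C$, and only afterwards uses the involution to run the doubling/classification argument. So appealing to the classification to deduce Cayley--Hamilton reverses the logical order. (Note also that the citation \cite[1.2--1.4]{Ptr:compzero} in the paper refers to proofs of \emph{this lemma and the next proposition}, not to a structure theorem; and in the paper itself the involution $\bar{x}$ is introduced only \emph{after} the present lemma.)

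The paper's proof avoids all of this and is far more direct: it linearizes the composition law $n_C(xy) = n_C(x)n_C(y)$ to obtain
\[
n_C(xy,x) = n_C(x)\Tr_C(y) \quad\text{and}\quad n_C(xy,wz) + n_C(wy,xz) = n_C(x,w)\,n_C(y,z),
\]
specializes the second identity at $z = x$, $w = 1_C$, and combines the two to get $n_C\bigl(x^2 - \Tr_C(x)x + n_C(x)1_C,\, y\bigr) = 0$ for all $y$. Regularity of the bilinear form then gives the result. No base change, no polynomial-law machinery, no classification --- just the axioms.
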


\begin{proof}
Linearizing the composition law $n_C(xy) = n_C(x) n_C(y)$, we find
\begin{gather} 
n_C(xy, x) = n_C(x) \Tr_C(y)  \quad \text{and} \label{PRECOAL.1} \\
 n_C(xy, wz) + n_C(wy, xz) = n_C(x,w) n_C(y,z) \label{PRECOAL.2}
\end{gather}
for all $x, y, z, w \in C$.  
%Setting $y = x$ and $w = z = 1_C$ in \eqref{PRECOAL.2}, we find:
%\[
%\Tr_C(x^2) + 2 n_C(x) = \Tr_C(x)^2.
%\]
Setting $z = x$ and $w = 1_C$ in \eqref{PRECOAL.2}, we find:
\[
n_C(xy, x) + n_C(y, x^2) = \Tr_C(x) n_C(x, y).
\]
Combining these with \eqref{PRECOAL.1}, we find:
\[
n_C(x^2 - \Tr_C(x) x + n_C(x) 1_C, y) = 0 \quad \text{for all $x, y \in C$.}
\]
Since the bilinear form $n_C$ is regular, the claim follows.
\end{proof}

A priori, a composition algebra is a unital algebra together with a quadratic form, the norm.  The next result shows that this data is redundant.

\begin{prop} \label{UNINOR}
If $C$ is a composition algebra, then the norm $n_C$ is uniquely determined by the algebra structure of $C$.
\end{prop}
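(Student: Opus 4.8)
The plan is to show that the norm is recovered from the $U$-operator / algebra structure via the Cayley–Hamilton identity of Lemma \ref{PRECOAL}, using the regularity of the trace form. Concretely, suppose $n$ and $n'$ are two quadratic forms on $C$ each exhibiting $C$ as a composition algebra, with associated traces $\Tr$ and $\Tr'$. By Lemma \ref{PRECOAL} applied to each, we get $x^2 - \Tr(x)x + n(x)1_C = 0$ and $x^2 - \Tr'(x)x + n'(x)1_C = 0$ for all $x$, hence
\[
(\Tr(x) - \Tr'(x))\,x = (n(x) - n'(x))\,1_C
\]
for all $x \in C$, and the same identity holds after every base change $S \in \Ralg$ since composition algebras and this identity are stable under base change. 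So it suffices to prove $\Tr = \Tr'$ as linear forms; then the displayed equation and the unimodularity of $1_C$ (so that $c \mapsto c\,1_C$ is injective, $R$ being faithful) force $n = n'$.

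To prove $\Tr = \Tr'$, I would pair the displayed identity against a linear functional. First reduce to the case $S = R$: it is enough to show $\Tr_S(m) = \Tr'_S(m)$ for all $m \in C\otimes S$, all $S$, so relabel and just prove $\Tr(x) = \Tr'(x)$ for all $x \in C$. Pick, using unimodularity of $1_C$, some $\lambda \in C^*$ with $\lambda(1_C) = 1$. Applying $\lambda$ to $(\Tr(x) - \Tr'(x))x = (n(x) - n'(x))1_C$ gives
\[
(\Tr(x) - \Tr'(x))\,\lambda(x) = n(x) - n'(x)
\]
for all $x$. Now substitute $x \mapsto x + t\cdot 1_C$ over $R[t]$ and compare coefficients of $t$: the right side has $t$-coefficient $n(x,1_C) - n'(x,1_C) = \Tr(x) - \Tr'(x)$, while the left side, being a product of two $R[t]$-valued laws whose constant terms in $t$ are $(\Tr(x)-\Tr'(x))$ and $\lambda(x)$ and whose linear terms are $(\Tr(1_C)-\Tr'(1_C)) = 0$ (both traces send $1_C$ to $2$) and $\lambda(1_C) = 1$ respectively, has $t$-coefficient $(\Tr(x)-\Tr'(x))\cdot 1 + 0 \cdot \lambda(x) = \Tr(x)-\Tr'(x)$. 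That gives a tautology, so this particular manipulation is not yet enough and needs to be pushed one step further.

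The clean way is instead to bilinearize and use regularity directly. Set $\delta(x) := \Tr(x) - \Tr'(x)$, a linear form, and $q(x) := n(x) - n'(x)$, a quadratic form; the identity is $\delta(x)x = q(x)1_C$. Applying the linear map $\Tr_C(1_C,\cdot)$-style pairing is circular, so instead linearize the identity in $x$ (replace $x$ by $x+y$, subtract): $\delta(x)y + \delta(y)x = n(x,y)1_C - n'(x,y)1_C$. Pair this with $\lambda$: $\delta(x)\lambda(y) + \delta(y)\lambda(x) = n(x,y) - n'(x,y)$. Put $y = 1_C$: since $\delta(1_C) = 0$ and $\lambda(1_C)=1$, the left side is $\delta(x)$, so $\delta(x) = n(x,1_C) - n'(x,1_C) = \Tr(x)-\Tr'(x) = \delta(x)$ — again tautological, which signals that the trace relation alone is not extractable from Cayley–Hamilton plus unimodularity and that regularity of $n_C$ must enter. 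So the real argument: from $\delta(x)y + \delta(y)x = (n-n')(x,y)1_C$, apply the regular bilinear form $n$ (say $n = n_C$ for one of the two structures, call it $B$) in the second slot against a fixed $z$: $\delta(x)B(y,z) + \delta(y)B(x,z) = (n-n')(x,y)B(1_C,z) = (n-n')(x,y)\Tr(z)$. Because $B$ is regular, choosing $z$ appropriately lets us solve for $\delta$; more precisely, the map $y \mapsto B(y,\cdot)$ is an isomorphism $C \to C^*$, so the relation $\delta(x)\,B(y,\cdot) + \delta(y)\,B(x,\cdot) = (n-n')(x,y)\,B(1_C,\cdot)$ in $C^*$ can be transported back to $C$; evaluating at $x$ not in the radical considerations and using that $C$ is faithful with $1_C$ unimodular pins down $\delta \equiv 0$, hence $\Tr = \Tr'$, hence $n = n'$.

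The main obstacle, as the false starts above show, is that Cayley–Hamilton plus unimodularity of $1_C$ by themselves only yield the tautology $\delta = \delta$; the essential input that must be used is the \emph{regularity} of the norm form (one of the two, to fix ideas). I expect the cleanest route is: (i) get $\delta(x)x = q(x)1_C$ from Lemma \ref{PRECOAL}; (ii) linearize to $\delta(x)y + \delta(y)x = q(x,y)1_C$; (iii) apply the regular form $n_C$ of one of the structures in one slot to convert this into an identity among linear functionals; (iv) use that $x \mapsto n_C(x,\cdot)$ is an isomorphism to conclude $\delta \equiv 0$. Stability under base change is needed only to reduce all statements to the level of $R$ itself, which is free; no faithfully flat descent is required here beyond that reduction.
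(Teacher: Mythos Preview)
Your setup is correct and matches the paper: from Lemma~\ref{PRECOAL} applied to both norms you get $\delta(x)\,x = q(x)\,1_C$ with $\delta := \Tr_C - \Tr'$ and $q := n_C - n'$, and it suffices to show $\delta = 0$. But from there your argument never closes. You yourself flag that the substitutions $x \mapsto x + t\,1_C$ and the linearization paired against a functional both collapse to the tautology $\delta(x) = \delta(x)$, and your ``real argument'' via regularity is not carried out: the sentence ``evaluating at $x$ not in the radical considerations \ldots pins down $\delta \equiv 0$'' is a placeholder, not a proof. In fact the relation $\delta(x)n_C(y,z) + \delta(y)n_C(x,z) = q(x,y)\Tr_C(z)$ you derive is symmetric in the same way the original identity is, and regularity of $n_C$ by itself does not let you strip off the $\delta$'s over a general ring (over a field a rank count would do it, but over $R$ the coefficient $\delta(x)$ could be a nonzero nonunit).

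The missing idea, which the paper supplies, is purely module-theoretic and does not invoke regularity again. Since $\delta$ is linear it suffices to check $\delta = 0$ locally, so assume $R$ is local; then $1_C$ unimodular gives a splitting $C = R1_C \oplus M$ with $M$ free. One has $\delta(1_C) = 2 - 2 = 0$. For a basis vector $m \in M$, the identity $\delta(m)\,m = q(m)\,1_C$ says $\delta(m)\,m \in R1_C \cap M = 0$; since $m$ is part of a basis of a free module, $\delta(m) = 0$. Hence $\delta$ vanishes on a basis of $C$, so $\delta = 0$. Your diagnosis that ``regularity must enter'' beyond Cayley--Hamilton is therefore off the mark: what is needed is the direct-sum decomposition coming from unimodularity of $1_C$, together with a localization to make the complement free.
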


\begin{proof}
Let $n' \colon C \to R$ be any quadratic form making $C$ a composition algebra and write $\Tr'$ for the corresponding trace $\Tr'(x) := n'(x + 1_C) - n'(x) - n'(1_C)$.  Then $\lambda := \Tr_C - \Tr'$ (resp., $q := n_C - n'$) is a linear (resp., quadratic) form on $C$ and the Cayley-Hamilton property yields
\begin{equation} \label{DITRNO}
\lambda(x) x = q(x) 1_C \quad \text{for all $x \in C$.}
\end{equation}
We aim to prove that $q = 0$.  Because $1_C$ is unimodular, it suffices to prove $\lambda = 0$.  This can be checked locally, so we may assume that $R$ is local and in particular $C = R1_C \oplus M$ for a free module $M$.  Now, $\Tr_C(1_C) = 2 = \Tr'(1_C)$, so $\lambda(1_C) = 0$.  For $m \in M$ a basis vector, 
 $\lambda(m)m$ belongs to $M \cap R1_C$ by \eqref{DITRNO}, so it is zero, whence $\lambda(m) = 0$, proving the claim.
\end{proof}

\begin{cor}
Let $C$ be a unital $R$-algebra.  If there is a faithfully flat $S \in \Ralg$ such that $C \otimes S$ is a composition $S$-algebra, then $C$ is a composition algebra over $R$.
\end{cor}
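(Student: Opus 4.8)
The plan is to descend the norm from $S$ down to $R$, exploiting that by Proposition~\ref{UNINOR} the norm of a composition algebra is canonically attached to its algebra structure; that canonicity is precisely what forces $n_{C\otimes S}$ to glue to a form on $C$. Once that form is in hand, each composition-algebra axiom for $C$ can be checked after the faithfully flat base change $R\to S$, where it holds by hypothesis. First I would record that $C$ is finitely generated projective over $R$: being a composition algebra includes this property, so $C\otimes S$ is finitely generated projective over $S$, and finite projectivity descends along the faithfully flat map $R\to S$ (check finite generation, then flatness, then finite presentation faithfully-flat-locally). Hence Lemma~\ref{pol} applies with $M=C$, $N=R$: $\Pol^2_R(C,R)$ is finitely generated projective by~\eqref{pol.proj}, and $\Pol^2_R(C,R)\otimes T\xrightarrow{\sim}\Pol^2_T(C\otimes T,T)$ for every faithfully flat $T\in\Ralg$ by~\eqref{pol.base}.

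Next, descend $n_{C\otimes S}$. Viewing it as an element of $\Pol^2_R(C,R)\otimes S$ via the isomorphism of~\eqref{pol.base}, faithfully flat descent for the module $\Pol^2_R(C,R)$ says it comes from a (necessarily unique) element $n_C\in\Pol^2_R(C,R)$ with $n_C\otimes S=n_{C\otimes S}$ as soon as its two images in $\Pol^2_R(C,R)\otimes(S\otimes_R S)\cong\Pol^2_{S\otimes_R S}(C\otimes(S\otimes_R S),\,S\otimes_R S)$ agree. Those two images are the base changes of $n_{C\otimes S}$ along the two structure maps $S\rightrightarrows S\otimes_R S$, i.e.\ the norms of the two $(S\otimes_R S)$-algebras obtained from the composition $S$-algebra $C\otimes S$ by these base changes; but both base changes produce the very same $(S\otimes_R S)$-algebra $C\otimes(S\otimes_R S)$, so by the uniqueness in Proposition~\ref{UNINOR} the two norms, hence the two images, coincide.

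It remains to verify the composition-algebra axioms for $(C,n_C)$. Since $R\to S$ is injective (faithful flatness), $n_C(1_C)=1$ follows from $n_C(1_C)\otimes 1=n_{C\otimes S}(1_{C\otimes S})=1$ in $R\otimes S$. That $n_C$ allows composition is the vanishing of the polynomial law $(x,y)\mapsto n_C(xy)-n_C(x)n_C(y)$, which is homogeneous of degree $4$ on $C\oplus C$ and so lies in the finitely generated projective module $\Pol^4_R(C\oplus C,R)$; applying $\otimes S$ turns it into the corresponding law for $C\otimes S$, which vanishes, so by~\eqref{pol.base} it vanishes over $R$. For regularity, the $R$-linear map $C\to C^*$, $x\mapsto n_C(x,\cdot\,)$, becomes after $\otimes S$ the analogous map for $C\otimes S$ (using $n_C\otimes S=n_{C\otimes S}$ and, since $C$ is finitely generated projective, $C^*\otimes S\cong(C\otimes S)^*$), which is an isomorphism; and a homomorphism of $R$-modules that becomes an isomorphism after faithfully flat base change is itself an isomorphism.

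I expect the crux to be the verification of the descent datum in the second step: a direct hand computation comparing the two pullbacks of $n_{C\otimes S}$ would be awkward, and the whole point is that Proposition~\ref{UNINOR} makes it automatic---uniqueness of the norm forces the cocycle condition. A lesser subtlety is to handle ``allows composition'' as an identity of polynomial laws (so that~\eqref{pol.base} can be invoked) rather than merely as an identity of values in $R$, which would be circularly tied to base-change stability of composition algebras.
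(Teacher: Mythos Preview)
Your proof is correct and follows essentially the same approach as the paper's: use the uniqueness of the norm (Proposition~\ref{UNINOR}) to descend $n_{C\otimes S}$ to a quadratic form $n_C$ on $C$, then verify the composition-algebra axioms by faithfully flat descent. The paper's proof is a terse two-sentence version of exactly this argument; you have simply filled in the details (descent of finite projectivity, the explicit cocycle check via Proposition~\ref{UNINOR}, and axiom-by-axiom verification using Lemma~\ref{pol}\eqref{pol.base}).
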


\begin{proof}
Because the norm $n_{C \otimes S}$ of $C \otimes S$ is uniquely determined by the algebra structure, one obtains by faithfully flat descent a quadratic form $n_C \colon C \to R$ such that $n_C \otimes S = n_{C \otimes S}$.  Because $n_{C \otimes S}$ satisfies the properties required to make $C \otimes S$ a composition algebra and $S$ is faithfully flat over $R$, it follows that the same properties hold for $n_C$.
\end{proof}

The following facts are standard, see for example \cite[\S{V.7}]{Knus:qf}:  Composition algebras are alternative algebras.  The map $\bar{\ } \!: C \to C$ defined by $\overline{x} := \Tr_C(x)1_C - x$ is an involution, i.e., an $R$-linear anti-automorphism of period 2.
 
\subsection*{Composition algebras of constant rank} 
In case $R$ is connected, a composition $R$-algebra has rank $2^e$ for $e \in \{ 0, 1, 2, 3 \}$ \cite[p.~206, Th.~V.7.1.6]{Knus:qf}.  Therefore,
specifying a composition $R$-algebra $C$ is equivalent to writing 
\begin{equation} \label{decomp}
R = \prod_{e=0}^3 R_e \quad \text{and} \quad C = \prod_{e=0}^3 C_e,
\end{equation}
where $C_e$ is a composition $R_e$-algebra of constant rank $2^e$. 

If $C$ is a composition algebra of rank 1, then since $1_C$ is unimodular, $C$ is equal to $R$.  The bilinear form $n_C(\cdot, \cdot)$ gives an isomorphism $C \to C^*$ and $n_C(1_C, \alpha 1_C) = 2\alpha$, we deduce that $2$ is invertible in $R$.  Conversely, if 2 is invertible, then $R$ is a composition algebra by setting $n_C(\alpha) = \alpha^2$.

A composition algebra whose rank is 2 is not just an associative and commutative ring, it is an \'etale algebra \cite[p.~43, Th.~I.7.3.6]{Knus:qf}.  Conversely, every rank 2 \'etale algebra is a composition algebra.  Among rank 2 \'etale algebras, there is a distinguished one, $R \times R$, which is said to be split.

A composition algebra whose rank is 4 is associative and is an Azumaya algebra, commonly known as a \emph{quaternion algebra}.   (Note that our notion of quaternion algebra is more restrictive than the one in the books \cite[see p.~43]{Knus:qf} and \cite{Voight}.)  Among quaternion $R$-algebras, there is a distinguished one, the 2-by-2 matrices $\Mat_2(R)$, which is said to be split.  

A composition algebras whose rank is 8 is known as an \emph{octonion algebra}.  Among octonion $R$-algebras, there is a distinguished one that is said to be split, called the Zorn vector matrices and denoted $\Zor(R)$, see \cite[4.2]{LoosPR}.  As a module, we view it as $\stbtmat{R}{R^3}{R^3}{R}$ with multiplication
\[
\stbtmat{\alpha_1}{u}{x}{\alpha_2} \stbtmat{\beta_1}{v}{y}{\beta_2} = \stbtmat{\alpha_1 \beta_1 - u^\trans y}%
{\alpha_1 v + \beta_2 u + x \times y}{\beta_1 x + \alpha_2 y + u \times v}{-x^\trans v + \alpha_2 \beta_2}
\]
where $\times$ is the ordinary cross product on $R^3$.  The quadratic form is 
\[
n_{\Zor(R)}\stbtmat{\alpha_1}{u}{x}{\alpha_2} = \alpha_1 \alpha_2 + u^\trans x.
\]

One says that a composition $R$-algebra $C$ is \emph{split} if, when we write $R$ and $C$ as in \eqref{decomp}, $C_e$ is isomorphic to the split composition $R_e$-algebra for $e \ge 1$.

\begin{eg} \label{octaves}
The real octonions $\OO$ are a composition $\RR$-algebra with  basis $1_{\OO}$, $e_1$, $e_2$, $\ldots$, $e_7$ which is orthonormal with respect to the quadratic form $n_\OO$ with multiplication table 
\[
e_r^2 = -1 \quad \text{and} \quad e_r e_{r+1} e_{r+3} = -1
\]
for all $r$ with subscripts taken modulo 7, and the displayed triple product is associative.

The $\ZZ$-sublattice $\oct$ of $\OO$ spanned by $1_\OO$, the $e_r$, and 
\begin{gather*}
h_1 = (1 + e_1 + e_2 + e_4)/2, \quad h_2 = (1 + e_1 + e_3 + e_7)/2, \\
h_3 = (1 + e_1 + e_5 + e_6)/2, \quad \text{and} \quad h_4 = (e_1 + e_2 + e_3 + e_5)/2
\end{gather*}
is a composition $\ZZ$-algebra.  It is a maximal order in $\oct \otimes \QQ$, and all such are conjugate under the automorphism group of $\oct \otimes \QQ$.  (As a consequence, there is some choice in the way one presents this algebra.  We have followed \cite{ElkiesGross}.)  As a subring of $\OO$, it has no zero divisors.  For more on this, see \cite[\S19]{Dickson:hyper}, \cite{Coxeter:cayley}, \cite[\S9]{ConwaySmith}, or \cite[\S5]{Conrad:Z}.  The non-uniqueness of this choice of maximal order and its relationship to other orders like $\ZZ \oplus \ZZ e_1 \oplus \cdots \oplus \ZZ e_7$ can be understood in terms of the Bruhat-Tits building of the group $\Aut(\Zor(\QQ_2))$ of type $\gee_2$ over the 2-adic numbers, compare \cite[\S9]{GanYu:G2}.
\end{eg}

%%%%%%%%%%%%%%%%%%%%%%%%%%%%%%%%%%%%%%%%%%%%%%%%%%%%%%%%%%
\section{Background on Jordan algebras} \label{jord.sec}

\subsection*{Para-quadratic and Jordan algebras} A (unital) \emph{para-quadratic algebra} over a ring $R$ is an $R$-module $J$ together with a quadratic map $U \colon J \to \End_R(J)$ --- i.e., $U$ is an element of $\Pol^2(J, \End_R(J))$ --- called the \emph{$U$-operator}, and a distinguished element $1_J \in J$, such that $U_{1_J} = \Id_J$.
As a notational convenience, we define a linear map $J \otimes J \otimes J \to J$ denoted $x \otimes y \otimes z \mapsto \{ xyz\}$ via 
\begin{equation} \label{brace.def}
\{ x y z \} := (U_{x+z} - U_x - U_z)y.
\end{equation}
Evidently, $\{ x y z \} = \{ z y x \}$ for all $x, y, z \in J$.

A \emph{homomorphism} $\phi \colon J \to J'$ of para-quadratic $R$-algebras is an $R$-linear map such that $\phi(1_J) = 1_{J'}$ and $U'_{\phi(x)} \phi(y) = \phi(U_x y)$ for all $x, y \in J$, where $U'$ denotes the $U$-operator in $J'$.

A para-quadratic $R$-algebra $J$ is a \emph{Jordan $R$-algebra}  if the identities
\begin{equation} \label{jord.id}
U_{U_x y} = U_x U_y U_x \quad \text{and}  \quad
U_x \{ yxz \} = \{ (U_xy) z x \}
\end{equation}
hold for all $x, y, z \in J\otimes S$ for all $S \in \alg{R}$.  (Alternatively, one can define a Jordan $R$-algebra entirely in terms of identities concerning elements of $J$, avoiding the ``for all $S \in \alg{R}$'', at the cost of requiring a longer list of identities, see \cite[\S1]{McC:NAS}.)   Note that if $J$ is a Jordan $R$-algebra, then $J \otimes T$ is a Jordan $T$-algebra for every $T \in \Ralg$ (``Jordan algebras are closed under base change'').  If $J$ is a para-quadratic algebra and $J \otimes T$ is Jordan for some faithfully flat $T \in \Ralg$, then $J$ is Jordan.

 For $x$ in a Jordan algebra $J$ and $n \ge 0$, we define the $n$-th power $x^n$ via
\begin{equation} \label{jord.pow}
x^0 := 1_J, \quad x^1 := x, \quad x^n = U_x x^{n-2} \ \text{for $n \ge 2$.}
\end{equation}
An element $x \in J$ is \emph{invertible with inverse $y$} if $U_x y = x$ and $U_x y^2 = 1$ \cite[\S5]{McC:NAS}.   It turns out that $x$ is invertible if and only if $U_x$ is invertible if and only if $1$ is in the image of $U_x$; when these hold, the inverse of $x$ is $y = U_x^{-1} x$, which we denote by $x^{-1}$. It follows from \eqref{jord.id} that $x,y \in J$ are both invertible if and only if $U_xy$ is invertible, and in this case $(U_xy)^{-1} = U_{x^{-1}}y^{-1}$.

\begin{eg} \label{special.jord}
Let $A$ be an associative and unital $R$-algebra.  Define $U_xy := xyx$ for $x, y \in A$.  Then $\{ xyz\} = xyz + zyx$ and $A$ endowed with this $U$-operator is a Jordan algebra denoted by $A^+$.  Note that for $x \in A$ and $n \ge 0$, the $n$-th powers of $x$ in $A$ and $A^+$ are the same.
\end{eg}

\subsection*{Relations with other kinds of algebras}
Suppose for this paragraph that 2 is invertible in $R$.  Given a para-quadratic algebra $J$ as in the preceding paragraph, one can define a commutative (bilinear) product $\jprod$ on $J$ via
\begin{equation} \label{jprod.def}
x \jprod y := \frac12 \{xy1_J \} \quad \text{for $x, y \in J$.}
\end{equation}
(In the case where $J$ is constructed from an associative algebra as in Example \ref{special.jord}, one finds that $x \bullet y = \frac12 (xy + yx)$.  If additionally the associative algebra is commutative, $\bullet$ equals the product in that associative algebra.)
If $J$ is Jordan, then $\jprod$ satisfies  
\begin{equation} \label{jid.old}
(x \bullet y) \bullet (x \bullet x) = x \bullet (y \bullet (x \bullet x)),
\end{equation}
which is the axiom classically called the ``Jordan identity''.  

In the opposite direction, given an $R$-module $J$ with a commutative product $\jprod$ with identity element $1_J$, we obtain a para-quadratic algebra by setting
\begin{equation} \label{Uop.def}
U_x y := 2x \jprod (x \jprod y) - (x \jprod x) \jprod y \quad \text{for $x, y \in J$}.
\end{equation}
If the original product satisfied the Jordan identity, then the para-quadratic algebra so obtained satisfies \eqref{jord.id}, i.e., is a Jordan algebra in our sense, see for example %\cite[p.~202]{McC} or 
\cite[\S1.4]{Jac:Tata}.

\begin{defn}[hermitian matrix algebras] \label{herm.lin}
Let $C$ be a  composition $R$-algebra and $\Gamma = \qform{\gamma_1, \gamma_2, \gamma_3} \in \GL_3(R)$.  We define $\Her_3(C, \Gamma)$ to be the $R$-submodule of $\Mat_3(C)$ consisting of elements fixed by the involution $x \mapsto \Gamma^{-1} \bar{x}^\trans \Gamma$ and with diagonal entries in $R$.
Note that, as an $R$-module, $\Her_3(C, \Gamma)$ is a sum of 3 copies of $C$ and 3 copies of $R$, so it is finitely generated projective. 

In the special case where 2 is invertible in $R$, one can define a multiplication $\jprod$ on $\Her_3(C, \Gamma)$ via $x \jprod y := \frac12 (xy + yx)$, where juxtaposition denotes the usual product of matrices in $\Mat_3(C)$.  It satisfies the Jordan identity \cite[p.~61, Cor.]{Jac:J}, and therefore the $U$-operator defined via \eqref{Uop.def} makes $\Her_3(C, \Gamma)$ into a Jordan algebra.
\end{defn}

%Writing out $U_x \eps_i$ and $U_x \delta_i^\Gamma(d)$ for $d \in C$ and $x$ as in \eqref{basJord}, one finds expressions that do not involve any denominators, and so make sense over an arbitrary ring $R$.  In this way, $\Her_3(C, \Gamma)$ is a para-quadratic $R$-algebra, whether or not 2 is invertible in $R$.  
%Note that, if we multiply $\Gamma$ by an element of $R^\times$ or any entry in $\Gamma$ by the square of an element of $R^\times$, we obtain an algebra isomorphic to the original.  

%%%%%%%%%%%%%%%%%%%%%%%%%%%%%%%%%%%%%%%%%%%%%%%%%%%%%%%%%%
\section{Cubic Jordan algebras}

In this section, we define cubic Jordan algebras and the closely related notion of cubic norm structure.  They provide a useful alternative language for computation. 

\begin{defn} \label{cubjord.def}
Following \cite{McC:FST} (see \cite[p.~212]{PR:jord3} for the terminology), we define a \emph{cubic norm $R$-structure} as a quadruple $\bfM = (M,1_\bfM,\sharp,N_\bfM)$ consisting of an $R$-module $M$, a distinguished element $1_\bfM \in M$ (the \emph{base point}), a quadratic map $\sharp\colon M \to M$, $x \mapsto x^\sharp$ (the \emph{adjoint}), with (symmetric bilinear) polarization $x \times y := (x + y)^\sharp - x^\sharp - y^\sharp$, a cubic form $N_\bfM\colon M \to R$ (the \emph{norm}) such that the following axioms are fulfilled. Defining a bilinear form $T_\bfM\colon M \times M \to R$ by
\begin{equation}
\label{bilt} T_\bfM(x,y) := (\grd_xN_\bfM)(1_\bfM)(\grd_yN_\bfM)(1_\bfM) - (\grd_x\grd_yN_\bfM)(1_\bfM)
\end{equation}
(the \emph{bilinear trace}), which is symmetric since the directional derivatives $\grd_x$, $\grd_y$ commute \cite[p.~241, Prop.~II.5]{Roby}, and a linear form $\Tr_\bfM\colon M \to R$ by 
\begin{equation}
\label{lit} \Tr_\bfM(x) := T_\bfM(x,1_\bfM)
\end{equation}
(the \emph{linear trace}), the identities
\begin{gather}
\label{cunun} 1_\bfM^\sharp = 1_\bfM, \quad N_\bfM(1_\bfM) = 1,\\
\label{cubad} 1_\bfM \times x = \Tr_\bfM(x)1_\bfM - x,\; (\grd_yN_\bfM)(x) = T_\bfM(x^\sharp,y),\:x^{\sharp\sharp} = N_\bfM(x)x
\end{gather}
hold in all scalar extensions $M \otimes S$, $S\in \alg{R}$. 

For such a cubic norm structure $\bfM$, we then define a $U$-operator by
\begin{equation}
\label{uop.cub} U_xy := T_\bfM(x,y)x - x^\sharp \times y,
\end{equation}
which together with $1_\bfM$ converts the $R$-module $M$ into a Jordan $R$-algebra $J = J(\bfM)$ \cite[Th.~1]{McC:FST}. In the sequel, we rarely distinguish carefully between the cubic norm structure $\bfM$ and the Jordan algebra $J(\bfM)$. By abuse of notation, we write $1_J = 1_\bfM$, $N_J = N_\bfM$, $T_J = T_\bfM$, and $\Tr_J:= \Tr_\bfM$ if there is no danger of confusion, even though, in general, $J$ does not determine $\bfM$ uniquely \cite[p.~216]{PR:jord3}.

A Jordan $R$-algebra $J$ is said to be \emph{cubic} if there exists a cubic norm $R$-structure $\bfM$ as above such that (i) $J = J(\bfM)$ and (ii) $J = M$ is a finitely generated projective $R$-module. With the quadratic form $S_J\colon M \to R$ defined by $S_J(x) := \Tr_J(x^\sharp)$ for $x \in J$ (the \emph{quadratic trace}), the cubic Jordan algebra $J$ satisfies the identities
\begin{gather}
\label{sharp.norm} (U_xy)^\sharp = U_{x^\sharp}y^\sharp, \quad N_J(U_xy)U_xy = N_J(x)^2N_J(y)U_xy, \\
\label{inv.cub} U_xx^\sharp = N_J(x)x, \quad U_x(x^\sharp)^2 = N_J(x)^21_J, \\
\label{sharp.square} x^\sharp = x^2 - \Tr_J(x)x + S_J(x)1_J, \quad \text{and} \\
\label{cub.eq} x^3 - \Tr_J(x)x^2 + S_J(x)x - N_J(x)1_J = 0 = x^4 - \Tr_J(x)x^3 + S_J(x)x^2 - N_J(x)x 
\end{gather} 
for all $x \in J$.
For \eqref{sharp.norm}$\--$\eqref{sharp.square} and the first equation of \eqref{cub.eq}, see \cite[p.~499]{McC:FST}, while the second equation of \eqref{cub.eq} follows from the first, \eqref{inv.cub}, and \eqref{sharp.square} via $x^4 = U_xx^2 = U_xx^\sharp + \Tr_J(x)U_xx - S_J(x)U_x1_J = \Tr_J(x)x^3 - S_J(x)x^2 + N_J(x)x$. 
\end{defn}

\begin{rmk} \label{loc.lin}
Note that the second equality of \eqref{cub.eq} derives from the first through formal multiplication by $x$. But, due to the para-quadratic character of Jordan algebras, this is not a legitimate operation unless $2$ is invertible in $R$. In fact, cubic Jordan algebras exist that contain elements $x$ satisfying $x^2 = 0 \neq x^3$ \cite[1.31--1.32]{Jac:Tata}.
\end{rmk}

\begin{eg}[3-by-3 matrices] \label{Mat3}
We claim that $\Mat_3(R)^+$ is a cubic Jordan algebra, in particular it is $J(\bfM)$ for $\bfM := (\Mat_3(R), \Id, \sharp, \det)$, where $\sharp$ denotes the classical adjoint.  We first verify that $\bfM$ is a cubic norm structure.  Computing directly from the definition \eqref{bilt}, we find that $T_\bfM(x,y) = \Tr_{\Mat_3(R)}(xy)$, where the juxtaposition on the right is usual matrix multiplication.  The formulas in \eqref{cunun} are obvious.  For \eqref{cubad}, the first two equations can be verified directly and the third equation is a standard property of the classical adjoint, completing the proof the $\bfM$ is a cubic norm structure.  Similarly, one can check directly that the $U$-operator defined from the cubic norm structure by \eqref{uop.cub} equals the $U$-operator defined from the usual matrix product in Example \ref{special.jord}, i.e., $J(\bfM) = \Mat_3(R)^+$.
\end{eg}

\begin{lem} \label{cub.comp}
Let $J$ be a cubic Jordan $R$-algebra and $x,y \in J$.
\begin{enumerate}
\item \label{inv.char} $x$ is invertible in $J$ if and only if $N_J(x)$ is invertible in $R$. In this case
\begin{equation}
x^{-1} = N_J(x)^{-1}x^\sharp \quad \text{and} \quad N_J(x^{-1}) = N_J(x)^{-1}. \notag
\end{equation}

\item \label{inv.uni} Invertible elements of $J$ are unimodular.

\item \label{nor.comp} $N_J(U_xy) = N_J(x)^2N_J(y)$ and $N_J(x^2) = N_J(x)^2 = N_J(x^\sharp)$.
\end{enumerate}
\end{lem}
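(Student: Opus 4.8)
The plan is to deduce everything from the cubic norm structure axioms in Definition \ref{cubjord.def}, especially the identities \eqref{inv.cub}, \eqref{sharp.norm}, and \eqref{sharp.square}, together with the general Jordan-algebraic criterion for invertibility recalled after \eqref{jord.pow}: an element $x$ is invertible iff $1_J$ lies in the image of $U_x$ iff $U_x$ is invertible, in which case $x^{-1} = U_x^{-1}x$.

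For part \eqref{inv.char}, first suppose $N_J(x) \in R^\times$. Then the first identity in \eqref{inv.cub}, namely $U_x x^\sharp = N_J(x)x$, shows $x = U_x(N_J(x)^{-1}x^\sharp)$, and the second, $U_x(x^\sharp)^2 = N_J(x)^2 1_J$, shows $1_J = U_x(N_J(x)^{-2}(x^\sharp)^2)$; hence $1_J \in \im U_x$, so $x$ is invertible, and by uniqueness of the inverse we read off $x^{-1} = N_J(x)^{-1}x^\sharp$ from $U_x x^\sharp = N_J(x)x$. Conversely, if $x$ is invertible then $U_x$ is invertible, so $\det U_x \in R^\times$; I would argue that $\det U_x$ is a unit-multiple of a power of $N_J(x)$ — e.g.\ by base-changing to a local ring and then to the residue field, where over a field it is classical that $U_x$ invertible forces $N_J(x) \ne 0$ (this uses $U_x x^\sharp = N_J(x)x$ with $x$ unimodular, so $N_J(x) = 0$ would give $U_x x^\sharp = 0$ with $x^\sharp \neq 0$ unless $x^\sharp = 0$, and $x^\sharp = 0$ combined with $x^{\sharp\sharp} = N_J(x)x$ and $x$ a zero divisor handled separately, or more cleanly: over a field $N_J(x)=0$ makes $x$ singular in the generic-rank-3 representation). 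The cleanest uniform route is: reduce to $R$ a field via $f(x) := N_J(x)$ and Lemma \ref{law.uni}-style arguments, or observe directly that applying $N_J$ to $U_x x^\sharp = N_J(x) x$ and using part \eqref{nor.comp} (proved independently below) gives $N_J(x)^2 N_J(x^\sharp) = N_J(x)^3 N_J(x)$, i.e.\ $N_J(x)^2 N_J(x^\sharp) = N_J(x)^4$; combined with $N_J(x^\sharp) = N_J(x)^2$ this is just consistent, so instead I use that $U_x$ invertible implies $x^\sharp \in \im U_x \ni$ enough to force $N_J(x) \in R^\times$ by evaluating $x^{\sharp\sharp} = N_J(x)x$. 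Finally $N_J(x^{-1}) = N_J(x)^{-1}$ follows by applying $N_J$ to $x^{-1} = N_J(x)^{-1}x^\sharp$, using homogeneity of degree $3$ and $N_J(x^\sharp) = N_J(x)^2$ from \eqref{nor.comp}.

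For part \eqref{inv.uni}: if $x$ is invertible, then $N_J(x) \in R^\times$ by \eqref{inv.char}, so the cubic form $N_J$ takes a unit value at $x$; since $N_J(0) = 0$ (homogeneity) and $J$ is finitely generated projective, Lemma \ref{law.uni} applies directly to give that $x$ is unimodular. For part \eqref{nor.comp}: the identity $N_J(U_xy) = N_J(x)^2 N_J(y)$ is part of \eqref{sharp.norm}. Specializing $y = x$ and using $x^2 = U_x 1_J$... more directly, $N_J(x^2)$: one has $x^2 = U_x 1_J$... actually I'd get $N_J(x^\sharp) = N_J(x)^2$ by applying the first equation of \eqref{sharp.norm}, $(U_xy)^\sharp = U_{x^\sharp}y^\sharp$, at $y = 1_J$ to get $(x^2)^\sharp = U_{x^\sharp}1_J = (x^\sharp)^2$, then applying $N_J$ and using $N_J(U_xy) = N_J(x)^2N_J(y)$ twice; alternatively and most cleanly, apply $N_J$ to $x^{\sharp\sharp} = N_J(x)x$ from \eqref{cubad}: $N_J(x^{\sharp\sharp}) = N_J(x)^3 N_J(x)$, and $N_J(x^{\sharp\sharp}) = N_J(x^\sharp)^2$ (if we already know $N_J(z^\sharp) = N_J(z)^2$) — circular, so instead take the cleanest: $N_J(x)x^\sharp = U_x x^\sharp$... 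Let me settle on: apply $N_J(U_xy) = N_J(x)^2N_J(y)$ with $y$ replaced by $x^\sharp$ and use $U_x x^\sharp = N_J(x)x$ (from \eqref{inv.cub}) to get $N_J(x)^3 N_J(x) = N_J(x)^2 N_J(x^\sharp)$ as a polynomial identity on $J$, i.e.\ $N_J(x)^2(N_J(x^\sharp) - N_J(x)^2) = 0$ identically; since $N_J$ is not a zero divisor in the ring of polynomial laws when $J$ is faithfully of rank $27$ (or over a field, and then by density/faithfully flat descent), conclude $N_J(x^\sharp) = N_J(x)^2$. Then $N_J(x^2) = N_J(U_x 1_J) = N_J(x)^2 N_J(1_J) = N_J(x)^2$ using \eqref{cunun}.

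The main obstacle is the converse direction of \eqref{inv.char} and the cancellation $N_J(x)^2 \cdot (\text{something}) = 0 \Rightarrow (\text{something}) = 0$ in \eqref{nor.comp}: both require knowing that $N_J$ is a non-zero-divisor among polynomial laws on $J$, which is safe for an honest cubic Jordan algebra but must be argued — the clean way is to verify the identity $N_J(x^\sharp) = N_J(x)^2$ first over a field (where it is classical, since both sides are the norm of $x^\sharp$ and one can compute in $\Her_3(C,\Gamma)$ or diagonalize), then observe it is an identity of polynomial laws that holds after faithfully flat base change and so holds over $R$; and for the converse of \eqref{inv.char}, reduce to a field by noting $N_J(x) \in R^\times \iff N_J(x) \otimes 1 \in F^\times$ for all residue fields $F$, where over $F$ invertibility of $U_x$ forces $N_J(x) \ne 0$ by the generic minimal-polynomial identity \eqref{cub.eq}: if $N_J(x) = 0$ over $F$ then $x^3 = \Tr_J(x)x^2 - S_J(x)x$, so $x$ satisfies a degree-$\le 2$ annihilating... rather, $x^4 = \Tr_J(x)x^3 - S_J(x)x^2$ from \eqref{cub.eq} shows $U_x(x^2) = x^4$ lies in the span of lower powers, which with a short argument shows $U_x$ is not surjective. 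I would present this last point carefully as the crux.
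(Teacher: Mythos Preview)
Your proposal has a genuine gap stemming from a misreading of \eqref{sharp.norm}. That display does \emph{not} contain the scalar identity $N_J(U_xy) = N_J(x)^2N_J(y)$; it contains $N_J(U_xy)\,U_xy = N_J(x)^2N_J(y)\,U_xy$, an equality in $J$ with an extra factor of $U_xy$ on both sides. Extracting the scalar identity from this is precisely the content of part \eqref{nor.comp}, so you cannot simply cite \eqref{sharp.norm}. Your later paragraph does recognize a cancellation problem, but only for $N_J(x^\sharp)=N_J(x)^2$, and the fixes you sketch (``$N_J$ is not a zero divisor among polynomial laws'', or reduce to a field and descend) are not justified as written; the first claim is not obvious for a general cubic Jordan algebra over a ring, and the second needs a mechanism for passing a polynomial identity back up from residue fields, which you do not supply.

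The paper resolves both difficulties with the same device, and it also gives a much cleaner converse for \eqref{inv.char} than your field-reduction attempts. For the converse of \eqref{inv.char}: if $x$ is invertible, set $y := (x^{-1})^2$, so $U_xy = 1_J$; plugging this into $N_J(U_xy)\,U_xy = N_J(x)^2N_J(y)\,U_xy$ gives $1_J = N_J(x)^2N_J(y)\,1_J$, and unimodularity of $1_J$ (from \eqref{cunun} and Lemma~\ref{law.uni}) yields $N_J(x)^2N_J(y)=1$, so $N_J(x)\in R^\times$. For the first equation of \eqref{nor.comp}: apply Lemma~\ref{law.dense} with $g(x,y) = U_{U_xy}$ (noting $g(1_J,1_J)=\Id_J$) to reduce to the case where $U_xy$ is invertible; then by \eqref{inv.char} and \eqref{inv.uni} the element $U_xy$ is unimodular, so one may cancel it in \eqref{sharp.norm}. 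The identity $N_J(x^\sharp)=N_J(x)^2$ is handled the same way: Lemma~\ref{law.dense} lets you assume $x$ invertible, hence $N_J(x)\in R^\times$, and then your computation $N_J(x)^4 = N_J(U_xx^\sharp) = N_J(x)^2N_J(x^\sharp)$ goes through with honest cancellation. The missing idea throughout is Lemma~\ref{law.dense}, which converts ``holds on the Zariski-dense locus where such-and-such is invertible'' into ``holds identically'' without ever leaving the base ring.
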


\begin{proof}
(\ref{inv.char}): If $N_J(x)$ is invertible in $R$, then \eqref{inv.cub} shows that so is $x$, with inverse $x^{-1} = N_J(x)^{-1}x^\sharp$. Conversely, assume $x$ is invertible in $J$. Then $y := (x^{-1})^2$ satisfies $U_xy = 1_J$, and \eqref{sharp.norm} yields $1_J = N_J(U_xy)U_xy = N_J(x)^2N_J(y)1_J$, hence 
\[
N_J(x)^2N_J(y) = 1 
\]
since $1_J$ is unimodular by Lemma~\ref{law.uni} and \eqref{cunun}. Thus $N_J(x) \in R^\times$. Before proving the final formula of (\ref{inv.char}), we deal with (\ref{inv.uni}), (\ref{nor.comp}).

(\ref{inv.uni}) follows immediately from Lemma~\ref{law.uni} combined with the first part of (\ref{inv.char}).

\eqref{nor.comp}: Applying Lemma~\ref{law.dense} to the polynomial law $g\colon J \times J \to \End_R(J)$ defined by $g(x,y) := U_{U_xy}$ in all scalar extensions, we may assume that $U_xy$ is invertible. By \eqref{inv.uni}, therefore, $U_xy$ is unimodular, and the first equality follows from \eqref{sharp.norm}. The second equality follows from the first for $y = 1_J$, while in the third equality we may again assume that $x$ is invertible, hence unimodular. Then \eqref{inv.cub} combines with the first equality to imply 
$N_J(x)^4 = N_J(N_J(x)x) = N_J(U_xx^\sharp) = N_J(x)^2N_J(x^\sharp)$, as desired.

Now the second equality of \eqref{inv.char} follows from the first and \eqref{nor.comp} via
\[
N_J(x^{-1}) = N_J(x)^{-3}N_J(x^\sharp) = N_J(x)^{-1}. \qedhere
\]
\end{proof}

%\begin{rmk}
Without the assumption that $J$ is finitely generated projective as an $R$-module, Lemma~\ref{cub.comp} would be false \cite[Th.~10]{PR:rads}.
%\end{rmk}

\begin{eg}  \label{her.quad}
We endow the $R$-module $M := \Her_3(C,\Gamma)$ from Definition \ref{herm.lin} with a cubic norm $R$-structure $\bfM= (M,1_\bfM,\sharp,N_\bfM)$, where  $1_\bfM$ is the 3-by-3 identity matrix.
An element of $x \in \Her_3(C, \Gamma)$ may be written as
\[
x = \sbasGjord
\]
for $\alpha_i \in R$ and $c_i \in C$.  Because three of the entries are determined by symmetry, we may denote such an element by
\begin{equation} \label{basJord}
x  := \sum\nolimits_{i=1}^3 \left( \alpha_i \eps_i + \delta_i^\Gamma(c_i) \right),
\end{equation}
where $\eps_i$ has a 1 in the $(i, i)$ entry and zeros elsewhere, and $\delta_i^\Gamma(c)$ has $\gamma_{i+2}c$ in the $(i+1, i+2)$ entry --- where the symbols $i+1$ and $i+2$ are taken modulo 3 --- and zeros in the other entries not determined by symmetry.  In the literature on Jordan algebras, one finds the notation $c[(i+1)(i+2)]$ for what we denote $\delta_i(c)$.

We define the adjoint $\sharp$ by
\[
x^\sharp := \sum\nolimits_{i=1}^3\Big(\big(\alpha_{i+1}\alpha_{i+2} - \gamma_{i+1}\gamma_{i+2}n_C(c_i)\big)\varepsilon_i + \delta_i^\Gamma\big({-\alpha_ic_i} + \gamma_i\overline{c_{i+1}c_{i+2}}\big)\Big) 
\]
with indices mod 3, 
and the norm $N_\bfM$ by
\begin{equation}
N_\bfM(x) := \alpha_1\alpha_2\alpha_3 - \sum\nolimits_{i=1}^3\gamma_{i+1}\gamma_{i+2}\alpha_in_C(c_i) + \gamma_1\gamma_2\gamma_3\mathrm{Tr}_C(c_1c_2c_3)  \label{Frd.norm}
\end{equation}
in all scalar extensions, where the last summand on the right of \eqref{Frd.norm} is unambiguous since $\mathrm{Tr}_C((c_1c_2)c_3) = \mathrm{Tr}_C(c_1(c_2c_3))$ \cite[Th.~3.5]{McC:scalar}. By \cite[Th.~3]{McC:FST}, $\bfM$ is indeed a cubic norm structure. The corresponding cubic Jordan algebra will again be denoted by $J := \Her_3(C,\Gamma) := J(\bfM)$.  

(In case 2 is invertible in $R$, the commutative product $\jprod$ on $\Her_3(C, \Gamma)$ defined from the $U$-operator by \eqref{jprod.def} equals the product $x \jprod y := \frac12 (xy + yx)$ from Definition \ref{herm.lin}.  In order to see this, it suffices to note that the square of $x \in \Her_3(C, \Gamma)$ as defined in \eqref{jord.pow} is the same as the square of $x$ in the matrix algebra $\Mat_3(C)$.  This in turn follows immediately from \eqref{sharp.square}, \eqref{Frd.norm}, and the definition of the adjoint.)

For $x$ as above and $y = \sum(\beta_i\varepsilon_i + \delta_i^\Gamma(v_i))$, with $\beta_i \in R$, $v_i \in C$, evaluating the bilinear trace at $x,y$ yields
\begin{equation}
\label{bilt.mat} T_J(x,y) = \sum\nolimits_{i=1}^3 \big(\alpha_i\beta_i + \gamma_{i+1}\gamma_{i+2}n_C(u_i, v_i)\big).
\end{equation}
Since the bilinear trace $n_C$ is regular, so is $T_J$.

For the special case where $\Gamma = \Id$, we define $\Her_3(C) := \Her_3(C, \Id)$ and write $\delta_i$ for $\delta_i^\Gamma$.  It can be useful to write elements of $\Her_3(C)$ as
\[ 
\sbasjord
\]
where $\cdot$ denotes an entry that is omitted because it is determined by symmetry.  As an example of the triple product defined from \eqref{brace.def} and \eqref{uop.cub}, we mention that for $x = \sum \alpha_i \eps_i$  diagonal, we have
\begin{equation} \label{OFFTI}
\{ \delta_i(a) \delta_{i+1}(b) x \} =   \delta_{i+2}(\overline{ab}) \alpha_i \quad \text{and} 
\quad \{ \delta_{i+1}(b) \delta_i(a) x \} = \delta_{i+2}(\overline{ab}) \alpha_{i+1}
\end{equation}
for $i \in 1, 2, 3$ taken mod 3 and $a, b \in C$.
\end{eg}

Note that,  for the Jordan algebra $\Her_3(C, \Gamma)$ just defined, if we multiply $\Gamma$ by an element of $R^\times$ or any entry in $\Gamma$ by the square of an element of $R^\times$, we obtain an algebra isomorphic to the original.  Therefore, replacing $\Gamma$ by $\qform{ (\det \Gamma)^{-1} \gamma_1, (\det \Gamma) \gamma_2, (\det \Gamma) \gamma_3}$ does not change the isomorphism class of $\Her_3(C, \Gamma)$ and we may assume that $\gamma_1 \gamma_2 \gamma_3 = 1$.  

\begin{eg} \label{real.trace}
In case $R = \RR$, the preceding paragraph shows that it is sufficient to consider two choices for $\Gamma$, namely $\qform{1, 1, \pm 1}$.  We compute $T_{\Her_3(C, \Gamma)}$ for each choice of $C$ and $\Gamma$.  Regular symmetric bilinear forms over $\RR$ are classified by their dimension and signature (an integer), so it suffices to specify the signature.
If $C = \RR$, $\CC$, $\HH$, or $\OO$, the signature of $n_C$ is $2^r$ for $r = 0$, 1, 2, 3 respectively.  By \eqref{bilt.mat}, $T_J$ has signature $3(1 + 2^r)$ for $J = \Her_3(C)$ and $3 - 2^r$ for $J = \Her_3(C, \qform{1, 1, -1})$.  For $J$ the split Freudenthal algebra of rank $3(1 + 2^r)$ with $r = 1$, 2, or 3, the signature of $T_J$ is 3.
\end{eg}

\begin{rmk}
Alternatively, one could define the Jordan algebra structure on $\Her_3(C, \Gamma)$ for an arbitrary ring $R$ without referring to cubic norm structures as follows.  Writing out the formulas for the $U$-operator from Definition \ref{herm.lin} in case $R = \QQ$, one finds that the formulas do not involve any denominators other than $\gamma_i$ terms and therefore make sense for any $R$ regardless of whether 2 is invertible.  This makes $\Her_3(C, \Gamma)$ a para-quadratic algebra.  Because it is a Jordan algebra in case $R = \QQ$ as in Definition \ref{herm.lin}, we conclude that $\Her_3(C, \Gamma)$ is a Jordan algebra with no hypothesis on $R$ by extension of identities \cite[\S{IV.2.3}, Th.~2]{Bou:alg2}.  This alternative definition gives the same objects, but is much harder to work with.
\end{rmk}
%
%---------------------------------------------------------------
%
\section{Albert algebras are Freudenthal algebras are Jordan algebras}
%\begin{prop}
%Freudenthal algebras are cubic Jordan algebras.
%\end{prop}
%
%\begin{proof}
%\todo{Need to know that the cubic norm structure is unique in this case.  Refer to 2.4b or Prop.~2.7 in \cite{Loos:genalg}?}
%\end{proof}
%

\begin{defn} \label{Frd.def}
A \emph{split Freudenthal $R$-algebra} is a Jordan algebra $\Her_3(C)$ as in Example \ref{her.quad} for some split composition $R$-algebra $C$.  Because split composition algebras are determined up to isomorphism by their rank function, so are split Freudenthal algebras.

A para-quadratic $R$-algebra $J$ is a \emph{Freudenthal} algebra if $J \otimes S$ is a split Freudenthal $S$-algebra for some faithfully flat $S \in \Ralg$.  It is immediate that every Freudenthal algebra is a Jordan algebra.

Since every split Freudenthal $R$-algebra is finitely generated projective as an $R$-module for every $R$, the same is true for every Freudenthal $R$-algebra $J$ \cite[Tags 03C4, 05A9]{stacks-project}, and by the same reasoning we see that the identity element $1_J$ is unimodular.  Because the rank of a composition algebra takes values in $\{ 1, 2, 4, 8 \}$, the rank of a Freudenthal algebra takes values in $\{ 6, 9, 15, 27 \}$.  An \emph{Albert $R$-algebra} is a Freudenthal $R$-algebra of rank 27.
\end{defn}

%\begin{eg} \label{split.Frd}
%For rank 6, the split Freudenthal algebra is $\Her_3(R)$, i.e., the space of symmetric matrices.  For rank 9, the split Freudenthal algebra $\Her_3(R \times R)$ is isomorphic $\Mat_3(R)^+$ as defined in \ref{special.jord}.\todo{Provide explanation.}
%\end{eg}

\begin{prop} \label{red.is.Frd}
For every composition $R$-algebra $C$ and every $\Gamma \in \GL_3(R)$, $\Her_3(C, \Gamma)$ is a Freudenthal algebra.
\end{prop}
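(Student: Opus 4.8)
The plan is to reduce the claim to the split case, which is already covered by Definition \ref{Frd.def}, by exhibiting a faithfully flat cover over which both the composition algebra $C$ and the parameter matrix $\Gamma$ become split. The key observation is that "Freudenthal" is a local property in the faithfully flat topology, so it suffices to find, for a given $(C, \Gamma)$, a faithfully flat $S \in \Ralg$ with $\Her_3(C, \Gamma) \otimes S \cong \Her_3(C', \Id)$ for some split composition $S$-algebra $C'$.

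The argument proceeds in two independent steps. \emph{First}, I trivialize $\Gamma$. Writing $\Gamma = \qform{\gamma_1, \gamma_2, \gamma_3}$ with all $\gamma_i \in R^\times$, the paragraph following Example \ref{her.quad} already records that scaling an entry of $\Gamma$ by a square of a unit, or scaling all of $\Gamma$ by a unit, does not change the isomorphism class. Passing to the faithfully flat extension $S_0 := R[t_1, t_2, t_3]/(t_i^2 - \gamma_i)$ (which is finite free, hence faithfully flat over $R$), each $\gamma_i$ becomes a square in $S_0$, so $\Her_3(C, \Gamma) \otimes S_0 \cong \Her_3(C \otimes S_0, \Id)$ as $S_0$-algebras. \emph{Second}, I split the composition algebra: $C \otimes S_0$ is a composition $S_0$-algebra, and composition algebras are split by a faithfully flat cover --- indeed this is part of the very definition of "split" being meaningful, and it is the statement that the automorphism group scheme of a composition algebra is smooth (or, concretely, one can cite \cite[4.2]{LoosPR} for octonions and the analogous classical facts for lower rank); let $S$ be such a cover of $S_0$. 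Then $\Her_3(C, \Gamma) \otimes S \cong \Her_3(C \otimes S, \Id)$ is a split Freudenthal $S$-algebra, and $S$ is faithfully flat over $R$ as a composite of faithfully flat extensions. By Definition \ref{Frd.def}, $\Her_3(C, \Gamma)$ is a Freudenthal algebra.

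The one point requiring care --- and the main obstacle --- is the base-change compatibility $\Her_3(C, \Gamma) \otimes S \cong \Her_3(C \otimes S, \Gamma_S)$ of the \emph{para-quadratic} (equivalently, cubic-norm) structure, not merely of the underlying module. This is where one uses that the adjoint, norm, and $U$-operator in Example \ref{her.quad} are given by universal polynomial formulas in the $\alpha_i$, the $c_i$, and the structure constants of $C$ and $\Gamma$; since polynomial laws commute with base change, the cubic norm structure on $\Her_3(C, \Gamma)$ base-changes to that on $\Her_3(C \otimes S, \Gamma_S)$, and the isomorphism induced by the $\gamma_i \mapsto t_i^2$ rescaling is an isomorphism of cubic norm structures, hence of Jordan algebras. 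Once this is in hand the rest is bookkeeping: faithful flatness of finite free extensions and of composites, and the fact that a split composition algebra tensored up stays split (so no rank collapses), which guarantees $\Her_3(C \otimes S, \Id)$ is genuinely a \emph{split} Freudenthal $S$-algebra in the sense of Definition \ref{Frd.def}.
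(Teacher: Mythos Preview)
Your proof is correct and follows essentially the same route as the paper's: both adjoin square roots of the $\gamma_i$ to trivialize $\Gamma$ and pass to a faithfully flat cover splitting $C$, the only difference being that you perform these two steps in the opposite order (and omit the paper's preliminary reduction to constant rank, which is harmless). One minor quibble: your citation of \cite[4.2]{LoosPR} is for the definition of $\Zor(R)$, not for the splitting statement; the paper itself simply asserts the existence of a faithfully flat splitting of $C$ without proof, so you are on the same footing there.
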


\begin{proof}
Replacing $R$ with $R_e$ as in \eqref{decomp}, we may assume that $C$ has constant rank.  There is a faithfully flat $S \in \Ralg$ such that $C \otimes S$ is a split composition algebra.  

Consider $T := S[t_1, t_2, t_3] / (t_1^2 - \gamma_1, t_2^2 - \gamma_2, t_3^2 - \gamma_3)$.  It is a free $S$-module, so faithfully flat.  Then $\Her_3(C, \Gamma) \otimes T$ is isomorphic to $\Her_3(C \otimes T)$ as Jordan algebras, and the latter is a split Freudenthal algebra.
\end{proof}
  
The Freudenthal algebras $\Her_3(C, \Gamma)$ are said to be \emph{reduced}.

\begin{eg} \label{Frd.squares}
Let $J$ be a Freudenthal $R$-algebra.  \emph{If $x \in J$ has $U_x = \Id_J$, then $x = \zeta 1_J$ for $\zeta \in R$ such that $\zeta^2 = 1$.}  To see this, first suppose that $J$ is $\Her_3(C)$ for some composition algebra $C$ and write $x = \sum (\alpha_i \eps_i + \delta_i(c_i))$ for $\alpha_i \in R$ and $c_i \in C$.  We find
\[
U_x \eps_i = \alpha_i^2 \eps_i + \delta_{i+2}(\alpha_i c_{i+2}) + \cdots
\]
for each $i$,
so $\alpha_i^2 = 1$ and $c_{i+2} = 0$ for all $i$.  Then
\[
U_x \delta_i(1_C) = \delta_i(\alpha_{i+1} \alpha_{i+2} 1_C).
\]
Since $1_C$ is unimodular,  $\alpha_{i+1} \alpha_{i+2} = 1$ for all $i$, proving the claim for this $J$.  

For general $J$, let $S \in \Ralg$ be faithfully flat such that $J \otimes S$ is split.  Then $x \in J$ maps to an element of $R1_J \otimes S \subseteq J \otimes S$ and so belongs to $R1_J \subseteq J$.  Since $U_{\zeta 1_J} = \zeta^2 \Id_J$ for $\zeta \in R$, the claim follows.
\end{eg}

The following result is well known when $R$ is a field or perhaps a local ring, see for example \cite[Prop.~20]{Ptr:surv}.  We impose no hypothesis on $R$.

\begin{prop} \label{split.iso}
Suppose $C$ is a split composition $R$-algebra of constant rank at least 2, i.e., $C$ is $R \times R$, $\Mat_2(R)$, or $\Zor(R)$.  Then $\Her_3(C, \Gamma) \cong \Her_3(C)$ for all $\Gamma$.
\end{prop}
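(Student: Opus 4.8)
## Proof proposal

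The plan is to carry $\Her_3(C,\Gamma)$ to $\Her_3(C)=\Her_3(C,\qform{1,1,1})$ by a short chain of explicit isomorphisms. Three moves come for free: by the remark following Example~\ref{her.quad} one may rescale $\Gamma$ by any $\lambda\in R^\times$, or rescale a single diagonal entry $\gamma_i$ by the square of a unit. The move requiring work is a \emph{transfer}: for each $a\in C^\times$ and each pair of indices, an isomorphism
\[
\Her_3\bigl(C,\qform{\gamma_1,\gamma_2,\gamma_3}\bigr)\ \xrightarrow{\ \sim\ }\ \Her_3\bigl(C,\qform{n_C(a)\gamma_1,\ n_C(a)^{-1}\gamma_2,\ \gamma_3}\bigr)
\]
(and likewise for the other index pairs). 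Granting transfers, the reduction is elementary: since $C$ is split of constant rank $\ge 2$, the norm $n_C$ restricted to the ``diagonal'' copy of $R\times R$ inside $C$ --- the pairs $(s,t)$ in $R\times R$, the matrices $\diag(s,t)$ in $\Mat_2(R)$, or the elements $\stbtmat{s}{0}{0}{t}$ in $\Zor(R)$ --- already attains every value $st\in R^\times$. So, writing $\delta=\gamma_1\gamma_2\gamma_3$, two transfers carry $\qform{\gamma_1,\gamma_2,\gamma_3}$ to $\qform{1,1,\delta}$, a global rescaling by $\delta^{-1}$ produces $\qform{\delta^{-1},\delta^{-1},1}$, a third transfer gives $\qform{1,\delta^{-2},1}$, and a single square‑rescaling by $\delta$ finishes.

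For the transfer with $a\in C^\times$, set $\mu:=n_C(a)$ and define $\phi$ from $\Her_3(C,\Gamma)$ to $\Her_3(C,\Gamma')$, $\Gamma'=\qform{\mu\gamma_1,\mu^{-1}\gamma_2,\gamma_3}$, by fixing the diagonal idempotents $\eps_i$ and sending
\[
\delta_1^\Gamma(c)\longmapsto\delta_1^{\Gamma'}(ac),\qquad \delta_2^\Gamma(c)\longmapsto\delta_2^{\Gamma'}(ca^{-1}),\qquad \delta_3^\Gamma(c)\longmapsto\delta_3^{\Gamma'}\bigl((ac)a^{-1}\bigr).
\]
This is $R$‑linear, fixes $1_J=\eps_1+\eps_2+\eps_3$, and is bijective, with inverse the transfer formula for $a^{-1}$ (using the division identities $a^{-1}(ac)=c$, $(ca^{-1})a=c$ in $C$). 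Because the bilinear trace $T_J$ of a cubic Jordan algebra is regular here (Example~\ref{her.quad}), the identity $(\grd_yN_J)(x)=T_J(x^\sharp,y)$ in \eqref{cubad} shows the adjoint --- hence, via \eqref{uop.cub}, the entire Jordan structure --- is determined by the base point and the cubic norm; so it is enough to verify $N_{\Gamma'}(\phi(x))=N_\Gamma(x)$ in all scalar extensions. In the formula \eqref{Frd.norm} the term $\alpha_1\alpha_2\alpha_3$ is untouched; each term $\gamma_{i+1}\gamma_{i+2}\alpha_in_C(c_i)$ is preserved because $n_C$ is multiplicative, so $n_C(ac)=\mu\,n_C(c)$, $n_C(ca^{-1})=\mu^{-1}n_C(c)$ and $n_C((ac)a^{-1})=n_C(c)$, which exactly cancels the change in the coefficients; and for the trace term one computes, using the Moufang identity $(ax)(ya^{-1})=a(xy)a^{-1}$ (twice), the division identity $a^{-1}\bigl((ac_3)a^{-1}\bigr)=c_3a^{-1}$, and the invariance of $\Tr_C$ under conjugation,
\begin{align*}
\Tr_C\bigl((c_1'c_2')c_3'\bigr)
 &=\Tr_C\bigl((a(c_1c_2)a^{-1})((ac_3)a^{-1})\bigr)
 =\Tr_C\bigl((a(c_1c_2))(c_3a^{-1})\bigr)\\
 &=\Tr_C\bigl(a((c_1c_2)c_3)a^{-1}\bigr)
 =\Tr_C\bigl((c_1c_2)c_3\bigr),
\end{align*}
so the trace term $\gamma_1\gamma_2\gamma_3\Tr_C(c_1c_2c_3)$ (its coefficient unchanged by a transfer) is preserved.

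The one genuinely delicate point --- and the reason for the asymmetric formula for $\phi$ --- is the trace computation in the non‑associative octonion case: the naive guess of conjugating each $c_i$ by a suitable unit fails, since for octonions $(ub)(\bar b v)\neq n_C(b)uv$ in general, so the trace term cannot be telescoped via $b\bar b=n_C(b)1_C$; the pattern $(ac,\ ca^{-1},\ (ac)a^{-1})$ is chosen precisely so that the telescoping happens instead through the Moufang identities and $a^{-1}a=1_C$. As a sanity check, the whole argument is unnecessary when $C=R\times R$: there $\Her_3(R\times R,\Gamma)\cong\Mat_3(R)^+$ for every $\Gamma$, since a hermitian $3$‑by‑$3$ matrix over $R\times R$ with the exchange involution may be written as a pair $(A,B)$ of matrices over $R$ with $B$ forced by $A$ and $\Gamma$, and $(A,B)\mapsto A$ is a $\Gamma$‑independent isomorphism.
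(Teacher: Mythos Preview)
Your reduction via ``transfers'' is attractive and does work when $C$ is associative, but the octonion case contains a genuine error: the displayed identity $(ax)(ya^{-1})=a(xy)a^{-1}$ is \emph{not} a Moufang identity and fails in octonion algebras. (The middle Moufang identity is $(ax)(ya)=a(xy)a$; replacing the right-hand $a$ by $a^{-1}$ is not legitimate, because $(L_a,R_{a^{-1}},L_aR_{a^{-1}})$ is not an autotopy of an alternative algebra in general.) For a concrete counterexample over $\RR$, take $a=1+e_1$, $x=e_2$, $y=e_3$ in $\OO$ with the multiplication table of Example~\ref{octaves}; a direct computation gives $(ax)(ya^{-1})=-e_6$ while $a(xy)a^{-1}=e_6$.

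This is fatal for your $\phi$, not just for the intermediate step: with the same $a$ and $c_1=e_2$, $c_2=e_3$, $c_3=e_5$ one finds $c_1'c_2'=-e_6$ and $c_3'=(ac_3)a^{-1}=e_6$, so
\[
\Tr_C\bigl((c_1'c_2')c_3'\bigr)=\Tr_C(-e_6\cdot e_6)=\Tr_C(1_C)=2,
\qquad
\Tr_C\bigl((c_1c_2)c_3\bigr)=\Tr_C(e_5\cdot e_5)=\Tr_C(-1_C)=-2.
\]
Thus $N_{\Gamma'}(\phi(x))\neq N_\Gamma(x)$ and $\phi$ is not an isomorphism of cubic norm structures when $C$ is an octonion algebra. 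The paper avoids this obstruction by passing through the isotope $C^{(p,q)}$ with multiplication $x\cdot_{(p,q)}y:=(xp)(qy)$, which packages the genuine (middle) Moufang identity correctly, and then proving $C^{(p,q)}\cong C$---for $\Zor(R)$ by choosing $p,q$ diagonal and writing down an explicit isomorphism. If you want to salvage the transfer viewpoint for octonions, you must build it from an actual autotopy of $C$ (equivalently, from an isotope), not from the formula you wrote.
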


\begin{proof}
Since $n_C$ is universal, there are invertible $p, q \in C$ such that $\gamma_2  = n_C(q^{-1})$ and $\gamma_3 = n_C(p^{-1})$, so $\gamma_1 = n_C(pq)$. 
We define $C^{(p,q)}$ to be a not-necessarily associative $R$-algebra with the same underlying $R$-module structure and with multiplication $\cdot_{(p,q)}$ defined by 
\[
x \cdot_{(p,q)} y := (xp)(qy),
\]
where the multiplication on the right is the multiplication in $C$.  Certainly $(pq)^{-1}$ is an identity element in $C^{(p,q)}$.  The algebra $C^{(p,q)}$ is called an isotope of $C$ and is studied in \cite{McC:homo}, where it is proved to be alternative.  One checks that it is a composition algebra with quadratic form $n_{C^{(p,q)}} = n_C(pq) n_C$, see \cite[Prop.~5]{McC:homo} for a more general statement in case $R$ is a field.

Define $\phi \!: \Her_3(\iso{C}{p,q}) \to \Her_3(C, \Gamma)$ via $\phi(\sum x_i \eps_i + \delta_i(c_i)) = \sum x_i \eps_i + \delta_i^\Gamma(c'_i)$, where
\[
c'_1 = (pq) c_1 (pq), \quad c'_2 =  c_2p, \quad \text{and} \quad c'_3 = qc_3.
\]
It is evidently an isomorphism of $R$-modules and one checks that it is an isomorphism of Jordan algebras, compare \cite[Th.~3]{McC:homo}.  Therefore, we are reduced to verifying that $\iso{C}{p,q}$ is split.

If $C$ is associative, then the $R$-linear map
\[
L_{pq} \colon \iso{C}{p,q} \to C \quad \text{such that} \quad L_{pq}(x) = pqx
\]
is an isomorphism of $R$-algebras.  So assume $C = \Zor(R)$.

At the beginning, when we chose $p$ and $q$, we were free to pick $\xi_i, \eta_i \in R^\times$ such that $p = \stbtmat{\xi_1}{0}{0}{\xi_2}$ and $q = \stbtmat{\eta_1}{0}{0}{\eta_2}$.  Let $A \in \Mat_3(R)$ be any matrix such that $\det A = (\xi_1 \xi_2^2 \eta_1^2 \eta)^{-1}$ and put $B := \xi_2 \eta_1 (A^\sharp)^{\trans}$, where $\sharp$ denotes the classical adjoint.  
With $\zeta_i := (\xi_i \eta_i)^{-1}$, one checks, using the formula $(Sx) \times (Sy) = (S^\sharp)^\trans(x \times y)$ for $\times$ the usual cross product in $R^3$, that the assignment
\[
\stbtmat{\alpha_1}{u_1}{u_2}{\alpha_2} \mapsto \stbtmat{\zeta_1 \alpha_1}{Au_1}{Bu_2}{\zeta_2 \alpha_2}
\]
defines an isomorphism $C \xrightarrow{\sim} \iso{C}{p,q}$.
\end{proof}

%
%----------------------------------------------------------------------------------------------------------------
%
\section{The ideal structure of Freudenthal algebras} \label{simple.sec}

%
% IDFREU p638   IDCUJM
%

It is a standard exercise to show that every (two-sided) ideal in the matrix algebra $\Mat_n(R)$ is of the form $\Mat_n(\idl)$ for some ideal $\idl$ in $R$.  More generally, every ideal in an Azumaya $R$-algebra $A$ is of the form $\idl A$ some ideal $\idl$ of $R$ \cite[p.~95, Cor.~III.5.2]{KO}.

A similar result holds for every octonion $R$-algebra $C$: \emph{Every one-sided ideal in $C$ is a two-sided ideal that is stable under the involution on $C$.  The maps $I \mapsto I \cap R$ and $\idl C \mapsfrom \idl$ are bijections between the set of ideals of $C$ and ideals in $R$.}  See \cite[\S4]{PeMS} for a proof in this generality and the references therein for earlier results of this type going back to \cite{Mahler}.

We now prove a similar result for Freudenthal algebras.

\begin{defn} \label{pq.ideal}
An \emph{ideal} in a para-quadratic $R$-algebra $J$ is the kernel of a homomorphism, i.e., an $R$-submodule $I$ such that 
\[
U_I J + U_J I + \{ J J I \} = I,
\]
where we have written $U_I J$ for the $R$-span of $U_x y$ with $x \in I$ and $y \in J$.  (This is sometimes written with a $\subseteq$ instead of $=$, but the two are equivalent since $U_J I \supseteq U_{1_J} I = I$.)  An $R$-submodule $I$ is an \emph{outer ideal} if
\begin{equation} \label{outer.ideal.def}
U_J I + \{ J J I \} = I.
\end{equation} 
\end{defn}

Here are some observations about outer ideals:
\begin{enumerate}
\item Every ideal is an outer ideal.  

\item If 2 is invertible in $R$, then for every $x \in I$ and $y \in J$, $U_x y = \frac12 \{ xyx \} \in \{ J J I \}$, so the notions of ideal and outer ideal coincide, and both agree with the notion of ideal for the commutative bilinear product $\jprod$ defined in \eqref{jprod.def}.
\item \label{outer.3} For every ideal $\idl$ in $R$, the $R$-submodule $\idl J$ is an ideal of $J$.

\item If $1_J$ is unimodular, then for every outer ideal $I$ of $J$, $I \cap R1_J$ is an ideal in $R$, for the trivial reason that $I$ is an $R$-module.

\item \label{outer.5}  If $\idl$ is an ideal in $R$ and $1_J$ is unimodular, then $\idl 1_J = (\idl J) \cap R 1_J$.  The containment $\subseteq$ is clear.  To see the opposite containment, suppose $\alpha 1_J \in \idl J \cap R1_J$ for some $\alpha \in R$ and write $\alpha1_J = \sum \alpha_i y_i$ with $\alpha_i \in \idl$ and $y_i \in J$.  There is some $R$-linear $\lambda \!: J \to R$ such that $\lambda(1_J) = 1$.  Then $\alpha = \lambda(\alpha 1_J) = \sum \alpha_i \lambda(y_i)$ is in $\idl$.
%\item \label{outer.sq} Write $\Sq(J)$ for the $R$-submodule of $J$ spanned by the $x^2$ for $x \in J$.  For every outer ideal $I$ in $J$, $(I \cap R) \Sq(J) \subseteq I$.  To see this, let $\alpha \in I \cap R1_J$ and $x = \sum U_{x_i} r_i \in \Sq(J)$ for $x_i \in J$ and $r_i \in R$.  Then $\alpha x = \sum U_{x_i} (\alpha r_i)$, where $\alpha r_i \in \idl$.  Since $I$ is an outer ideal, $\alpha x$ is in $I$.
\end{enumerate}

\begin{thm} \label{IDCUJM}
Let $J$ be a Freudenthal $R$-algebra.  Every outer ideal of $J$ is an ideal.  The maps $I \mapsto I \cap R1_J$ and $\idl J \mapsfrom \idl$ are bijections between the set of outer ideals of $J$ and the set of ideals of $R$.
\end{thm}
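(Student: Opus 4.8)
The strategy is to reduce everything to the split case and there to a hands-on computation with $\Her_3(C)$. The key structural maps are $\Phi \colon \idl \mapsto \idl J$ and $\Psi \colon I \mapsto I \cap R1_J$, and I want to show (i) both land in the right target, (ii) $\Psi \circ \Phi = \mathrm{id}$, (iii) $\Phi \circ \Psi = \mathrm{id}$, and (iv) every outer ideal is actually an ideal. Observations \eqref{outer.3}, \eqref{outer.5} in the list preceding the theorem already give that $\idl J$ is an ideal (hence an outer ideal) and that $\Psi(\Phi(\idl)) = (\idl J) \cap R1_J = \idl 1_J$, which is $\idl$ after identifying $R$ with $R1_J$ (legitimate since $1_J$ is unimodular, as noted in Definition \ref{Frd.def}). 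So the real content is: given an outer ideal $I$, show $I = (I \cap R1_J) J$ and that $I$ is an ideal.

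First I would dispose of the case distinction coming from the rank of $C$: write $R = \prod R_e$ as in \eqref{decomp} so that $J = \prod J_e$ with each $J_e$ a Freudenthal $R_e$-algebra coming from a composition algebra of constant rank; ideals, outer ideals, and the two maps all decompose compatibly over a finite product, so it suffices to treat each factor. Next, the faithfully flat descent step: if $S \in \Ralg$ is faithfully flat with $J \otimes S$ split, then for an outer ideal $I \subseteq J$ one has $I \otimes S$ an outer ideal of $J \otimes S$ (the defining relation \eqref{outer.ideal.def} is preserved and reflected by faithfully flat base change, since $\otimes S$ is exact and $U$, $\{\cdot\cdot\cdot\}$ are defined by polynomial laws), similarly for ideals, and the formation of $I \cap R1_J$ commutes with flat base change. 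Thus it is enough to prove the theorem when $J = \Her_3(C)$ for $C$ split of constant rank $1, 2, 4, 8$ — i.e. $C = R$, $R \times R$, $\Mat_2(R)$, or $\Zor(R)$ — over an arbitrary $R$. (Strictly, to descend the statement "$I = (I\cap R1_J)J$" one checks it holds for $I \otimes S$ and then uses that $(I\otimes S) \cap (R1_J\otimes S) = (I \cap R1_J)\otimes S$.)

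The heart of the proof is then the explicit computation in $\Her_3(C)$ with $C$ split. Fix an outer ideal $I$. Using the generators $\eps_i$ and $\delta_i(c)$ and the triple-product formulas \eqref{OFFTI} together with the $U$-operator from Example \ref{her.quad}, I would argue as follows. Given $0 \ne x = \sum(\alpha_i \eps_i + \delta_i(c_i)) \in I$, apply outer-ideal operations to "harvest" the coordinates: e.g. $U_{\eps_i} x$, $\{\eps_i \eps_i x\}$, and brackets $\{\delta_j(a)\, \delta_k(b)\, x\}$ with suitable $a,b$ pull out the individual pieces $\alpha_i \eps_i$ and $\delta_i(c_i)$ into $I$ (this is where the split hypothesis matters: $n_C$ is universal / hyperbolic enough that one can choose $a,b \in C$ realizing any needed value, and $1_C$ is unimodular so $\delta_i(1_C)$ generates a copy of $R$). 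Then, from a nonzero $\alpha_i \eps_i \in I$ one moves mass around the three diagonal idempotents and onto the off-diagonal slots via further $U$- and brace-operations to conclude $\alpha_i 1_J \in I$, giving $\alpha_i \in I \cap R1_J =: \idl$, hence $\alpha_i \eps_i = \eps_i \cdot \alpha_i \in \idl J$; similarly from $\delta_i(c_i) \in I$ one extracts that every coordinate of $c_i$ lies in $\idl$ and hence $\delta_i(c_i) \in \idl J$. Summing, $x \in \idl J$, so $I \subseteq \idl J$; the reverse inclusion $\idl J \subseteq I$ holds because $\idl = I \cap R1_J \subseteq I$ and $I$ is an $R$-submodule closed under $U_J(\cdot)$ — indeed $\idl J \subseteq \sum_{a \in \idl} a J \subseteq \sum U_{?}(aI\!-\!\text{stuff})$... more cleanly: $\idl J = (I\cap R1_J)J \subseteq U_J I + \{JJI\} = I$ using $U_y(\alpha 1_J) \in U_J I$ and $R1_J\subseteq J$. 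This simultaneously shows $I = \idl J$ is an ideal (it's of the form in observation \eqref{outer.3}), proving "every outer ideal is an ideal," and gives $\Phi\circ\Psi = \mathrm{id}$, completing the bijection.

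The main obstacle I anticipate is the split-case coordinate extraction done carefully and uniformly across $C = R, R\times R, \Mat_2(R), \Zor(R)$ with no assumption that $2$ (or $3$) is invertible: one must be sure the chosen brace/$U$ operations only use elements of $C$ whose existence is guaranteed by splitness (not invertibility of scalars), and one must handle the diagonal case carefully since $x^2 \ne x^\sharp$-type subtleties and the para-quadratic (non-linear) nature of $U$ mean "multiply by $x$" is illegal (cf. Remark \ref{loc.lin}). A clean way to organize it is to first show any outer ideal containing a unimodular diagonal-supported element, or containing some $\eps_i$-component with a unit coefficient, equals $J$, and then to reduce the general case to this by passing to $R/\idl$ where $\idl = I\cap R1_J$ — over $R/\idl$ one shows the image of $I$ is zero, i.e. $I \subseteq \idl J$ — which is itself proved by the coordinate-harvesting. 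Keeping the bookkeeping of indices mod $3$ straight in \eqref{OFFTI} and the adjoint formula is the tedious-but-routine part.
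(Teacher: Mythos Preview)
Your overall strategy---reduce to $\Her_3(C)$, harvest coordinates via Peirce-type projections $U_{\eps_i}$ and $\{\eps_j\,\cdot\,\eps_l\}$, then descend---matches the paper's. Two differences are worth flagging.

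First, the paper does not reduce all the way to split $C$. It works with $J = \Her_3(C)$ for an arbitrary composition algebra $C$, uses the Peirce projections (which are outer-ideal operations) to write $I = \sum_i (I \cap R\eps_i) + (I \cap \delta_i(C))$, and then shows via \eqref{OFFTI} that $B := \{c \in C : \delta_1(c) \in I\}$ is a two-sided ideal of $C$. At that point it invokes the known ideal structure of composition algebras (every ideal of $C$ equals $\idl C$ for a unique ideal $\idl$ of $R$, as recalled at the start of \S\ref{simple.sec}) to get $B = \idl C$ in one stroke. This replaces your ``extract every coordinate of $c_i$'' step with a citation and works uniformly without case-splitting on $C$. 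Your direct approach would succeed for split $C$, but amounts to reproving that composition-algebra result by hand.

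Second, your argument for $\idl J \subseteq I$ has a gap: from $\alpha 1_J \in I$ the operation $U_y(\alpha 1_J) = \alpha y^2$ only gives $\idl \cdot \Sq(J) \subseteq I$, and $\{1_J\, y\, (\alpha 1_J)\} = 2\alpha y$ does not help when $2 \notin R^\times$. The paper sidesteps this entirely: having established $I \cap R\eps_i = \idl$ and $I \cap \delta_i(C) = \delta_i(\idl C)$ for every $i$, the Peirce decomposition yields $I = \idl J$ as an \emph{equality}, so no separate inclusion needs to be checked. (The post-proof remark in the paper actually records your route as an alternative, noting that it requires the additional fact $\Sq(J) = J$ for split Freudenthal algebras.)
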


\begin{proof}
It suffices to show that the stated maps are bijections, because then observation \eqref{outer.3} implies that every outer ideal is of the form $\idl J$ and therefore an ideal.  In view of \eqref{outer.5} (noting that $1_J$ is unimodular), it suffices to verify that $(I \cap R1_J)J = I$ for every outer ideal $I$.  

\smallskip
First suppose that $J = \Her_3(C)$ for some composition $R$-algebra $C$ and write $\idl := I \cap R1_J$.  The Peirce projections relative to the diagonal frame of $J$, i.e., $U_{\eps_i}$ and $x \mapsto \{\eps_j x \eps_l \}$ for $i, j, l = 1, 2, 3$ \cite[p.~1074]{McC:NAS} stabilize $I$, and we find
\[
I = \sum_i (I \cap R\eps_i) + (I \cap \delta_i(C)).
\]

Set $B := \{ c \in C \mid \delta_1(c) \in I \}$.  We claim that $B$ is an ideal in $C$.  Note that $U_{\delta_1(1_C)} \delta_1(b) = \delta_1(\bar{b})$, so $B$ is stable under the involution.

We leverage \eqref{OFFTI}.
Repeatedly applying this with $a = 1_C$ and using that $B$ is stable under the involution, we conclude that $\delta_i(B) = I \cap \delta_i(C)$ for all $i$.  For $c \in C$ and $b \in B$, $I$ contains $\{ 1_J \delta_2(\bar{c}) \delta_1(\bar{b}) \} = \delta_3(c b)$, so $cB \subseteq B$, i.e., $B$ is an ideal in $C$ and therefore $B = \idl C$ for some ideal $\idl$ of $R$.

For $c \in C$, $I$ contains $\{ \delta_i(1_C) \eps_{i+1} \delta_i(\idl c) \} = \Tr_C(\idl c) \eps_{i+2}$.  Since $\Tr_C$ is surjective, $\idl \eps_j \subseteq I$ for all $j$.

In the other direction, if $\alpha_i \eps_i \in I$, then so is 
\[
\{ \delta_{i+1}(1_C) 1_J (\alpha_i \eps_i) \} = \delta_{i+1}(\alpha_i 1_C).
\]
It follows that $I \cap R\eps_i = \idl R$ for all $i$ and in particular, $I \cap R1_J = \idl R$ and $I = \idl J$.

\smallskip
We now treat the general case.
Suppose $I$ is an outer ideal in a Freudenthal $R$-algebra $J$.  There is a faithfully flat $S \in \alg{R}$ such that $J \otimes S$ is a split Freudenthal algebra.  We have
\[
((I \cap R1_J) J) \otimes S = (I \otimes S \cap S1_J) (J \otimes S) = I \otimes S
\]
where the first equality is because $S$ is faithfully flat and the second is by the previous case, since $I \otimes S$ is an outer ideal.  It follows that $I = (I \cap R1_J)J$ as desired.
\end{proof}

\begin{rmk*}
In the proof above, the inclusion $(I \cap R1_J) J \subseteq I$ could instead have been argued as follows.  Define $\Sq(J)$ as the $R$-submodule of $J$ generated by $x^2$ for $x \in J$.  Since $1_J$ is unimodular, one finds that $(I \cap R1_J) \Sq(J) \subseteq I$.  Then, one argues that $\Sq(J) = J$ for a split Freudenthal algebra, and that $\Sq(J \otimes S) = \Sq(J) \otimes S$ for all flat $S \in \Ralg$.
\end{rmk*}

\begin{cor} \label{Frd.noaug}
Let $\phi \colon J \to A$ be a homomorphism of para-quadratic $R$ algebras.  If $J$ is a Freudenthal algebra and $1_A$ is unimodular in $A$, then $\phi$ is injective.
\end{cor}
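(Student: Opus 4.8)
The plan is to show that the kernel of $\phi$ is zero by using Theorem~\ref{IDCUJM} to classify the possible kernels. Since $\phi$ is a homomorphism of para-quadratic algebras, its kernel $I := \ker\phi$ is an ideal of $J$ in the sense of Definition~\ref{pq.ideal}. Because $J$ is a Freudenthal algebra, Theorem~\ref{IDCUJM} tells us that $I = \idl J$ for a unique ideal $\idl$ of $R$, and moreover $\idl 1_J = I \cap R1_J$. So it suffices to prove $\idl = 0$, or equivalently $\idl 1_J = 0$, i.e., that $I$ contains no nonzero scalar multiple of $1_J$.

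The key point is that $\phi(1_J) = 1_A$, so $1_J \notin I$ unless $1_A$ would have to be killed — but more is true: for any $\alpha \in \idl$ we have $\alpha 1_J \in I$, hence $0 = \phi(\alpha 1_J) = \alpha\,\phi(1_J) = \alpha 1_A$. Since $1_A$ is unimodular, there is some $\lambda \in A^*$ with $\lambda(1_A) = 1$, and applying $\lambda$ gives $\alpha = \alpha\lambda(1_A) = \lambda(\alpha 1_A) = 0$. Thus $\idl = 0$, so $I = \idl J = 0$ and $\phi$ is injective.

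The main (minor) obstacle is simply bookkeeping: one must check that $\ker\phi$ really is an ideal in the para-quadratic sense, which follows directly from the definition of homomorphism — if $\phi(x) = 0$ then $\phi(U_x y) = U'_{\phi(x)}\phi(y) = 0$, $\phi(U_y x) = U'_{\phi(y)}\phi(x) = 0$, and $\phi(\{yyx\}) = \{\phi(y)\phi(y)\cdot 0\} = 0$, so $U_I J + U_J I + \{JJI\} \subseteq I$, with equality because $U_J I \supseteq U_{1_J} I = I$. Everything else is immediate from Theorem~\ref{IDCUJM}, observation~\eqref{outer.5}, the unimodularity of $1_A$, and the fact (already recorded in Definition~\ref{Frd.def}) that $1_J$ is unimodular. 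No computation inside $\Her_3(C)$ is needed, since that work was already done in the proof of Theorem~\ref{IDCUJM}.
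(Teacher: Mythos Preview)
Your proof is correct and follows essentially the same route as the paper: identify $\ker\phi$ as an ideal, invoke Theorem~\ref{IDCUJM} to write it as $\idl J$, and use unimodularity of $1_A$ to force $\idl = 0$. Your extra verification that $\ker\phi$ is an ideal is in fact redundant here, since Definition~\ref{pq.ideal} \emph{defines} an ideal to be the kernel of a homomorphism (and your check of the $\{JJI\}$ condition only treats $\{yyx\}$ rather than general $\{yzx\}$, though the general case goes through the same way).
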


In particular, the corollary applies to every homomorphism between Freudenthal $R$-algebras.

\begin{proof}
The kernel of $\phi$ is an ideal of $J$ and therefore $\idl J$ for some ideal $\idl$ of $R$.  For $\alpha \in \idl$, we have
$0 = \phi(\alpha 1_J) = \alpha \phi(1_J) = \alpha 1_A$, so $\alpha = 0$ because $1_{A}$ is unimodular.
$0 = \phi(\alpha 1_J) = \alpha \phi(1_J) = \alpha 1_{A}$, so $\alpha = 0$ because $1_{A}$ is unimodular.
\end{proof}

Because a Freudenthal algebra $J$ is a projective $R$-module of rank $\ge 3$, the corollary says that there is no homomorphism of para-quadratic $R$-algebras $J \to R^+$.  This might be stated as $J$ \emph{is not augmented} or $J$ \emph{has no counit}.

A para-quadratic algebra $J$ is said to be \emph{simple} if the underlying module is not the zero module and if every ideal in $J$ equals 0 or $J$.  Theorem \ref{IDCUJM} immediately gives:

\begin{cor} \label{Frd.simple}
If $J$ is a Freudenthal $R$-algebra and $R$ is a field, then $J$ is simple. $\hfill\qed$
\end{cor}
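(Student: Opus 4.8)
The plan is to deduce this immediately from Theorem \ref{IDCUJM}. Since $R$ is a field, its only ideals are $0$ and $R$ itself. Theorem \ref{IDCUJM} asserts that $\idl \mapsto \idl J$ is a bijection from the set of ideals of $R$ onto the set of outer ideals of $J$, so the only outer ideals of $J$ are $0 \cdot J = 0$ and $R \cdot J = J$. Because every (two-sided) ideal of $J$ is in particular an outer ideal --- this is the first observation listed after Definition \ref{pq.ideal} --- the only ideals of $J$ are $0$ and $J$.

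It then remains only to check that the underlying module of $J$ is nonzero, which is part of the definition of simplicity. This is immediate: a Freudenthal $R$-algebra has rank in $\{6, 9, 15, 27\}$ by Definition \ref{Frd.def}, so its underlying module is not the zero module; alternatively, $1_J$ is unimodular and in particular nonzero. Hence $J$ is simple.

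There is essentially no obstacle here. All of the work has been done in Theorem \ref{IDCUJM}, and once that classification of outer ideals is available the corollary is a one-line verification. The only point deserving a moment's attention is the (trivial) non-vanishing of $J$, which is precisely why the definition of "simple" was stated so as to exclude the zero module; I would mention it explicitly rather than leave it implicit.
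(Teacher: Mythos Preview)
Your proof is correct and matches the paper's approach exactly: the paper simply states that Theorem~\ref{IDCUJM} immediately gives the corollary, and you have spelled out precisely that immediate deduction (including the small point that $J$ is nonzero).
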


\begin{rmk*}
There is also the notion of an \emph{inner} ideal in a Jordan algebra, see \cite[Th.~8]{McC:inn} for a description of them for $\Her_3(\Zor(R))$.  The inner ideals are related to the projective homogeneous varieties associated with the group of isometries described in \S\ref{E6.sec} and ``outer automorphisms'' relating these varieties, see \cite{Racine:point} and \cite{CG}.
\end{rmk*}

%----------------------------------------------------------------------------------------------------------------
%
\section{Groups of type \texorpdfstring{$\eff_4$}{F4} and \texorpdfstring{$\cee_3$}{C3}}

In the following, for a Jordan $R$-algebra $J$, we write $\Aut(J)$ for the ordinary group of $R$-linear automorphisms of $J$ and $\bfAut(J)$ for the functor from $\alg{R}$ to groups such that $S \mapsto \Aut(J \otimes S)$.  Recall that for every simple root datum, there is a unique simple group scheme over $\ZZ$ called a \emph{Chevalley group} \cite[Cor.~XXIII.5.4]{SGA3.3}, and every split simple algebraic group over a field is obtained from a unique Chevalley group by base change \cite[\S23g]{Milne:AG}.

\begin{lem} \label{skip.aut.alb}
Let $J$ be a Freudenthal algebra of rank 15 or 27 over a ring $k$.  Then
$\bfAut(J)$ is a semisimple $k$-group scheme that is adjoint (i.e., its center is the trivial group scheme).  Its root system has type $\cee_3$ if $J$ has rank 15 and type $\eff_4$ if $J$ has rank 27.  If $J$ is the split Freudenthal algebra, then the group scheme $\bfAut(J)$ is obtained from the Chevalley group over $\ZZ$ by base change.
\end{lem}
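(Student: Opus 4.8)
The plan is to reduce the statement for an arbitrary Freudenthal algebra $J$ over $k$ to a statement about the split Freudenthal algebra over $\ZZ$, and then to prove the latter by identifying $\bfAut$ of that algebra with the Chevalley group over $\ZZ$; the only hard input --- the structure theory of $\bfAut$ of a split Albert (or rank-$15$) algebra in characteristics $2$ and $3$ --- is imported from the literature at the level of geometric fibres.

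\textbf{Reduction to the split case over $\ZZ$.} First note $\bfAut(J)$ is a closed subgroup scheme of $\bfGL(J)$: it is cut out by the conditions of fixing $1_J$ and of stabilising the element $U \in \Pol^2(J, \End_R(J))$, the latter module being finitely generated projective by Lemma \ref{pol}. In particular $\bfAut(J)$ is affine and finitely presented over $k$, and $\bfAut(\,\cdot\,)$ commutes with base change, since $(J \otimes_k S) \otimes_S T = J \otimes_k T$. Now Definition \ref{Frd.def} provides a faithfully flat $S \in \alg{k}$ with $J \otimes_k S \cong J_{0,S}$, the split Freudenthal $S$-algebra of rank $15$ or $27$; and $J_{0,S} = J_0 \otimes_\ZZ S$ for $J_0 = \Her_3(\Mat_2(\ZZ))$ or $\Her_3(\Zor(\ZZ))$ respectively. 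Hence $\bfAut(J) \times_k S \cong \bfG_0 \times_\ZZ S$, where $\bfG_0 := \bfAut(J_0)$, so $\bfAut(J)$ is a form of $\bfG_0 \times_\ZZ k$ trivialised by the cover $S$. The properties ``smooth'', ``connected fibres'', ``semisimple'', ``adjoint'', and ``root system of geometric fibres of type $\cee_3$/$\eff_4$'' all descend along the faithfully flat map $\Spec S \to \Spec k$ (using that smoothness is fppf-local on the base, that formation of the centre commutes with flat base change, that a scheme admitting a surjection from a connected scheme is connected, and that the type of a geometric fibre is unchanged under extension of algebraically closed fields). So the general statement follows once we prove it for $\bfG_0$ over $\ZZ$, and the last sentence of the lemma is then immediate from $\bfAut(J_{\mathrm{split}}) = \bfG_0 \times_\ZZ k$.

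\textbf{The geometric fibres.} For an algebraically closed field $\bar k$ of \emph{arbitrary} characteristic, $\bfAut(\Her_3(\Zor(\bar k)))$ is a smooth connected semisimple adjoint group of type $\eff_4$, and $\bfAut(\Her_3(\Mat_2(\bar k)))$ is a smooth connected semisimple adjoint group of type $\cee_3$ (concretely $\PGSp_6$). For characteristic $\ne 2,3$ this is classical; the cases of characteristic $2$ and $3$ --- in particular that the derivation algebra of the split rank-$27$ (resp.\ rank-$15$) Freudenthal algebra has dimension $52$ (resp.\ $21$), which together with the identification of the fibre forces smoothness of the group scheme --- are available in the literature (see \cite{Conrad:Z} and the references therein). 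Consequently every geometric fibre of $\bfG_0 \to \Spec \ZZ$ is connected and smooth of the constant dimension $52$ (resp.\ $21$).

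\textbf{Gluing over $\ZZ$.} It remains to deduce that $\bfG_0$ itself is smooth over $\ZZ$; granting this, $\bfG_0$ is a semisimple group scheme over $\ZZ$ whose geometric fibres are of type $\eff_4$ (resp.\ adjoint $\cee_3$), hence by the Chevalley--Demazure classification \cite[Exp.~XXIII]{SGA3.3} it \emph{is} the corresponding Chevalley group. Let $\bfH \subseteq \bfGL(J_0)$ be the schematic closure of the generic fibre $\bfG_0 \times_\ZZ \QQ$; it is a closed subgroup scheme, flat over $\ZZ$, contained in $\bfG_0$, and with the same generic fibre. For each prime $p$, flatness of $\bfH$ over $\ZZ$ gives $\dim(\bfH \times_\ZZ \FF_p) = \dim(\bfG_0 \times_\ZZ \QQ)$; since $\bfG_0 \times_\ZZ \FF_p$ is smooth, connected, and of that same dimension, the identity component of $(\bfH \times_\ZZ \FF_p)_{\mathrm{red}}$ is a smooth connected closed subgroup of full dimension, so it equals $\bfG_0 \times_\ZZ \FF_p$; as the latter is reduced and connected, $\bfH \times_\ZZ \FF_p = \bfG_0 \times_\ZZ \FF_p$. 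Thus $\bfH$ is flat over $\ZZ$ with smooth fibres, hence smooth over $\ZZ$; and the closed immersion $\bfH \hookrightarrow \bfG_0$ is an isomorphism on every fibre with $\bfH$ flat over $\ZZ$, which forces $\bfH = \bfG_0$ by a short commutative-algebra argument (the kernel ideal is $\ZZ$-torsion because the generic fibres agree, divisible by every prime because $\mathcal{O}_{\bfH}$ is torsion-free, and annihilated by an integer because $\mathcal{O}_{\bfG_0}$ is Noetherian --- hence zero). Therefore $\bfG_0$ is smooth over $\ZZ$ with connected fibres, completing the proof.

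\textbf{Expected main obstacle.} The substance is concentrated in the middle step: the fact that $\bfAut$ of the split rank-$27$ (resp.\ rank-$15$) Freudenthal algebra over an algebraically closed field of characteristic $2$ or $3$ is smooth, connected, and of type $\eff_4$ (resp.\ $\cee_3$). The reduction in the first step and the schematic-closure argument in the last are routine.
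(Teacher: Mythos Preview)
Your argument is correct and follows the same three-step strategy as the paper: reduce to the split algebra $J_0$ over $\ZZ$, identify the geometric fibres of $\bfG_0 = \bfAut(J_0)$ by citing the field-case literature (including characteristics $2$ and $3$), and then promote the fibrewise statements to a result about the $\ZZ$-group scheme. The differences are only in packaging. Where the paper invokes \cite[Prop.~6.1]{GanYu:G2} or \cite[Lemma~B.1]{AlsaodyGille} as a black box for the smoothness of $\bfG_0$ over the Dedekind domain $\ZZ$, you unpack essentially the same schematic-closure argument by hand; and where you handle the passage from $\bfG_0 \times_\ZZ k$ to a general $\bfAut(J)$ by a one-line appeal to fppf-locality of the relevant properties, the paper carries out the equivalent fibre-by-fibre verification explicitly.

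One small gap to flag: in your gluing step you say that once $\bfG_0$ is smooth and semisimple over $\ZZ$ with geometric fibres of the given type, the Chevalley--Demazure classification in \cite[Exp.~XXIII]{SGA3.3} identifies it with the Chevalley group. That classification applies to \emph{split} reductive group schemes, and ``semisimple over $\ZZ$ with geometric fibres of type $\eff_4$ (resp.\ adjoint $\cee_3$)'' does not by itself force splitness. You need one more ingredient: the generic fibre $\bfG_0 \times_\ZZ \QQ$ is the split group (this is part of the same field-case references you already cite, since $J_0 \otimes \QQ$ is the split Freudenthal $\QQ$-algebra), and then a result such as \cite[Th.~1.4]{Conrad:Z} --- a reductive $\ZZ$-group scheme with split generic fibre is a Chevalley group --- completes the identification. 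The paper makes exactly this move.
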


\begin{proof}
First suppose that $R = \ZZ$ and $J$ is split.   If $J$ has rank 15, then the proof of 14.19 in \cite{Sp:jord} shows that the automorphisms of $J \otimes F$ for every field $F$ are exactly the automorphisms of the algebra $\Mat_6(F)$ with the split symplectic involution, which is the split adjoint group $\PGSp_6$.  For $J$ of rank 27,  $\bfAut(J) \times F$ is split of type $\eff_4$ by 
 \cite[\S6]{Jac:ex} (written for Lie algebras), \cite[Satz 4.11]{Frd:OAO} (written for $\IR$),  \cite[Th.~7.2.1]{Sp:ex} (if $\car F \ne 2, 3$), or  \cite[14.24]{Sp:jord} in general.

Note that $\bfAut(J) \times F$ is connected and smooth as a group scheme over $F$, and $\bfAut(J)$ is finitely presented (because $\ZZ$ is noetherian and $J$ is a finitely generated module), so it follows by \cite[Prop.~6.1]{GanYu:G2} or \cite[Lemma B.1]{AlsaodyGille} that $\bfAut(J)$ is smooth as a scheme over the Dedekind domain $\ZZ$.  In summary, $\bfAut(J)$ is semisimple and adjoint of the specified type.  Moreover, because $\bfAut(J) \times \QQ$ is split, $\bfAut(J)$ is a Chevalley group \cite[Th.~1.4]{Conrad:Z}.

In the case of general $R$ and $J$, let $S \in \alg{R}$ be faithfully flat such that $J \otimes S$ is split.  Then $\bfAut(J) \times S$ is semisimple adjoint of the specified type.  Certainly, $\bfAut(J)$ is also smooth.  Moreover, 
for each $\mfp \in \Spec R$, there is a $\mfq \in \Spec S$ such that $\mfq \cap R = \mfp$.  Then the field of fractions $R(\mfp)$ of $R/\mfp$ embeds in the field $S(\mfq)$, so the algebraic closure $\overline{R(\mfp)}$ includes in the algebraic closure $\overline{S(\mfq)}$.  Because $\bfAut(J) \times \overline{S(\mfq)}$ is adjoint semisimple of the specified type and this property is unchanged by replacing one algebraically closed field by a smaller one, the same holds over $\overline{R(\mfp)}$.  Since this holds for every $\mfp$, the claim is verified.
\end{proof}

\begin{rmk}
In case $R$ is a field, the automorphism group of the split Freudenthal algebra of rank 6 or 9 can be deduced in a similar manner, referring to 14.17 and 14.16 in \cite{Sp:jord}.
The automorphism group of the split Freudenthal algebra of rank 9 is $\PGL_3 \rtimes \ZZ/2$.
The automorphism group of the split Freudenthal algebra of rank 6 is the special orthogonal group of the quadratic form $x^2 + y^2 + z^2$, i.e., the group commonly denoted $\mathrm{SO}(3)$.  In particular, it is not smooth when $R$ is a field of characteristic 2 and indeed one can give examples of Freudenthal algebras of rank 6 over a field of characteristic 2 that are not split by any \'etale cover.  
\end{rmk}

For $J$, $J_0$ Jordan $R$-algebras, we define $\Iso(J, J_0)$ to be the set of $R$-linear isomorphisms $J \to J_0$ and $\bfIso(J, J_0)$ to be the corresponding functor from $\alg{R}$ to sets defined by $S \mapsto \Iso(J \otimes S, J_0 \otimes S)$.  If $J$ and $J_0$ become isomorphic over a faithfully flat $S \in \alg{R}$, then $\bfIso(J, J_0)$ is naturally an $\bfAut(J_0)$-torsor in the fpqc topology. 

The statement of the following result is similar to statements over a field that can be found in \cite{SeCG}.  Its proof amounts to combining the lemma with the general machinery of descent.

\begin{thm} \label{skip.F4.alb}
Let $J_0$ be a Freudenthal $R$-algebra of rank $r = 15$ or 27.  In the diagram
\[
\xymatrix{*+[F]{\parbox{1.2in}{Isomorphism classes of rank $r$ Freudenthal $R$-algebras}} \ar[rr]^{J \mapsto \bfAut(J)}
\ar[dr]_*!/_4pt/{\labelstyle {J \mapsto \bfIso(J, J_0)}} && *+[F]{\parbox{1.4in}{Isomorphism classes of adjoint semisimple $R$-group schemes of type $\cee_3$ ($r = 15$) or $\eff_4$ ($r = 27$)}} \ar[dl]^*!/^6pt/{\labelstyle {\bfG \mapsto \bfIso(\bfG, \bfAut(J_0))}} \\
&H^1(R, \bfAut(J_0))}
\]
all arrows are bijections that are functorial in $R$.
\end{thm}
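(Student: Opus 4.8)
The plan is to realize all three maps as instances of the standard dictionary between twisted forms of a fixed object and its non-abelian cohomology, with Lemma~\ref{skip.aut.alb} supplying the structural input. Set $\bfG_0 := \bfAut(J_0)$, which by Lemma~\ref{skip.aut.alb} is a smooth affine $R$-group scheme that is semisimple and adjoint of type $\cee_3$ if $r = 15$ and of type $\eff_4$ if $r = 27$. The first thing I would record is that, because the Dynkin diagrams $\cee_3$ and $\eff_4$ have no nontrivial automorphisms and $\bfG_0$ is adjoint, the canonical sequence $1 \to \bfG_0 \to \bfAut(\bfG_0) \to \underline{\mathrm{Out}}(\bfG_0) \to 1$ collapses to a canonical isomorphism $\bfAut(\bfAut(J_0)) \cong \bfG_0$ given by inner automorphisms. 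Consequently a torsor under $\bfAut(\bfAut(J_0))$ is the same as a $\bfG_0$-torsor, and the two downward maps genuinely land in the single set $H^1(R, \bfG_0)$.

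Next I would verify that the maps are well defined, which is where the hypotheses are used. Any two rank-$r$ Freudenthal $R$-algebras become isomorphic over a common faithfully flat cover, since by Definition~\ref{Frd.def} each becomes isomorphic over such a cover to the split algebra of rank $r$; hence $\bfIso(J, J_0)$ is a torsor under $\bfG_0$, which is smooth, so it defines a class in $H^1(R, \bfG_0)$ depending only on the isomorphism class of $J$. Likewise, any two adjoint semisimple $R$-group schemes of type $\cee_3$ (resp.\ $\eff_4$) are \'etale-locally isomorphic to the split Chevalley group by the classification of semisimple group schemes \cite{SGA3.3}, so $\bfIso(\bfG, \bfAut(J_0))$ is an $\bfAut(\bfAut(J_0)) \cong \bfG_0$-torsor and defines a class in $H^1(R, \bfG_0)$ depending only on the class of $\bfG$. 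To see that the triangle commutes, I would observe that conjugation $\phi \mapsto \bigl(\psi \mapsto \phi \psi \phi^{-1}\bigr)$ defines a morphism of fppf sheaves $\bfIso(J, J_0) \to \bfIso(\bfAut(J), \bfAut(J_0))$ which is $\bfG_0$-equivariant under the identification above; since any morphism of torsors under the same group is an isomorphism, this gives $\bfIso(\bfAut(J), \bfAut(J_0)) \cong \bfIso(J, J_0)$, which is exactly the statement that $J \mapsto \bfIso(J, J_0)$ factors through $J \mapsto \bfAut(J)$.

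Given the commuting triangle, it is enough to prove that $J \mapsto \bfIso(J, J_0)$ and $\bfG \mapsto \bfIso(\bfG, \bfAut(J_0))$ are bijections; the top map is then the composite of the first with the inverse of the second. For surjectivity of $J \mapsto \bfIso(J, J_0)$: given a $\bfG_0$-torsor $\bfX$ trivialized by an fppf cover $S \in \alg{R}$, the descent datum it determines acts on $J_0 \otimes S$, and I would descend this $S$-algebra down to $R$ --- the underlying finitely generated projective module descends by fppf descent, the unit element descends as a global section, and the $U$-operator descends because $\Pol^2$ commutes with faithfully flat base change (Lemma~\ref{pol}\eqref{pol.base}); the resulting para-quadratic $R$-algebra is a rank-$r$ Freudenthal algebra since that property is fppf-local, and its $\bfIso$-torsor is $\bfX$ by construction. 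For injectivity: given an isomorphism $\theta$ of $\bfG_0$-torsors $\bfIso(J, J_0) \to \bfIso(J', J_0)$, I would choose an fppf $S$ carrying a point $\phi$ of $\bfIso(J, J_0)$ and note that $\theta(\phi)^{-1} \circ \phi \colon J \otimes S \to J' \otimes S$ has equal pullbacks to $S \otimes_R S$, because $\theta$ is $R$-defined and $\bfG_0$-equivariant, hence descends to an isomorphism $J \cong J'$. The very same two arguments, now with effective fppf descent for affine group schemes in place of descent for algebras, and with \cite{SGA3.3} in place of Definition~\ref{Frd.def}, show $\bfG \mapsto \bfIso(\bfG, \bfAut(J_0))$ is a bijection. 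Functoriality in $R$ is then automatic, since $\bfAut$, $\bfIso$, and base change of torsors all commute with base change along a homomorphism $R \to R'$.

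I expect the main obstacle to lie in the bijectivity of the two downward maps, and more precisely in its three non-formal ingredients: that the fibered categories of rank-$r$ Freudenthal algebras and of adjoint semisimple group schemes of type $\cee_3$ or $\eff_4$ satisfy effective fppf descent (for the former this reduces to Lemma~\ref{pol}\eqref{pol.base} for the $U$-operator, for the latter to descent of affine morphisms); that ``being a rank-$r$ Freudenthal algebra'' and ``being adjoint semisimple of the prescribed type'' are fppf-local conditions, so that the twists constructed above stay in the category; and that any two objects of a given kind become isomorphic over a common fppf cover --- immediate from Definition~\ref{Frd.def} for Freudenthal algebras, but for group schemes resting on the \'etale-local splitness of semisimple group schemes from \cite{SGA3.3}. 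The remainder is formal torsor bookkeeping, the only real pitfall being to keep the left/right conventions for the $\bfAut(J_0)$-action consistent.
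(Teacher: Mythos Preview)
Your proposal is correct and takes essentially the same approach as the paper: both reduce the theorem to the general descent/torsor formalism, with the only non-formal inputs being that Freudenthal algebras are fppf-locally split (Definition~\ref{Frd.def}) and that semisimple group schemes are \'etale-locally split \cite[Cor.~XXIV.4.1.6]{SGA3.3}. You are in fact more explicit than the paper about one point it leaves implicit --- the identification $\bfAut(\bfG_0) \cong \bfG_0$ via inner automorphisms (since $\cee_3$ and $\eff_4$ have no diagram automorphisms), which is what makes the right diagonal arrow genuinely land in $H^1(R, \bfG_0)$.
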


\begin{proof}
The facts that the arrows are well defined, the diagram commutes, and the diagonal arrows are injective are general feature of the machinery of descent.  The lower left arrow is surjective because every Freudenthal algebra is split by some faithfully flat $R$-algebra by definition.  The lower right arrow is surjective because every semisimple group scheme is split by some faithfully flat $R$-algebra (even an \'etale cover) \cite[Cor.~XXIV.4.1.6]{SGA3.3}.  
\end{proof}

The machinery of descent shows a more refined statement, where each of the boxes in the theorem are replaced by groupoids and the arrows are equivalences of groupoids.  In that statement, the bottom box is replaced by the groupoid of $\bfAut(J_0)$-torsors and the diagonal arrows are provided by \cite[p.~151, Th.~III.2.5.1]{Giraud}.

In the theorem, the set $H^1(R, \bfAut(J_0))$ is naturally a pointed set and the 
bijections are actually of pointed sets, where the distinguished elements are $J_0$ in the upper left and $\bfAut(J_0)$ in the upper right.

In case $R$ is a field of characteristic different from 2, 3 and $r = 27$, the theorem goes back to \cite{Hij}.  Or see \cite[26.18]{KMRT}.

\begin{cor}
For each Freudenthal $R$-algebra $J$ of rank 15 or 27, there is an \'etale cover $S \in \alg{R}$ such that $J \otimes S$ is a split Freudenthal algebra.
\end{cor}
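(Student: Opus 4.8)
The plan is to deduce this directly from Theorem~\ref{skip.F4.alb} and the smoothness recorded in Lemma~\ref{skip.aut.alb}, without any new computation. Fix the split Freudenthal $R$-algebra $J_0$ of rank $r$. By Theorem~\ref{skip.F4.alb}, the isomorphism class of $J$ corresponds to a class $\xi \in H^1(R, \bfAut(J_0))$, namely the class of the torsor $\bfIso(J, J_0)$; by definition, $\xi$ lies in the image of $H^1(S/R, \bfAut(J_0))$ for some faithfully flat $S \in \alg{R}$, but we want to upgrade ``faithfully flat'' to ``\'etale.''

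The key input is that $\bfAut(J_0)$ is smooth (Lemma~\ref{skip.aut.alb}, which in fact says it is semisimple and adjoint). As recalled in the Notation section, for a smooth group scheme the fppf cohomology set $H^1(R, \bfAut(J_0))$ agrees with the \'etale one; concretely, the colimit defining it may be taken over \'etale covers of $R$. Hence $\xi$ already comes from $H^1(S/R, \bfAut(J_0))$ for some \'etale cover $S \in \alg{R}$, which means precisely that the torsor $\bfIso(J, J_0)$ is trivialized by $S$, i.e., $\bfIso(J, J_0)(S) = \Iso(J \otimes S, J_0 \otimes S)$ is nonempty. Therefore $J \otimes S \cong J_0 \otimes S$, so $J \otimes S$ is a split Freudenthal $S$-algebra, as claimed.

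I do not anticipate a real obstacle: all the content sits in Theorem~\ref{skip.F4.alb} and in the smoothness half of Lemma~\ref{skip.aut.alb}. The one point that deserves explicit mention is the standard fact --- already flagged in the Notation section --- that torsors under a smooth affine group scheme are \'etale-locally trivial, which is exactly what lets us replace the faithfully flat cover furnished by the definition of a Freudenthal algebra with an \'etale one. (Alternatively, one could argue on the group side: $\bfAut(J)$ is semisimple by Lemma~\ref{skip.aut.alb}, hence split by an \'etale cover $S$ via \cite[Cor.~XXIV.4.1.6]{SGA3.3}; over such an $S$ both $\bfAut(J) \times S$ and $\bfAut(J_0) \times S$ are the Chevalley group scheme of the relevant type, and injectivity of $J \mapsto \bfAut(J)$ in Theorem~\ref{skip.F4.alb} then forces $J \otimes S \cong J_0 \otimes S$.)
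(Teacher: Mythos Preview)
Your proof is correct and follows essentially the same approach as the paper: both pass to the torsor $\bfIso(J,J_0)$ under the split automorphism group, invoke the smoothness of $\bfAut(J_0)$ from Lemma~\ref{skip.aut.alb}, and conclude that this torsor is trivialized by an \'etale cover. Your parenthetical alternative via \cite[Cor.~XXIV.4.1.6]{SGA3.3} is a pleasant extra, but the main line is the paper's argument verbatim.
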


\begin{proof}
Let $J_0$ be the split Freudenthal $R$-algebra of the same rank as $J$.  The image $\bfIso(J, J_0)$ of $J$ in $H^1(R, \bfAut(J_0))$ is a $\bfAut(J_0)$-torsor.  Since $\bfAut(J_0)$ is smooth (Lemma \ref{skip.aut.alb}), there is an \'etale cover of $R$ that trivializes $\bfIso(J, J_0)$.
\end{proof}

Note that exactly the same kind of argument gives analogues of Lemma \ref{skip.aut.alb} and Theorem \ref{skip.F4.alb} for composition algebras, where $r = 4$ or 8, and the group is of type $\ay_1$ or $\gee_2$ respectively.

%----------------------------------------------------------------------------------------------------------------
%
\section{Generic minimal polynomial of a Freudenthal algebra} \label{genmin.sec}

\subsection*{Polynomials with polynomial-law coefficients}
Let $J$ be a Jordan $R$-algebra, $\Pol(J, R)$ the $R$-algebra of polynomial laws from $J$ to $R$, and $t$ a variable. Consider a polynomial $\bfp(t) = \sum_{i=0}^n f_it^i$ with $f_i \in \Pol(J, R)$ for $0 \leq i \leq n$. For $S \in \alg{R}$, $x \in J \otimes S$, we have $\bfp(t,x) := \sum_{i=0}^nf_{iS}(x)t^i \in S[t]$, and we define
\[
\bfp(x,x) := \sum_{i=0}^nf_{iS}(x)x^i \in J \otimes S.
\]
The algebra $J$ is said to \emph{satisfy $\bfp$} if $\bfp(x,x) = 0 = (t\bfp)(x,x)$ for all $x \in J \otimes S$, $S \in \alg{R}$. Note that the second equation follows from the first if $2$ is invertible in $R$ but not in general, see Remark~\ref{loc.lin}.

\subsection*{The generic minimal polynomial} 
Let $J := \Her_3(C, \Gamma)$ as in Example \ref{her.quad}.
With a variable $t$ we recall from \eqref{cub.eq} that $J$ satisfies the monic polynomial
\begin{equation}
\label{gen.min} \bfm_J = t^3 - \Tr_J\cdot t^2 + S_J\cdot t - N_J \in \Pol(J, R)[t].
\end{equation}
More precisely, by \cite[2.4(b)]{Loos:genalg}, $J$ is generically algebraic of degree $3$ in the sense of \cite[2.2]{Loos:genalg} and $\bfm_J$ is the \emph{generic minimal polynomial} of $J$, i.e., the unique monic polynomial in $\Pol(J,R)[t]$ of minimal degree satisfied by $J$ \cite[2.7]{Loos:genalg}. It follows that the Jordan algebra $J$ determines the polynomial $\bfm_J$ uniquely. In particular, the \emph{generic norm} $N_J$, the \emph{generic trace} $T_J$ (or $\Tr_J$) and, in fact, the cubic norm structure underlying $J$ in the sense of Definition~\ref{cubjord.def} are uniquely determined by $J$ as a Jordan algebra.

By faithfully flat descent, every Freudenthal algebra $J$ has a uniquely determined generic minimal polynomial of the form \eqref{gen.min}, and a uniquely determined underlying cubic norm structure.  We conclude:

\begin{prop} \label{Frd.cubic}
Every Freudenthal algebra $J$ is a cubic Jordan algebra and the bilinear trace form $T_J$ is regular. $\hfill\qed$
\end{prop}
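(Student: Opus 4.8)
The plan is to reduce the statement to the reduced case $\Her_3(C,\Gamma)$ (in fact to the split case) by faithfully flat descent, relying on the machinery already set up in \S\ref{genmin.sec}. By Definition~\ref{Frd.def}, a Freudenthal $R$-algebra $J$ becomes a split Freudenthal algebra $\Her_3(C)$ after a faithfully flat base change $S\in\Ralg$. The discussion preceding the proposition already records that $J\otimes S=\Her_3(C)$ has the generic minimal polynomial $\bfm_{J\otimes S}=t^3-\Tr\cdot t^2+S\cdot t-N\in\Pol(J\otimes S,S)[t]$ of \eqref{gen.min}, coming from its cubic norm structure as in Example~\ref{her.quad}. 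The point of uniqueness (via \cite[2.7]{Loos:genalg}) is that this polynomial descends: since the coefficient polynomial laws are uniquely determined, they are compatible on the double base change $S\otimes_R S$, so by faithfully flat descent for the finitely generated projective modules $\Pol^i(J,R)$ (Lemma~\ref{pol}\eqref{pol.base}) there are unique $f_i\in\Pol(J,R)$ restricting to the coefficients of $\bfm_{J\otimes S}$. These assemble into a monic cubic $\bfm_J\in\Pol(J,R)[t]$, and the identities $\bfm_J(x,x)=0=(t\bfm_J)(x,x)$ hold because they hold after the faithfully flat base change to $S$ (a polynomial law into $J$ is zero iff it is zero after faithfully flat base change). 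Hence $J$ satisfies $\bfm_J$.

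Next I would upgrade ``satisfies a monic cubic'' to ``is a cubic Jordan algebra'' in the sense of Definition~\ref{cubjord.def}. Write $\bfm_J=t^3-\Tr_J\cdot t^2+S_J\cdot t-N_J$; here $N_J\in\Pol^3(J,R)$, $S_J\in\Pol^2(J,R)$, $\Tr_J\in\Pol^1(J,R)$ by homogeneity of the descended coefficients. Define the base point $1_\bfM:=1_J$ and the adjoint by the descent of the formula $x^\sharp=x^2-\Tr_J(x)x+S_J(x)1_J$ from \eqref{sharp.square}; this is a quadratic map $J\to J$ defined over $R$ (again since it descends from $S$, or directly from the powers and the $f_i$). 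One then must check the cubic norm structure axioms \eqref{bilt}, \eqref{lit}, \eqref{cunun}, \eqref{cubad} for $\bfM:=(J,1_J,\sharp,N_J)$. Each of these is an identity among polynomial laws on $J$ (with values in $R$ or in $J$), and each holds after the faithfully flat base change to $S$ because $\Her_3(C)$ is a cubic norm structure (Example~\ref{her.quad}); since a polynomial-law identity that holds after faithfully flat base change holds over $R$, the axioms hold for $\bfM$. Thus $\bfM$ is a cubic norm $R$-structure. It remains to see $J=J(\bfM)$ as a Jordan algebra, i.e.\ that the $U$-operator recovered from $\bfM$ via \eqref{uop.cub} equals the original $U$-operator of $J$; again both sides agree after base change to $S$ (where $J(\bfM)\otimes S=\Her_3(C)=J\otimes S$ as Jordan algebras) and are polynomial laws $J\to\End_R(J)$, so they coincide over $R$. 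Since the underlying module of a Freudenthal algebra is finitely generated projective (Definition~\ref{Frd.def}), condition (ii) in the definition of ``cubic Jordan algebra'' is met, and $J$ is cubic.

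For the regularity of $T_J$: by definition the bilinear trace $T_J$ of $J$ is the descent of the bilinear trace $T_{J\otimes S}$ of $\Her_3(C)$, which by \eqref{bilt.mat} is regular (the sentence after \eqref{bilt.mat} says so, since $n_C$ is regular). Regularity means the adjoint map $J\to J^*$, $x\mapsto T_J(x,\cdot)$, is an isomorphism; this is a map of finitely generated projective $R$-modules which becomes an isomorphism after the faithfully flat base change to $S$, hence is an isomorphism over $R$ by faithfully flat descent (an $R$-linear map between f.g.\ projective modules is an isomorphism iff it is so after a faithfully flat base change). This completes the proof.

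The main obstacle, such as it is, is bookkeeping rather than depth: one must be careful that ``holds after faithfully flat base change $\Rightarrow$ holds over $R$'' is legitimately invoked for each flavor of object --- polynomial-law identities valued in $R$, valued in $J$, and valued in $\End_R(J)$, as well as isomorphy of maps between finitely generated projective modules --- and that the descended data ($\bfm_J$, $\sharp$, the cubic norm structure) is genuinely canonical, so that no cocycle obstruction arises. The cleanest way to package all of this is to note once and for all that every relevant map is a polynomial law between functors $\W(-)$ of finitely generated projective modules, and that such laws form finitely generated projective modules satisfying faithfully flat descent by Lemma~\ref{pol}; then every identity and every isomorphism in sight descends formally. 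No essentially new computation is required beyond what is in Example~\ref{her.quad}.
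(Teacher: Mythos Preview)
Your proposal is correct and takes essentially the same approach as the paper: the paper's ``proof'' is the single sentence immediately preceding the proposition (``By faithfully flat descent, every Freudenthal algebra $J$ has a uniquely determined generic minimal polynomial of the form \eqref{gen.min}, and a uniquely determined underlying cubic norm structure''), together with the regularity of $T_J$ for $\Her_3(C,\Gamma)$ noted after \eqref{bilt.mat}. You have simply unpacked that sentence, making explicit the uses of Lemma~\ref{pol}\eqref{pol.base} for descent of polynomial laws, the uniqueness from \cite[2.7]{Loos:genalg} to kill the cocycle obstruction, and faithfully flat descent of isomorphisms of finitely generated projective modules for the regularity of $T_J$.
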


The preceding discussion shows that, for a Freudenthal $R$-algebra $J$, the Jordan algebra structure of $J$ alone (ignoring that $J$ is a \emph{cubic} Jordan algebra) determines the bilinear form $T_J$.  (For example, the 11 Freudenthal $\RR$-algebras discussed in Example \ref{real.trace} have distinct trace forms and therefore are distinct.)  When $R$ is a field of characteristic $\ne 2, 3$ and $J$ and $J'$ are reduced Freudenthal algebras, Springer proved that the converse also holds, i.e., $J \cong J'$ if and only if $T_J \cong T_{J'}$ \cite[Th.~5.8.1]{Sp:ex}.  We do not use Springer's result in this paper. 

The following result can also be found in \cite[Cor.~18(b)]{Ptr:surv}, based on the different definition of Albert algebra appearing there.

\begin{lem} \label{HOCUJO}
Let $J$ and $J'$ be Freudenthal $R$-algebras.  An $R$-linear map $\phi \!: J \to J'$ is an isomorphism of para-quadratic algebras if and only if $\phi$ is surjective, $\phi(1_J) = 1_{J'}$, and $N_{J'} = N_J \phi$ as polynomial laws.
\end{lem}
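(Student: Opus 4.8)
My plan is to treat the two implications separately. The ``only if'' direction is quick: an isomorphism of para-quadratic algebras is surjective and sends $1_J$ to $1_{J'}$ by definition, and the generic norm of a Freudenthal algebra is an invariant of its Jordan-algebra structure (this is the point of the discussion preceding Proposition~\ref{Frd.cubic}, which rests for reduced algebras on the uniqueness of the generic minimal polynomial \cite[2.7]{Loos:genalg} and in general on faithfully flat descent), so $\phi$ must carry $N_J$ to $N_{J'}$, i.e.\ $N_{J'}(\phi x) = N_J(x)$ for all $x$ --- the condition $N_{J'} = N_J\phi$ in the statement. The substance is the converse, and the organizing principle is that for a Freudenthal algebra the whole cubic norm structure, hence the $U$-operator, is built from the pair $(1_J, N_J)$ along the chain $N_J \rightsquigarrow T_J \rightsquigarrow \sharp \rightsquigarrow U$, each link of which is reversible \emph{because the bilinear trace is regular} (Proposition~\ref{Frd.cubic}). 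So I would show that a surjective $R$-linear $\phi$ with $\phi(1_J) = 1_{J'}$ and $N_{J'}(\phi x) = N_J(x)$ is bijective and intertwines, in turn, $T$, then $\sharp$, then $U$; a bijective homomorphism of para-quadratic algebras is automatically an isomorphism, its set-theoretic inverse being $R$-linear and compatible with the $U$-operators.

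First I would pass from the norm to the trace. Since $\phi$ is linear and $N_{J'}\circ\phi = N_J$ as polynomial laws, extracting coefficients of auxiliary variables yields $(\grd^n_vN_J)(x) = (\grd^n_{\phi v}N_{J'})(\phi x)$ together with the analogous identities for mixed directional derivatives, in every scalar extension. Feeding these into the defining formula \eqref{bilt} for the bilinear trace and using $\phi(1_J) = 1_{J'}$ gives
\[
T_J(x,y) = T_{J'}(\phi x, \phi y)\qquad\text{for all }x,y.
\]
Regularity of $T_J$ then forces $\phi$ to be injective: if $\phi(x) = 0$ then $y\mapsto T_J(x,y) = T_{J'}(0,\phi y)$ is zero, so $x = 0$. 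With the surjectivity hypothesis, $\phi$ is an $R$-module isomorphism, hence so is $\phi \otimes S$ for every $S \in \Ralg$.

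Next I would transport the adjoint. Applying the middle identity of \eqref{cubad} first in $J'$ and then in $J$, and using the derivative relation and the trace identity just obtained, I get
\[
T_{J'}\big(\phi(x^\sharp),\,\phi y\big) \;=\; (\grd_yN_J)(x) \;=\; T_{J'}\big((\phi x)^\sharp,\,\phi y\big)
\]
for all $y$. Because $\phi$ is now known to be bijective, $\phi y$ ranges over all of $J'$ (and over all of $J'\otimes S$ after base change), so regularity of $T_{J'}$ gives $\phi(x^\sharp) = (\phi x)^\sharp$; linearity of $\phi$ then also gives $\phi(x\times y) = \phi x \times \phi y$ for the polarizations. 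Finally, substituting the trace and adjoint compatibilities into the cubic formula $U_xy = T_J(x,y)x - x^\sharp \times y$ of \eqref{uop.cub} yields $\phi(U_xy) = U'_{\phi x}\phi y$. Thus $\phi$ is a bijective homomorphism of para-quadratic algebras, i.e.\ an isomorphism, finishing the converse.

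The place I expect to need the most care is the bookkeeping around regularity: one must invoke Proposition~\ref{Frd.cubic} to know that \emph{both} $T_J$ and $T_{J'}$ are regular, and use surjectivity of $\phi$ at precisely the right moments --- to pass from ``$T_{J'}(a,\phi y) = T_{J'}(b,\phi y)$ for all $y$'' to ``$a = b$'' in the adjoint step, and tacitly in the injectivity argument. Apart from that there are no real obstacles: every identity invoked is either literally an identity of polynomial laws, hence stable under base change, or is obtained by rerunning the regularity argument over an arbitrary $S \in \Ralg$, so no separate base-change analysis is needed; the only external input is the uniqueness of the generic minimal polynomial of a Freudenthal algebra used in the ``only if'' direction, recalled in \S\ref{genmin.sec}.
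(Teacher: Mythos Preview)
Your proof is correct and follows essentially the same route as the paper's: both pass from $N$ to $T$ via directional derivatives and $\phi(1_J)=1_{J'}$, then from $T$ to $\sharp$ via \eqref{cubad} and regularity of the bilinear trace, then to $U$ via \eqref{uop.cub}, with injectivity coming from regularity of $T_J$. The only cosmetic difference is ordering---you prove injectivity immediately after obtaining the trace identity, while the paper defers it to the end---and you are slightly more explicit about where surjectivity is used in the adjoint step.
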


\begin{proof}  The ``only if'' direction follows from the uniqueness of the generic minimal polynomial as in \eqref{gen.min}, so we show ``if''.  The equality $N_{J'} = N_J \phi$ of polynomial laws and the definition of the directional derivative in \S\ref{directional.sec} gives formulas such as 
\[
\grd_y N_J(x) = \grd_{\phi(y)} N_{J'}(\phi(x)).
\]
Since $\phi(1_J) = 1_{J'}$, the definition of the bilinear forms $T_J$ and $T_{J'}$ in \eqref{bilt} give:
\[
T_{J'}(\phi (x), \phi (y)) = T_J(x, y)
\]
for all $x, y$.  Therefore, on the one hand we have
\[
\grd_y N_J(x) = T_J(x^\sharp, y) = T_{J'}(\phi(x^\sharp), \phi(y)).
\]
On the other hand, we have
\[
\grd_y N_J(x) = \grd_{\phi(y)} N_{J'}(\phi(x)) = T_{J'}((\phi(x))^\sharp, \phi(y)).
\]
Therefore, $\phi(x^\sharp) = \phi(x)^\sharp$ for all $x$.
In summary, $\phi$ commutes with $\sharp$ and preserves $T_J$.  Therefore, by \eqref{uop.cub}, $\phi$ is a homomorphism of Jordan algebras.

Suppose that $x$ is in $\ker \phi$.  Then for all $y \in J$, $T_J(x, y) = T_{J'}(\phi(x), \phi(y)) = 0$, so $x = 0$ since the bilinear form $T_J$ is regular.  Since $\phi$ is both surjective and injective, it is an isomorphism.
\end{proof}

\begin{eg} \label{Mat3.9}
We claim that $\Her_3(R \times R)$, the split Freudenthal algebra of rank 9,  is isomorphic to $\Mat_3(R)^+$.  To see this, define $\pi_i \!: R \times R \to R$ to be the projection on the $i$-th coordinate and define $\phi \colon \Her_3(R \times R) \to \Mat_3(R)^+$ by sending 
\[
\sbasjord \mapsto \left( \begin{smallmatrix}
\alpha_1 & \pi_1(c_3) & \pi_2(c_2)\\
\pi_2(c_3) & \alpha_2 & \pi_1(c_1) \\
\pi_1(c_2) & \pi_2(c_2) & \alpha_3 
\end{smallmatrix} \right).
\]
This map is obviously $R$-linear and surjective and sends the identity to the identity.  One checks directly that $\phi$ preserves norms, i.e., that $\det(\phi(x))$ equals $N(x)$ according to \eqref{Frd.norm}.  Because $\Her_3(R \times R)$ and $\Mat_3(R)^+$ are both cubic Jordan algebras with regular trace bilinear forms, the proof of the ``if'' direction of Lemma \ref{HOCUJO} shows that $\varphi$ is an isomorphism of Jordan algebras.
\end{eg}

%%%%%%%%%%%%%%%%%%%%%%%%%%%%%%%%%%%%%%%%%%%%%%%%%%%%%%%%%%
\section{Basic classification results for Albert algebras}

In the case where $R$ is a field such as the real numbers, a finite field, a local field, or a global field, one can find in many places in the literature classifications of Albert algebras proved using techniques involving algebras as in \cite[\S5.8]{Sp:ex}.  For such an $R$, groups of type $\eff_4$ can be classified using techniques from algebraic groups, such as in \cite[Ch.~6]{PlatRap} or \cite{Gille:sc2}.  The two approaches are equivalent by Theorem \ref{skip.F4.alb}.

\begin{eg}[Albert algebras over $\RR$] \label{alb.RR}
Up to isomorphism, there are three Albert $\RR$-algebras, namely the split one $\Her_3(\Zor(\RR))$, $\Her_3(\OO, \qform{1, 1, -1})$, and $\Her_3(\OO)$.  Rather than proving this in the language of Jordan algebras as in \cite[Th.~10]{AlbJac}, one may leverage Theorem \ref{skip.F4.alb} as follows.  The three algebras are pairwise non-isomorphic because their trace forms are (Example \ref{real.trace}).  At the same time, a computation in the Weyl group of $\eff_4$ 
as in \cite[\S{III.4.5}]{SeCG}, \cite[14.1]{BorovoiEvenor}, or \cite[Table 3]{AdamsTaibi} shows that $H^1(\RR, \bfAut(\Her_3(\Zor(\RR))))$ has three elements.  That is, there are exactly three isomorphism classes of simple affine group schemes over $\RR$ of type $\eff_4$, so we have found all of them.
\end{eg}

\begin{eg}[Albert algebras over global fields] \label{alb.global}
Let $A$ be an Albert $F$-algebra for $F$ a global field.  Put $\Omega$ for the (finite) set of inequivalent embeddings $\omega \colon F \hookrightarrow \RR$.  Since $\bfAut(A)$ is a simple and simply connected affine group scheme, the natural map 
\[
H^1(F, \bfAut(A)) \xrightarrow{\prod \omega} \prod_{\omega \in \Omega} H^1(\RR, \bfAut(A))
\]
is a bijection by \cite{Hrdr:2} or \cite[p.~286, Th.~6.6]{PlatRap}.  Because $H^1(\RR, \bfAut(A))$ has three elements by the preceding example, there are exactly $3^{|\Omega|}$ isomorphism classes of Albert $F$-algebras.  This result, for number fields and with a proof in the language of Albert algebras, dates back to \cite[p.~417, Cor.~of Th.~12]{AlbJac}.

The same argument goes through for octonion algebras, and one finds that there are $2^{|\Omega|}$ isomorphism classes of octonion $F$-algebras.  This result, for number fields and with a proof in the language of octonion algebras, dates back to \cite[p.~400]{Zorn}.

In the special case where $F$ has a unique real embedding (e.g., $F = \QQ$), the two isomorphism classes of octonion algebras are $\Zor(F)$ and $\oct \otimes F$, and the three isomorphism classes of Albert algebras are  $\Her_3(\Zor(F))$ and $\Her_3(\oct \otimes F, \qform{1, 1, \pm 1})$.
\end{eg}

Below, we will focus our attention on classification results in the case where $R$ is not a field.  We translate known results about cohomology of affine group schemes into the language of Albert algebras.

\begin{prop} \label{G2F4.dvr}
If $R$ is (1) a complete discrete valuation ring whose residue field is finite or (2) a finite ring, then every Freudenthal $R$-algebra of rank 15 or 27 and every quaternion or octonion $R$-algebra is split.
\end{prop}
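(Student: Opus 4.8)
The plan is to reduce everything to the vanishing of a non-abelian $H^1$ over a finite field, via Lang's theorem. By Theorem~\ref{skip.F4.alb} (for rank $15$ and $27$) together with its analogue for composition algebras recorded after that proof (for rank $4$ and $8$), the isomorphism classes of the objects in the statement are classified by $H^1(R, \bfG)$, where $\bfG$ is the relevant group scheme over $R$: adjoint semisimple of type $\cee_3$ or $\eff_4$, or of type $\ay_1$ or $\gee_2$. In each case $\bfG$ is the base change of a Chevalley group over $\ZZ$ (Lemma~\ref{skip.aut.alb} and its composition-algebra analogue), so $\bfG$ is smooth with connected fibres and $H^1(R, \bfG)$ may be computed in the \'etale topology. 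The split object is the distinguished element of this pointed set, so it suffices to prove $H^1(R, \bfG) = 1$ in both cases.

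First I would dispose of case~(2) by reducing to a finite \emph{local} ring: a finite commutative ring is a finite product $R = \prod_i R_i$ of finite local rings, and both $H^1(R, \bfG) = \prod_i H^1(R_i, \bfG)$ and the classification of Freudenthal (resp.\ composition) $R$-algebras respect this decomposition, splitness being detected factor by factor. A finite local ring is Artinian local, hence complete, hence Henselian, with finite residue field. In case~(1) a complete discrete valuation ring is likewise Henselian local with finite residue field. So in both cases I am reduced to the following situation: $R$ is a Henselian local ring with finite residue field $k$, and $\bfG$ is smooth over $R$ with connected fibres; show $H^1(R, \bfG) = 1$.

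For this, let $E$ be a $\bfG$-torsor over $R$. It is an affine $R$-scheme, smooth over $R$ because $\bfG$ is. Its special fibre $E \times_R k$ is a torsor under the smooth connected group $\bfG \times_R k$ over the finite field $k$, hence trivial by Lang's theorem; in particular $E(k) \neq \emptyset$. Since $E \to \Spec R$ is smooth and $R$ is Henselian local, the reduction map $E(R) \to E(k)$ is surjective, so $E(R) \neq \emptyset$ and $E$ is trivial. Hence $H^1(R, \bfG) = 1$, and the split object is the unique one up to isomorphism.

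I expect the main work to be bookkeeping rather than a hard computation: assembling the three ingredients (that $\bfAut$ of a split Freudenthal or composition algebra is a Chevalley group, already available from Lemma~\ref{skip.aut.alb} and the remark following Theorem~\ref{skip.F4.alb}; that a smooth scheme over a Henselian local ring acquires an $R$-point from a residue-field point, a form of Hensel's lemma; and Lang's theorem for connected groups over finite fields), and checking that the relevant group schemes really do have connected fibres (true for adjoint semisimple group schemes and for the Chevalley groups of type $\ay_1$ and $\gee_2$) and that the product decomposition in case~(2) is compatible with the classification. No delicate characteristic-$2$ or $3$ phenomena intervene, since the pertinent automorphism group schemes are smooth in these ranks.
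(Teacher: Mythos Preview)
Your argument is correct and slightly cleaner than the paper's. The paper reduces to the same $H^1$ vanishing statement via Theorem~\ref{skip.F4.alb} and its composition-algebra analogue, but then treats the two cases separately: for case~(1) it simply cites \cite[Prop.~3.10]{Conrad:Z}, and for case~(2) it proves a dedicated lemma (Lemma~\ref{skip.pr.finite}) by induction on the smallest $m$ with $\Nil(R)^m = 0$, using smoothness to lift points through each square-zero extension $R/\idl^{m-1} \to R/\idl^m$ and Lang's theorem at the base $m=1$. You instead observe that both cases fall under ``Henselian local with finite residue field'' and invoke the standard Hensel lifting for smooth schemes in one stroke. Your route is more uniform and self-contained (no external citation for the DVR case); the paper's nilpotent-filtration argument for case~(2) is essentially the same mechanism unpacked by hand, and has the minor virtue of not needing to name the Henselian property.
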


\begin{proof}
In view of Theorem \ref{skip.F4.alb} and its analogue for composition algebras, it suffices to prove that $H^1(R, \bfG) = 0$ for $\bfG$ a simple $R$-group scheme of type $\eff_4$ or $\cee_3$ obtained by base change from a Chevalley group over $\ZZ$.  In case (1), this is \cite[Prop.~3.10]{Conrad:Z}.  In case (2), we apply the following lemma.
\end{proof}

\begin{lem} \label{skip.pr.finite}
If $R$ is a finite ring and $\bfG$ is a smooth connected $R$-group scheme, then $H^1(R, \bfG) = 0$.
\end{lem}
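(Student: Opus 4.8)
The plan is to descend to the residue field and apply Lang's theorem. First, I would note that $R$ is automatically \emph{local}: the structure morphism $\bfG \to \Spec R$ has the identity section, hence is surjective, so any disconnection of $\Spec R$ would disconnect $\bfG$; since $\bfG$ is connected and $R$ is finite (hence Artinian, hence a finite product of Artinian local rings), $\Spec R$ is connected and $R$ is local. Let $\mfm$ denote its maximal ideal, which is nilpotent as $R$ is Artinian, and put $k := R/\mfm$, a finite field.

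An element of $H^1(R,\bfG)$ is the class of a $\bfG$-torsor $P$ over $R$, and such a $P$ is the trivial torsor precisely when $P(R) \ne \emptyset$. Since $\bfG$ is smooth and $P$ is fppf-locally isomorphic to $\bfG$ over $\Spec R$, faithfully flat descent shows $P$ is smooth over $\Spec R$ (it is an affine scheme when $\bfG$ is affine, as in all our applications, and an algebraic space in general; the argument is the same). I would then base change to $k$: the fiber $P \times_R k$ is a torsor under $\bfG_k := \bfG \times_R k$. Crucially, $\bfG_k$ is still connected, because $\Spec k \hookrightarrow \Spec R$ is a closed immersion cut out by the nilpotent ideal $\mfm$, hence a homeomorphism on underlying topological spaces, so $|\bfG_k| = |\bfG|$. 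Being a smooth connected group scheme over the field $k$, $\bfG_k$ is of finite type, i.e., a connected algebraic group over the finite field $k$; Lang's theorem \cite[Ch.~III, \S2.3]{SeCG} therefore gives $H^1(k,\bfG_k) = 0$. In particular $P \times_R k$ is trivial, so $P(k) = (P \times_R k)(k) \ne \emptyset$.

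It remains to lift a $k$-point of $P$ to an $R$-point. The closed immersion $\Spec k \hookrightarrow \Spec R$ is a nilpotent thickening --- a finite composite of square-zero extensions, via the filtration by the $R/\mfm^{i+1} \twoheadrightarrow R/\mfm^i$ --- so the infinitesimal lifting criterion for the smooth, hence formally smooth, morphism $P \to \Spec R$ lets us lift the given point of $P(k)$ step by step to a point of $P(R)$; equivalently, one invokes that $R$ is Henselian local and $P \to \Spec R$ is smooth. Thus $P$ is trivial, and since $P$ was an arbitrary $\bfG$-torsor, $H^1(R,\bfG) = 0$.

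I do not expect a serious obstacle here. The two points that need a moment's care are (i) checking that connectedness passes from $\bfG$ to its closed fiber $\bfG_k$, so that Lang's theorem applies --- this is exactly where nilpotence of $\mfm$ is used --- and (ii) phrasing the lifting step so that it is indifferent to whether $P$ is a scheme or merely an algebraic space, which for the group schemes of type $\eff_4$ and $\cee_3$ relevant to Proposition~\ref{G2F4.dvr} is a non-issue, since those are affine and their torsors are affine schemes.
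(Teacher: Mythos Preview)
Your proof is correct and follows the same approach as the paper's: apply Lang's theorem over the residue field and lift points along the nilpotent thickening via smoothness. The paper organizes the lifting as an induction on the nilpotency index of $\Nil(R)$ rather than passing directly to $k$, and begins by decomposing $R$ as a product of connected (hence local) rings --- a step your observation that connectedness of $\bfG$ forces $\Spec R$ connected renders superfluous under the literal reading of ``connected''.
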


\begin{proof}
If $R$ is not connected, then it is a finite product $R = \prod R_i$ where each ring $R_i$ is finite, so $H^1(R, \bfG) = \prod H^1(R_i, \bfG \times R_i)$.   Therefore it suffices to assume that $R$ is connected.

Suppose $\bfX$ is a $\bfG$-torsor.  Our aim is to show that $\bfX$ is the trivial torsor, i.e., $\bfX(R)$ is nonempty.  Put $\idl$ for the nil radical $\Nil(R)$ of $R$.  Because $R$ is finite, there is some minimal $m \ge 1$ such that $\idl^m = 0$.  We proceed by induction on $m$.  If $m = 1$, then $R$ is reduced and connected, so it is a finite field and $H^1(R, \bfG) = 0$ by Lang's Theorem.  For the case $m \ge 2$, put $I := \idl^{m-1}$.  The ring $R/I$ has $\Nil(R/I)^{m-1} = (\Nil(R)/I)^{m-1} = 0$, so by induction $\bfX(R/I)$ is nonempty.  On the other hand, $I^2 = \idl^{2m-2} = \idl^m \cdot \idl^{m-2} = 0$ and $\bfX$ is smooth, so the natural map $\bfX(R) \to \bfX(R/I)$ is surjective.
\end{proof}

\begin{eg} \label{dedekind.eg1}
Suppose $R$ is a Dedekind ring and write $F$ for its field of fractions.  For $\bfG$ a Chevalley group of type $\gee_2$, $\eff_4$, or $\ee_8$, the map $H^1(R, \bfG) \to H^1(F, \bfG)$ has zero kernel \cite[Satz 3.3]{Hrdr:Ded}.  Consequently,  \emph{if $A$ is an Albert or octonion $R$-algebra and $A \otimes F$ is split, then the $R$-algebra $A$ is split.}

In particular, if $F$ is a global field with no real embeddings, then every Albert or octonion $F$-algebra is split, so every Albert or octonion $R$-algebra is split.
\end{eg}

In the case where $F$ is a number field with a real embedding, we provide the following partial result, which relies on Example \ref{alb.RR}.  %An Albert (resp., octonion) $\RR$-algebra $A$ has no nonzero nilpotent elements if and only if $\bfAut(A)$ is the compact real form, if and only if $\bfAut(A)$ is an anisotropic group, the last equivalence being \cite[\S24.6]{Borel}.

\begin{prop} \label{skip.numb.indef}
Suppose $F$ is a number field and $R$ is a localization of its ring of integers at finitely many primes.  If $A$ is an Albert (resp., octonion) $F$-algebra such that $A \otimes \RR$ is not isomorphic to $\Her_3(\OO)$ (resp., $\OO$) for every embedding $F \hookrightarrow \RR$, then there is an Albert (resp., octonion) $R$-algebra $B$ such that $B \otimes F \cong A$ and $B$ is uniquely determined up to $R$-isomorphism.
\end{prop}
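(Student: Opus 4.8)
The plan is to recast the statement cohomologically using Theorem~\ref{skip.F4.alb}, and then to play the local triviality of Proposition~\ref{G2F4.dvr} against strong approximation, with the hypothesis on $A\otimes\RR$ entering only at the end to guarantee that strong approximation applies. Write $\bfG$ for the Chevalley group over $R$ of type $\eff_4$ (resp.\ $\gee_2$); by Theorem~\ref{skip.F4.alb} and its analogue for composition algebras, isomorphism classes of Albert (resp.\ octonion) algebras over $R$ are classified by $H^1(R,\bfG)$, and likewise over $F$. The octonion case will turn out to be nearly trivial: an octonion algebra over a nonarchimedean local field is split (its norm is an isotropic, hence hyperbolic, $3$-fold Pfister form), and $A\otimes\RR\not\cong\OO$ forces $A$ to be split at every real place; since there are only two octonion $\RR$-algebras, the Hasse-principle bijection of Example~\ref{alb.global} shows $A$ is already split over $F$, so one takes $B:=\Zor(R)$ and deduces uniqueness from Example~\ref{dedekind.eg1} (zero kernel for the Chevalley group of type $\gee_2$). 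The rest of the plan addresses the Albert case.

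For existence I would first note that $A\otimes F_v$ is split at every finite place $v$ of $F$: this is Kneser's vanishing $H^1(F_v,\bfG)=1$ for the simply connected group $\bfG$ over the $p$-adic field $F_v$. Consequently, spreading $[A]\in H^1(F,\bfG)$ out over a dense open $\Spec U\subseteq\Spec R$, the restriction $[A]\otimes F_v$ is trivial at each of the finitely many closed points $v$ of $\Spec R$ not in $U$, so it extends over the completion $\hat R_v$ by the trivial torsor. Gluing these to the torsor over $U$ along the identity isomorphisms over the $F_v$ --- the Beauville--Laszlo fracture square for the Dedekind ring $R$, valid for torsors under the smooth affine group $\bfG$ --- produces a $\bfG$-torsor over $R$ with generic fibre $[A]$, hence an Albert $R$-algebra $B$ with $B\otimes F\cong A$. (No hypothesis on $A\otimes\RR$ is used here.)

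For uniqueness I would fix this $B$, set $\mathbf{H}:=\bfAut(B)$ --- a simply connected semisimple $R$-group scheme of type $\eff_4$ with $\mathbf{H}\otimes F\cong\bfAut(A)$ --- and recall that, by the standard twisting argument, Albert $R$-algebras $B'$ with $B'\otimes F\cong B\otimes F$ correspond up to $R$-isomorphism to $\ker\!\bigl(H^1(R,\mathbf{H})\to H^1(F,\mathbf{H})\bigr)$, the basepoint being $B$. Proposition~\ref{G2F4.dvr} shows that every Albert $\hat R_v$-algebra is split for $v$ finite, so $\mathbf{H}\otimes\hat R_v$ is the Chevalley group and $H^1(\hat R_v,\mathbf{H})=1$; the fracture-square (Mayer--Vietoris) description of $H^1$ over the Dedekind ring $R$ then identifies the above kernel with the class set $\mathrm{cl}_S(\mathbf{H})=\mathbf{H}(F)\backslash\mathbf{H}(\mathbb{A}_F^S)/\prod_{v\notin S}\mathbf{H}(\hat R_v)$, where $S$ consists of the archimedean places of $F$ together with the finite places that are not maximal ideals of $R$. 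Finally the hypothesis enters: by Examples~\ref{real.trace} and \ref{alb.RR}, among the real Albert algebras only $\Her_3(\OO)$ has compact automorphism group (it preserves the positive-definite trace form $T_{\Her_3(\OO)}$), so $A\otimes F_v\not\cong\Her_3(\OO)$ makes $\bfAut(A\otimes F_v)$ noncompact at each real $v$ --- and at a complex place any nontrivial semisimple group is noncompact anyway --- whence the $F$-simple simply connected group $\mathbf{H}$ is isotropic at some place of $S$. Strong approximation (see \cite[\S7.4]{PlatRap}) then makes $\mathrm{cl}_S(\mathbf{H})$ trivial, so the kernel is trivial and $B$ is unique up to $R$-isomorphism.

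The crux is the uniqueness step, and within it two points. The first is the fracture-square identification of $\ker\!\bigl(H^1(R,\mathbf{H})\to H^1(F,\mathbf{H})\bigr)$ with the class set $\mathrm{cl}_S(\mathbf{H})$ of the \emph{possibly non-split} group $\mathbf{H}=\bfAut(B)$, where Proposition~\ref{G2F4.dvr} is exactly what kills the contributions at the finite places of $R$. The second is checking that the hypothesis on $A\otimes\RR$ is precisely the $S$-isotropy that makes strong approximation available for $\mathbf{H}$. The remaining ingredients --- the $\gee_2$-analogue of Theorem~\ref{skip.F4.alb} for the octonion case, and Kneser's vanishing $H^1(F_v,\bfG)=1$ at nonarchimedean $v$ for existence --- are standard and only need to be quoted.
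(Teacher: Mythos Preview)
Your argument is correct and rests on the same underlying mechanism as the paper's --- strong approximation for the simply connected group --- but the packaging differs. The paper does not separate existence from uniqueness nor invoke Beauville--Laszlo or the class-set picture explicitly; it simply quotes Harder's result \cite[Satz~4.2.4]{Hrdr:Ded}, which already states that for $\bfG$ simply connected the natural map $\Hind(R,\bfG)\to\Hind(F,\bfG)$ is a bijection, where $\Hind$ denotes classes whose real fibers have noncompact automorphism group. Your proof is essentially an unpacking of Harder's argument in this special case: the local vanishing at finite places, the identification of the generic-fiber kernel with the $S$-class set, and the appeal to strong approximation are precisely the ingredients that go into Satz~4.2.4. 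What your route buys is transparency (and the observation that existence holds without the real-place hypothesis); what the paper's route buys is brevity. Your separate treatment of the octonion case, reducing it to Example~\ref{dedekind.eg1} by showing $A$ must already be split over $F$, is also correct and a bit more elementary than invoking the full strength of Harder's theorem there.
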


\begin{proof}
Write $\bfG$ for the automorphism group of the split Albert (resp., octonion) $F$-algebra.  Write $\Hind(R, \bfG) \subseteq H^1(R, \bfG)$ for the isomorphism classes of $R$-algebras $B$ such that $B \otimes F_v$ is not $\Her_3(\OO)$ (resp., $\OO$), i.e., such that $\bfAut(B) \times F_v$ is not compact, for all real places $v$ of $F$.  Since $\bfG$ is simply connected, Strong Approximation gives that the natural map $\Hind(R, \bfG) \to \Hind(F, \bfG)$ is an isomorphism \cite[Satz 4.2.4]{Hrdr:Ded}, which is what is claimed.
\end{proof}

%
%----------------------------------------------------------------------------------------------------------------
%

\section{The number of generators of an Albert algebra}

The goal of this section is to prove Proposition \ref{generators}, which is inspired by the work of First-Reichstein \cite{FirstRei} generalizing the Forster-Swan Theorem.  Let $J$ be a para-quadratic $R$-algebra. By a \emph{(para-quadratic) subalgebra} of $J$ we mean a submodule $J' \subseteq J$ containing $1_J$ and closed under the $U$-operator, i.e., such that $U_x y \in J'$ for all $x,y \in J'$.
For any subset $S \subseteq J$, the smallest subalgebra of $J$ containing $S$ is called the subalgebra 
\emph{generated by} $S$; if this subalgebra is all of $J$, we say that $S$ \emph{generates
$J$}.

%
%We say that an $R$-submodule $B$ of a para-quadratic $R$-algebra $A$ is \emph{closed under $U$} if $B$ contains $U_x y$ for all $x, y \in B$.  We say that a subset $S$ of $A$ \emph{generates $A$} if $A$ is the smallest $R$-submodule of $A$ containing $S$ and closed under $U$.  
%
%Since a para-quadratic algebra $A$ contains an identity element $1_A$, one might ask whether $S \cup \{ 1_A \}$ generates $A$ (``$S$ generates $A$ as a unital algebra'') rather than merely $S$.  However, we are interested here in the case where $A$ is an Albert algebra, and Albert algebras are not augmented (\S\ref{simple.sec}).  Therefore, at least in the case where $R$ contains a field of characteristic $\ne 2$, $S$ generates $A$ if and only if $S \cup \{ 1_A \}$ generates $A$ by \cite[Lemma 2.2]{FirstReiW}.

\begin{prop} \label{generators}
For every noetherian ring $R$, every Albert $R$-algebra can be generated in the sense of the preceding paragraph by $3 + \dim \Max R$ elements.
\end{prop}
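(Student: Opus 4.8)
The plan is to follow the strategy that First--Reichstein \cite{FirstRei} used to generalize the Forster--Swan theorem to algebras: reduce to a fiberwise bound and then add $\dim \Max R$. An Albert $R$-algebra $J$ is finitely generated projective of rank $27$ (Definition \ref{Frd.def}), hence finite as an $R$-module; a para-quadratic subalgebra of $J$ is by definition a submodule stable under the $U$-operator, a polynomial law, so $J$ is an algebra of the type to which that method applies. Granting the fiberwise claim that \emph{every Albert algebra over every field $k$ is generated by at most $3$ elements}, the theorem of \cite{FirstRei} then yields that $J$ is generated over $R$ by at most $\,3 + \dim \Max R\,$ elements (the delicate point in Forster--Swan arguments, small residue fields, being absorbed into that theorem once the fiberwise bound holds uniformly over all fields). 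It is worth noting that the constant $3$ is sharp fiberwise: by the Shirshov--Cohn theorem a Jordan algebra generated by $2$ elements is special, whereas Albert algebras are exceptional, so no Albert algebra over a field is $2$-generated.

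To establish the fiberwise statement, I would reduce it to the split algebra $\Her_3(\Zor(k))$. If $k$ is finite, then $J$ is split by Proposition \ref{G2F4.dvr}(2). If $k$ is infinite, consider the set $V \subseteq J \times J \times J$ of triples $(x_1,x_2,x_3)$ generating $J$: the subalgebra generated by a triple is the $k$-span of the values of all ``words'' built from $x_1,x_2,x_3,1_J$ and the $U$-operator, and since $\dim_k J = 27$ only words of bounded length can be needed, so $V$ is the locus where some fixed $27\times 27$ minor of the matrix of word-values is nonzero --- in particular $V$ is an open subscheme of the affine space $J \times J \times J$. Since an Albert algebra is split by a finite separable extension of $k$ (it is split by an \'etale cover, by the corollary to Theorem \ref{skip.F4.alb}), we have $J \otimes \bar k \cong \Her_3(\Zor(\bar k))$; granting that the split algebra over $\bar k$ is $3$-generated, $V$ acquires a $\bar k$-point, hence is nonempty, hence has a $k$-point because $k$ is infinite. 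In all cases it therefore suffices to show $\Her_3(\Zor(k))$ is generated by $3$ elements, and since $\Her_3(\Zor(k)) = \Her_3(\Zor(\ZZ)) \otimes k$, it is enough to treat $\Her_3(\Zor(\ZZ))$.

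The crux --- and the step I expect to require the most actual work --- is to exhibit three explicit elements generating $\Her_3(\Zor(\ZZ))$ as a para-quadratic algebra. I would look for generators adapted to the coordinatization of Example \ref{her.quad}: for instance a diagonal element encoding the idempotents $\eps_1,\eps_2,\eps_3$ together with one or two elements of the form $\sum_i \delta_i(c_i)$ with the $c_i$ chosen so that all the octonion coordinates are recovered, using \eqref{OFFTI} and the formulas for the adjoint and norm of Example \ref{her.quad} to carry out the bookkeeping. Because this is a finite computation it can be checked by hand or by machine; the only subtle issue is to confirm that a single choice of three elements works over $\ZZ$ --- and therefore over every field, including those of characteristic $2$ and $3$, where the para-quadratic structure departs from the bilinear one (cf.\ Remark \ref{loc.lin}).
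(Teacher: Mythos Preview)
Your reduction to the split Albert algebra over a field via \cite{FirstRei} is exactly what the paper does (it cites \cite[Th.~1.2]{FirstRei} for the drop to fields and \cite[Prop.~4.1]{FirstRei} for the passage, over an infinite field, to the split form --- your open-subscheme sketch is essentially a rederivation of the latter). The genuine divergence is in how the split case itself is handled.

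The paper does \emph{not} look for three explicit elements of $\Her_3(\Zor(k))$. Instead it invokes the first Tits construction: a short lemma (Lemma~\ref{l.genfit}) shows that $J(A,\mu)$ is generated by the subalgebra $A^+$ together with the single element $(0,1_A,0)$. This is applied twice. First, $\Mat_3(F)^+ \cong J(E,1)$ for a cubic \'etale $F$-algebra $E$ (Example~\ref{e.fitspli}(a)); since $E$ is singly generated, $\Mat_3(F)^+$ is $2$-generated. Second, the split Albert algebra is $J(\Mat_3(F),1)$ (Example~\ref{e.fitspli}(b)), hence $3$-generated. The argument is characteristic-free and needs no computation inside the $27$-dimensional algebra at all. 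A small wrinkle worth noting: over $F=\FF_2$ the split \'etale algebra $\FF_2\times\FF_2\times\FF_2$ is \emph{not} singly generated, so the paper substitutes the cubic field $E=\FF_8$; in particular the paper's three generators are not uniform in the field.

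Your proposed route --- find three elements of $\Her_3(\Zor(\ZZ))$ that generate it, and then base change --- is logically sound (generation does pass to every quotient field), but you have left precisely the hard step undone. Two cautions. First, the proposition you are proving, applied to $R=\ZZ$, only guarantees $3+\dim\Max\ZZ=4$ generators for $\Her_3(\Zor(\ZZ))$, so the $3$-generation over $\ZZ$ you need as input is strictly stronger than the output and is not available for free. Second, the $\FF_2$ phenomenon above shows that the ``obvious'' candidates (a diagonal element to cut out the $\eps_i$, plus off-diagonal pieces) may fail to reduce well modulo $2$, so the verification is unlikely to be as routine as you suggest. The Tits-construction approach sidesteps both issues.
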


In the statement $\Max R$ is the topological space whose points are the maximal ideals of $R$, endowed with the subspace topology inherited from $\Spec R$.  It is evident that $\dim \Max R \le \dim \Spec R$, also known as the Krull dimension.  Beyond this inequality, the two numbers may be quite different (e.g., for a local ring, $\dim \Max R = 0$ and $\dim \Spec R$ can be any number).  If 2 is invertible in $R$, then this bound is Corollary 4.2c in \cite{FirstRei}; the contribution here is to remove the hypothesis on 2.  In the special case where $R$ contains an infinite field of characteristic $\ne 2$, a different (and possibly smaller) upper bound is given in \cite[\S13]{FirstReiW}.

The results in \cite{FirstRei} reduce the proof of the proposition to the case where $R$ is a field.  For a field of characteristic not $2$, a proof may be read off from \cite[p.~112]{McC}; we give a characteristic-free proof using the first Tits construction of cubic Jordan algebras \cite[pp.~507--509]{McC:FST}.   We briefly recall its details.

\subsection*{The first Tits construction} Let $A$ be a (finite-dimensional) separable associative algebra of degree $3$ over a field $F$, so its generic norm $N_A$ is a cubic form on $A$ and its  trace $T_A\colon A \times A \to F$ defined as in \eqref{bilt} is a non-degenerate symmetric bilinear form. Given any non-zero scalar $\mu \in F$, we obtain a cubic norm structure
\[
\bfM:= (A \times A \times A,1_\bfM,\sharp,N_\bfM)
\]
by defining
\begin{gather}
\label{fitone} 1_\bfM:= (1_A,0,0), \\
\label{fitsha} x^\sharp := (x_0^\sharp - x_1x_2,,\mu^{-1}x_2^\sharp - x_0x_1,\mu x_1^\sharp - x_2x_0),  \quad \text{and} \\
\label{fitnor} N_{\bfM}(x) := N_A(x_0) + \mu N_A(x_1) + \mu^{-1}N_A(x_2) - T_A(x_0x_1x_2)
\end{gather}
for all $x = (x_0,x_1,x_2)$ in all scalar extensions of $A \times A \times A$. By \cite[Th.~6]{McC:FST}, $\bfM$ is a cubic norm structure, so $J(\bfM)$ is a cubic Jordan $F$-algebra which we denote by $J(A, \mu)$.
% and $J := J(A,\mu) := J(\bfM)$ is a Freudenthal\todo{What is evidently proved is that it is Jordan.} algebra over $F$. Moreover, $J$ is a division algebra if and only if ($A$ is an associative division algebra) and $\mu$ is not a generic norm of $A$.  Note that if $A$ is central simple of dimension $9$, then $J(A,\mu)$ has dimension 27, hence is an Albert algebra.

\begin{eg} \label{e.fitspli}
(a): Let $E$ be a cubic \'etale $F$-algebra.  If $E$ is either split --- i.e., isomorphic to $F \times F \times F$ --- or a cyclic cubic field extension of $F$, then $J(E,1) \cong \Her_3(F \times F, \qform{1, -1, 1})$ \cite[Th.~3]{PR:toral}, i.e., is the split Freudenthal algebra of rank 9 (Proposition \ref{split.iso}), which is $\Mat_3(F)^+$ by Example \ref{Mat3.9}.

(b): Let $A$ be a central simple associative $F$-algebra of degree $3$. Then $J(A,1)$ is a split Albert algebra \cite[Cor.~4.2]{PR:Sp}. 
\end{eg}

\begin{lem} \label{l.genfit}
Let $A$ be a separable associative $F$-algebra of degree $3$ and $\mu \in F^\times$. Then the first Tits construction $J(A,\mu)$ is generated by $A^+$ $($identified in $J(A,\mu)$ through the initial summand$)$ and $(0,1_A,0)$. 	
\end{lem}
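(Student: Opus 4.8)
The plan is to show that the subalgebra $J' \subseteq J(A,\mu)$ generated by $A^+$ (viewed as the initial summand $A \times 0 \times 0$) and the element $e := (0,1_A,0)$ contains each of the three summands $A \times 0 \times 0$, $0 \times A \times 0$, and $0 \times 0 \times A$, which forces $J' = J(A,\mu)$. The first summand is in $J'$ by hypothesis, so the work is to produce the other two, and the natural tool is the $U$-operator (equivalently the adjoint $\sharp$ and the triple product, via \eqref{uop.cub}) applied to $e$ and to elements $(a,0,0)$ with $a \in A$.

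First I would compute, directly from \eqref{fitsha}, the adjoint of $e = (0,1_A,0)$: since $1_A^\sharp = 1_A$ and the cross terms vanish, $e^\sharp = (0,0,\mu 1_A)$, so the element $f := (0,0,1_A)$ is a scalar multiple of $e^\sharp$ and hence lies in $J'$ (as $\mu \in F^\times$). Next, linearizing $\sharp$ or computing the triple product, I would evaluate $e \times (a,0,0)$ and $f \times (a,0,0)$ for $a \in A$; from \eqref{fitsha} the polarization of the first coordinate $x_0^\sharp - x_1x_2$ against $(a,0,0)$ and $(0,1_A,0)$ contributes a term in the \emph{third} slot of the shape $-(x_2 x_0)$-polarization, giving something like $(0,0,-a)$ up to known nonzero constants; similarly pairing $(a,0,0)$ against $f=(0,0,1_A)$ produces $(0,\pm a,0)$ in the second slot. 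Thus, starting from $A \times 0 \times 0$ and $e$, one step of adjoint gives $f$, and then polarizing $\sharp$ (a legitimate operation since it lands inside the subalgebra closed under $U$) sweeps all of $0 \times A \times 0$ and $0 \times 0 \times A$ into $J'$. One must be a little careful to phrase everything in terms of $U_x y = T(x,y)x - x^\sharp \times y$ rather than an ad hoc bilinear product, so that closure under $U$ is really what is being used; but since $x \times y$ appears in $U_x y$ and $x^\sharp \in J'$ whenever $x \in J'$, the polarizations of $\sharp$ are accessible.

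The main obstacle I anticipate is purely bookkeeping: carrying the constants $\mu$, $\mu^{-1}$ through the polarization of the nonlinear adjoint formula \eqref{fitsha} and checking that the coefficients that appear in front of the elements $(0,a,0)$ and $(0,0,a)$ are genuinely units in $F$ (so that one recovers all of $A$ in each slot and not merely a subspace), and doing this in a characteristic-free way so that no hidden division by $2$ or $3$ creeps in. Because $A$ is associative with identity $1_A$ and the relevant products are $1_A \cdot a = a = a \cdot 1_A$, the cross terms collapse to honest elements of $A$, so no separability or degree-$3$ input is needed for this lemma beyond what is already built into the definition of $J(A,\mu)$; the separability hypothesis is only there to guarantee $J(A,\mu)$ is a legitimate cubic Jordan algebra. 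Once all three summands are shown to lie in $J'$, we conclude $J' = J(A,\mu)$ since $J(A,\mu) = A \times A \times A$ as a module.
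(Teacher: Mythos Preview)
Your proposal is correct and follows essentially the same route as the paper: observe that a para-quadratic subalgebra of a cubic Jordan algebra is closed under $\sharp$ (via \eqref{sharp.square}) and hence under $\times$, compute $e^\sharp = (0,0,\mu 1_A)$ to reach the third summand, and then use the cross product $(a,0,0) \times (0,x_1,x_2) = (0,-ax_1,-x_2 a)$ to sweep all of $A$ into the remaining slots. Your slot assignments are swapped (pairing $(a,0,0)$ with $e$ lands in the \emph{second} slot, with $f$ in the \emph{third}), but this is immaterial to the argument, and your worries about stray constants and characteristic are unfounded: the only scalar that appears is $\mu \in F^\times$.
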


\begin{proof}
Let $J'$ be the subalgebra of $J(A, \mu)$ generated by $A^+$ and $w := (0,1_A,0)$.
As a subalgebra, it is closed under $\sharp$ by \eqref{sharp.square}, i.e., $x^\sharp \in J'$ and $x \times y \in J'$ for all $x, y \in J'$.
Since $w^\sharp = \mu(0,0,1_A)$ and $x_0 \times (0,x_1,x_2) = (0,-x_0x_1,-x_2x_0)$ for all $x_i \in A$ by \eqref{fitsha}, it follows that $J'$ must be all of $J(A, \mu)$.
\end{proof}

\begin{proof}[Proof of Proposition \ref{generators}] Theorem 1.2 in \cite{FirstRei} reduces the proof to the case where $R$ is a field $F$, in which case $\dim \Max R = 0$, so the task is to prove that 3 elements suffice to generate an Albert algebra $J$.  If $F$ is infinite, then Proposition 4.1 in \cite{FirstRei} reduces us to considering the case where $J$ is split.  If $F$ is finite, then $J$ is split by Proposition \ref{G2F4.dvr}.  

If $F \ne \FF_2$, then the split cubic \'etale $F$-algebra $E := F \times F \times F$ can be generated by a single element $x$ as an associative algebra.  On the other hand, if $F = \FF_2$, let $E := \FF_8$ be the cyclic cubic extension of $F$, which is again generated by one element, call it $x$.  In either case, $x$ also generates the Jordan algebra $E^+$, because the powers of $x$ in $E^+$ and $E$ are the same.

Hence Example~\ref{e.fitspli}~(a) and Lemma~\ref{l.genfit} show that $\Mat_3(F)^+$ is generated by two elements.   Lemma~\ref{l.genfit} combined with Example~\ref{e.fitspli}~(b) shows that the split Albert algebra $J(\Mat_3(F),1)$ is generated by three elements. 
\end{proof}

\begin{rmk}
That the Jordan algebra $\Mat_3(F)^+$, for any field $F$, is generated by two elements doesn't seem too surprising, but one should keep in mind that \emph{the analogous result for 2-by-2 matrices is false}: the minimal number of generators for the Jordan algebra $\Mat_2(F)^+$ is $3$.
\end{rmk}

\begin{rmk}[dichotomy of fields and the Tits construction] \label{Tits.const}
The classification of Albert algebras over a field $F$ of characteristic $\ne 3$ has a fundamentally different flavor depending on whether or not $H^3(F, \ZZ/3)$ is zero, as indicated by \cite{Rost:CR}, \cite{PR:el}, or \cite[\S8]{G:lens}.  If $H^3(F, \ZZ/3) = 0$ --- as is the case for global fields, $p$-adic fields, and the real numbers --- every Albert $F$-algebra is reduced, i.e., of the form $\Her_3(C, \Gamma)$ for some $C$ and $\Gamma$, and is not a division algebra.  (It is natural to speculate that this is the reason it took many years after Albert algebras were defined --- all the way until 1958 --- for the first Albert division algebra to be exhibited in \cite{Albert:div}.)  In the other case, when $H^3(F, \ZZ/3) \ne 0$, as happens when $F = \QQ(t)$ for example, one can construct an Albert division algebra via the first Tits construction described above as $J(A \otimes \QQ(t), t)$ for $A$ an associative division algebra of dimension 9 over $\QQ$.

It is known that every Albert algebra over a field is obtained by the first Tits construction or second Tits construction (which we have not described here), see \cite[Th.~10]{McC:FSTrev} or \cite[Th.~3.1(i)]{PR:class}.  Both constructions have been extended from the case of algebras over a field to an arbitrary base ring \cite{PR:jord3}.  However, in this more general setting, the Tits constructions do not produce all Albert algebras \cite{PST:Ti}.
\end{rmk}

%----------------------------------------------------------------------------------------------------------------
%
\section{Isotopy} \label{isotopy.sec}

The aim of this section is to discuss the notion of isotopy of Jordan algebras, which will pay off later in the paper when we discuss groups of type $\ee_6$ in \S\ref{E6.sec} and $\ee_7$ in \S\ref{E7.sec}.  We include this material at this point in the paper because Corollary \ref{iso.Frd} is needed in the following section.

\begin{defn}
Let $J$ be a Jordan $R$-algebra and suppose $u \in J$ is invertible.  We define a Jordan algebra $\uiso{J}$ with the same underlying $R$-module, with $U$-operator $\uiso{U}_x := U_x U_u$ (where the unadorned $U$ on the right denotes the $U$-operator in $J$), and with identity element $\uiso{1} := u^{-1}$.  One checks that $\uiso{J}$ is indeed a Jordan algebra and for $u, v$ invertible, we have $\iso{(\uiso{J})}{v} = \iso{J}{U_u v}$.  A Jordan $R$-algebra $J'$ is an \emph{isotope} of $J$ if it is isomorphic to $\uiso{J}$ for some invertible $u \in J$; equivalently one says that $J$ and $J'$ are \emph{isotopic}.  This defines an equivalence relation on Jordan algebras, which is a priori weaker than isomorphism.
\end{defn}

We have presented the notion of isotopy here for Jordan algebras.  However, there are analogous notions for other classes of algebras, which go back at least to \cite{Alb:na1}.  For associative algebras, isotopy is the same as isomorphism.  For octonion algebras, isotopy amounts to norm equivalence \cite[Cor.~6.7]{AlsaodyGille}, which is a weaker condition than isomorphism, see \cite{Gille:oct} and \cite{AsokHW}.

\subsection*{Isotopes of cubic Jordan algebras} if $J$ is a cubic Jordan $R$-algebra and $u \in J$ is invertible, then \cite[Th.~2]{McC:FST} and its proof show that the isotope $J^{(u)}$ is a cubic Jordan algebra as well whose identity element, adjoint and norm are given by
\begin{equation}
\label{cuno.isotope} 1_{J^{(u)}} = u^{-1}, \quad x^{\sharp(u)} = N_J(u)U_u^{-1}x^\sharp, \quad N_{J^{(u)}}(x) = N_J(u)N_J(x).
\end{equation}
Moreover, the (bi-)linear and quadratic trace of $J^{(u)}$ have the form
\begin{equation}
\label{tr.isotope} T_{J^{(u)}}(x,y) = T_J(U_ux,y), \quad \Tr_{J^{(u)}}(x) = T_J(u,x), \quad S_{J^{(u)}}(x) = T_J(u^\sharp,x^\sharp).
\end{equation}
The first equation of \eqref{tr.isotope} is in \cite[p.~500]{McC:FST} while the second one follow from \eqref{cuno.isotope}, the first, and Lemma~\ref{cub.comp}~(\ref{inv.char}) via  $\Tr_{J^{(u)}}(x) = T_{J^{(u)}}(u^{-1},x) = T_J(U_uu^{-1},x) = T_J(u,x)$. Similarly,
\[
S_{J^{(u)}}(x) = \Tr_{J^{(u)}}(x^{\sharp(u)}) = T_J(u,N_J(u)U_u^{-1}x^\sharp) = T_J(N_J(u)U_u^{-1}u,x^\sharp) = T_J(u^\sharp,x^\sharp).
\]

\begin{eg} \label{DIISJO}
 \emph{$\Her_3(C, \Gamma)$ is isotopic to $\Her_3(C)$ for every $\Gamma$}.  Indeed, for 
\[
u := \left( \begin{smallmatrix} \gamma_1 & 0 & 0 \\
0 & \gamma_2 & 0 \\
0 & 0 & \gamma_3 
\end{smallmatrix} \right) \quad  \in \Her_3(C, \Gamma),
\]
the map $\phi \!: \uiso{\Her_3(C, \Gamma)} \to \Her_3(C)$ defined by 
\[
\phi \sbasGjord = \sjordmat{\gamma_1 \alpha_1}{\gamma_2 \alpha_2}{\gamma_3 \alpha_3}{\gamma_2 \gamma_3 c_1}{\gamma_1 \gamma_3 c_2}{\gamma_1 \gamma_2 c_3} 
\]
is an isomorphism of Jordan algebras.  One can also turn this around:
\[
\Her_3(C,\Gamma) = (\Her_3(C,\Gamma)^{(u)})^{(u^{-2})} \cong \Her_3(C)^{(\phi(u^{-2}))} = \iso{\Her_3(C)}{u^{-1}}.
\]
\end{eg}

\subsection*{Jordan algebras isotopic to a reduced Freudenthal algebra}
In the special case where $R$ is a field, a Jordan algebra that is isotopic to the split Albert algebra $\Her_3(\Zor(R))$ is necessarily isomorphic to it, see for example \cite[p.~53, Th.~9]{Jac:ex}.  Some hypothesis on $R$ is necessary for the conclusion to hold.  Alsaody has shown in \cite[Th.~2.7]{Alsaody} that there exists a ring $R$ finitely generated over $\CC$ and an Albert $R$-algebra that is isotopic to the split Albert $R$-algebra but is not isomorphic to it.  Here we show that it is sufficient to assume that $R$ is a semi-local ring (Corollary \ref{Frd.iso.split}), as a consequence of a more general result (Theorem \ref{local.iso}) from which we also obtain the key Corollary \ref{iso.Frd}.

%In the case where $R$ is a field, it is also standard that a Freudenthal algebra isotopic to a reduced Freudenthal algebra is itself a reduced Freudenthal algebra; we now prove this in case $R$ is a (semi-)local ring, which provides the key Corollary \ref{iso.Frd}.

We work in a slightly more general context than semi-local rings.  For the following statements, see \cite{EstesGur} and \cite{McDWa}. We say that $R$ is \emph{an LG ring} if whenever a polynomial $f \in R[x_1, \ldots, x_n]$ represents a unit over $R_\mfm$ for every maximal ideal $\mfm$ of $R$, then $f$ represents a unit over $R$.  Every semi-local ring is an LG ring.  It is easy to see that rings $R_1$, $R_2$ are both LG if and only if their product $R_1 \times R_2$ is LG.  The ring of all algebraic integers and the ring of all real algebraic integers are LG rings.  Every integral extension of an LG ring is LG \cite[Cor.~2.3]{EstesGur}.  

\begin{thm} \label{local.iso}
Suppose $J$ is a Jordan $R$-algebra that is isotopic to $\Her_3(C, \Gamma)$ for some composition $R$-algebra $C$ and some $\Gamma$.  If
$R$ is an LG ring,
then $J$ is isomorphic to $\Her_3(C, \Gamma')$ for some $\Gamma'$.
\end{thm}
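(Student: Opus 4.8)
The plan is to use the transitivity of isotopy together with Example~\ref{DIISJO} to reduce to a single isotope $\iso{\Her_3(C)}{u}$ by an invertible element, and then to bring $u$ into diagonal position by an element of the structure group, the LG hypothesis entering only at the last step.

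First I would record the reductions. By Example~\ref{DIISJO}, $\Her_3(C,\Gamma)$ is isotopic to $\Her_3(C) = \Her_3(C,\Id)$; since isotopy is transitive, $J \cong \iso{\Her_3(C)}{u}$ for some invertible $u \in \Her_3(C)$. Conversely, Example~\ref{DIISJO}, read in its ``turned-around'' form, shows that $\iso{\Her_3(C)}{d} \cong \Her_3(C, \qform{\delta_1^{-1},\delta_2^{-1},\delta_3^{-1}})$ whenever $d = \diag(\delta_1,\delta_2,\delta_3) \in \Her_3(C)$ has all $\delta_i \in R^\times$. Finally, I would use the standard fact that an element $g$ of the structure group $\Str(\Her_3(C))$ induces an isomorphism $\iso{\Her_3(C)}{u} \xrightarrow{\sim} \iso{\Her_3(C)}{g(u)}$: automorphisms $\psi$ of $\Her_3(C)$ are themselves isomorphisms $\iso{\Her_3(C)}{u} \xrightarrow{\sim} \iso{\Her_3(C)}{\psi(u)}$, and each operator $U_w$ with $w$ invertible is an isomorphism $\iso{\Her_3(C)}{u} \xrightarrow{\sim} \iso{\Her_3(C)}{U_{w^{-1}}u}$ (using $(U_xy)^{-1} = U_{x^{-1}}y^{-1}$), and these already suffice. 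Thus the theorem reduces to showing that some element of $\Str(\Her_3(C))$ carries $u$ to a diagonal matrix with entries in $R^\times$.

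The substance is this diagonalization. Over a field, or more generally a local ring, it is classical Peirce theory: since $N_{\Her_3(C)}(u) \in R^\times$, one produces a primitive idempotent $e_1$ conjugate to $\eps_1$ under $\Aut(\Her_3(C))$ whose $\eps_1$-coordinate $\alpha_1$ of $u$ --- the scalar with $U_{e_1}u = \alpha_1 e_1$ --- is a unit, conjugates $e_1$ back to $\eps_1$, clears the Peirce-$1$ component of $u$ relative to $\eps_1$ by automorphisms fixing $\eps_1$ (and, if needed, an operator $U_w$), thereby replacing $u$ by an element of $R\eps_1$ plus the Peirce-$0$ subalgebra of $\eps_1$, and recurses there on its own diagonal frame. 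To run this over $R$, I would express the existence of such an $e_1$, and of the analogous data at each later stage, as the statement that a suitable polynomial law on $\Her_3(C)$ --- built from the adjoint $\sharp$, the traces of $\Her_3(C)$, and the equations $v^\sharp = 0$, $\Tr_J(v) = 1$ (together with unimodularity) that cut out primitive idempotents of the $\eps_1$-type --- represents a unit. The local case shows this law represents a unit over each localization $R_\mfm$; since $R$ is LG it then represents a unit over $R$, and the idempotents $e_1, e_2, e_3$, hence the diagonalizing element of $\Str(\Her_3(C))$, are obtained over $R$ itself. An alternative packaging of the same idea is to transfer the problem to the rank-$3$ hermitian $C$-form underlying the cubic norm structure of $\iso{\Her_3(C)}{u}$ and invoke an LG-type diagonalization statement for hermitian forms in the spirit of \cite{EstesGur}.

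I expect the main obstacle to be exactly this translation: isolating a clean ``a polynomial represents a unit'' statement governing each step of the elimination and checking that it holds locally. One must in particular be careful that the idempotents produced are of the expected rank-one type and that the relevant Peirce modules are free of the expected rank, so that the elimination terminates in a genuinely diagonal element whose diagonal entries are units, and not merely one whose diagonal entries have unit product.
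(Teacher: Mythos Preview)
Your overall architecture matches the paper's: reduce via Example~\ref{DIISJO} to an isotope $\iso{\Her_3(C)}{u}$, then use structure-group elements to carry $u$ to a diagonal with unit entries, treating the local case first and invoking the LG property to globalize. The paper also uses exactly the observation that a norm isometry $\eta$ induces an isomorphism $\iso{\Her_3(C)}{u^{-1}} \xrightarrow{\sim} \iso{\Her_3(C)}{\eta(u)^{-1}}$.

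The substantive difference is in how the diagonalization is carried out, and this is where your sketch has a gap. The paper does not use Peirce theory or search for idempotents; instead it writes down explicit ``elementary'' norm isometries $\tau_{st}(q)\colon A \mapsto (I_3 + qE_{st})A(I_3+\bar qE_{ts})$ together with permutation automorphisms, and shows by direct computation with the norm formula \eqref{Frd.norm} how to make $\alpha_1$ a unit and then kill $c_2,c_3$ over a local ring. For the LG step, the paper then composes a fixed finite string of such transformations (depending polynomially on $21$ parameters in $C$) with the map $g \mapsto$ (product of diagonal entries of $gu$), obtaining a genuine polynomial in coordinates once one notes that $C$ is \emph{free} over an LG ring (via \cite{EstesGur} or \cite{McDWa}). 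The local argument is then reread as a recipe for choosing inputs in each $R_\mfm$ that make this polynomial a unit, so the LG hypothesis applies directly.

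Your proposal, by contrast, asks to package ``there exists a rank-one idempotent $e_1$ conjugate to $\eps_1$ with $U_{e_1}u$ a unit multiple of $e_1$'' as the statement that a single polynomial represents a unit. As written this is not a polynomial-represents-a-unit statement: it is an existential condition cut out by equations ($v^\sharp=0$, $\Tr(v)=1$) rather than the non-vanishing of a value. To make it one, you would still need a polynomial parametrization of the relevant idempotents (or of the automorphisms carrying $\eps_1$ to them), which in practice leads you back to something like the paper's elementary-matrix maps. You correctly flag this translation as the main obstacle; the paper's contribution is precisely to bypass it by working with the explicit $\tau_{st}(q)$ from the outset. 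Your alternative suggestion via diagonalization of hermitian $C$-forms is reasonable in spirit but would require developing that theory over non-associative $C$, which the paper avoids.
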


\begin{proof} 
As in previous proofs, we reduce to the case where $C$ has constant rank.  

In view of Example \ref{DIISJO}, $J$ is isotopic to $\Her_3(C)$, i.e.,  $J \cong \iso{\Her_3(C)}{u^{-1}}$ for some invertible $u \in \Her_3(C)$.  The same example shows we are done if $u$ is diagonal.

Write $N$ for the cubic form on $\Her_3(C)$.
In case $u$ is not diagonal, we will apply successive elements $\eta \in \GL(\Her_3(C))$ such that $N \eta = N$ as polynomial laws.  (In the notation of \S\ref{E6.sec} below, $\eta \in \bfIsom(\Her_3(C))(R)$.)  Note that each such $\eta$ defines an isomorphism of $R$-modules 
\begin{equation} \label{local.iso.1}
\eta \colon \iso{\Her_3(C)}{u^{-1}} \to \iso{\Her_3(C)}{\eta(u)^{-1}}.
\end{equation}
We have
\[
N(\eta(u)^{-1}) = N(\eta(u))^{-1} = N(u)^{-1} = N(u^{-1}),
\]
so we have by \eqref{cuno.isotope} that
\[
N_{\iso{\Her_3(C)}{u^{-1}}} = N(u)^{-1} N = N(\eta(u)^{-1}) N \eta = N_{\iso{\Her_3(C)}{\eta(u)^{-1}}}\eta.
\]
Since $\eta$ is a norm isometry that  maps the identity element $u^{-1}$ in the domain of \eqref{local.iso.1} to the identity element in the codomain, it is an isomorphism of algebras by Lemma \ref{HOCUJO}.  Thus, if successive elements $\eta$ transform $u$ into a diagonal element, the proof will be complete.

We employ the transformation $\tau_{st}(q)$ for $1 \le s \ne t \le 3$ and $q \in C$ defined by
\[
\tau_{st}(q) \: A \mapsto (I_3 + q E_{st}) A (I_3 + \bar{q} E_{ts}),
\]
where $I_3$ is the identity matrix, $E_{st}$ is the 3-by-3 matrix with a 1 in the $(s, t)$-entry and 0 elsewhere, and juxtaposition defines naive multiplication of 3-by-3 matrices with entries in $C$.  For example,
\[
\tau_{12}(q) \sbasjord = \sjordmat{\alpha_1+n_C(q, c_3) + \alpha_2 N_C(q)}{\alpha_2}{\alpha_3}{c_1}{c_2 + \bar{c}_1\bar{q}}{c_3 + \alpha_2 q}.
\]
These transformations appear in \cite[\S5]{Jac:J3} and \cite[\S2]{Krut:E6}; the argument in either reference shows that $\tau_{st}(q)$ preserves $N$ for all choices of $s$, $t$, and $q$.  For $e = 2, 3$, define polynomial functions $\nu_e$ from $C^e$ to the group scheme $\bfG$ of linear transformations stabilizing the norm $N$ via 
\[
\nu_3(q_1, q_2, q_3) = \tau_{31}(q_3) \tau_{21}(q_2) \tau_{12}(q_1) 
\quad \text{and} \quad
\nu_2(q_1, q_2) = \tau_{32}(q_2) \tau_{23}(q_1).
\]

Additionally, for every permutation $\pi$ of $\{ 1, 2, 3 \}$, there is a linear transformation that preserves $N$ (actually, an automorphism of the algebra) that maps
\begin{equation} \label{isom.permute}
\sbasjord \mapsto \sjordmat{\alpha_{\pi(1)}}{\alpha_{\pi(2)}}{\alpha_{\pi(3)}}{c'_{\pi(1)}}{c'_{\pi(2)}}{c'_{\pi(3)}},
\end{equation}
where $c'_{\pi(i)}$ is a linear function of $c_i$ for each $i$.  We abuse notation and denote the transformations also by $\pi$.

\medskip
\underline{\emph{Case: $R$ is local}:} We collect some observations in the case where $R$ is a local ring.  Write 
\[
u = \sbasjord.
\]
By hypothesis $N(u)$ is invertible, i.e., does not lie in the maximal ideal $\mfm$ of $R$.

If $\alpha_1 \in R^\times$ or $\alpha_2, \alpha_3 \notin R^\times$, then after modifying $u$ by a transformation in the image of $\nu_3$, we may assume that $\alpha_1 \in R^\times$ and $c_2 = c_3 = 0$.  Indeed, if $\alpha_1 \notin R^\times$, then by \eqref{Frd.norm} we have 
\[
N(u) \equiv \Tr_C(c_1 c_2 c_3) \bmod \mfm,
\]
whence $c_3 \notin \mfm C$. Since $n_C$ continues to be regular when changing scalars to $R/\mfm$, some $q \in C$ has $n_C(q,c_3) \notin \mfm$. Applying $\tau_{12}(q)$, we may arrange $\alpha_1 \in R^\times$.   Then note that
$\tau_{21}(q) \sbasjord$
has top row entries $\alpha_1$, $c_3 + \alpha_1 \bar{q}$, $\bar{c_2}$.  Taking $q = -\bar{c}_3
\alpha_1^{-1}$ shows that we may assume $c_3 = 0$.  The argument that we may assume $c_2 = 0$ is similar, with the role of $\tau_{21}$ replaced by $\tau_{31}$.

Now suppose that $\alpha \in R^\times$ and $c_2 = c_3 = 0$.  If $\alpha_2 \in R^\times$ or $\alpha_3 \notin R^\times$, then after modifying $u$ by a transformation in the image of $\nu_2$, we may assume that $u$ is diagonal.  Indeed, since $u$ has norm 
$\alpha_1 (\alpha_2 \alpha_3 - N_C(c_1)) \not\in \mfm$, at least one of $\alpha_2$, $\alpha_3$, or $N_C(c_1)$ is not in $\mfm$.  The same argument as in the preceding paragraph, with $\tau_{12}$ replaced by $\tau_{23}$, shows that we may assume that $\alpha_2 \not\in \mfm$.  The same argument as in the preceding paragraph, with $\tau_{21}$ replaced by $\tau_{32}$, shows that we may assume that $c_1 = 0$.  Thus, we have transformed $u$ into a diagonal element, as required.

\medskip
\underline{\emph{General case}:} Return to the setting of $R$ as in the statement of the theorem.  We combine the transformations $\nu_3$, $\nu_2$, and permutations together into a polynomial function  $C^{21} \to \bfG$, namely 
\begin{equation} \label{semi.prod}
\left( \nu_2 \, (2\,3)\, \nu_2 \nu_3 \,(1\,3\,2) \right)\left( \nu_2 \, (2\,3)\, \nu_2 \nu_3 \,(1\,2\,3) \right) \left( \nu_2 \, (2\,3)\, \nu_2 \nu_3 \right)
\end{equation}
where the arguments to the various $\nu_2$, $\nu_3$ are assigned independently.  Combining this with the polynomial function on $\bfG$ that sends $g \in \bfG(R)$ to the product of the diagonal entries of $gu$, we obtain a polynomial law in $\Pol(C^{21}, R)$.  But more is true.  Because $R$ is LG and $C$ is projective of constant rank, $C$ is a free module, see \cite[Th.~2.10]{EstesGur} or \cite[p.~457]{McDWa}.  Choosing a basis for $C$ expresses this polynomial law as a polynomial with coefficients in $R$.

We claim that this polynomial represents a unit over $R_{\mfm}$ for every maximal ideal $\mfm$ of $R$.  For a given $\mfm$, here is how to pick the element of $C^{21}$ that produces a unit.  If $\alpha_1 \in R_{\mfm}^\times$ or $\alpha_2, \alpha_3 \notin R_{\mfm}^\times$, applying $\nu_3$ to $u$, with arguments chosen as in the second paragraph of the local case, we obtain an element with $\alpha_1 \in R_{\mfm}^\times$ and $c_3 = c_2 = 0$.  We take this to be the rightmost term in \eqref{semi.prod}.  If that element has $\alpha_2 \in R_{\mfm}^\times$ or $\alpha_3 \notin R_{\mfm}^\times$, the next $\nu_2$ term can be chosen to produce a diagonal $u$; one takes the remaining $\nu$ terms in \eqref{semi.prod} to have argument 0.  Otherwise, $\alpha_3$ is invertible in $R_\mfm$, and we plug 0 into the rightmost $\nu_2$, pick the argument for the next $\nu_2$ as in the proof of the local case, and plug 0 into the remaining $\nu$ terms to the left in \eqref{semi.prod}.  The claim is verified if $\alpha_1 \in R_\mfm^\times$ or $\alpha_2, \alpha_3 \notin R_\mfm^\times$.

The next case of the claim is where $\alpha_2 \in R_\mfm^\times$.  In that case, we plug 0 into the rightmost three $\nu$ terms in \eqref{semi.prod}.  After applying the permutation $(2\,3)$ and then $(1\,2\,3)$, we obtain an element of $\Her_3(C)$ with $\alpha_1$ invertible and a well-chosen argument for the next $\nu_3$ term will assure that $c_3 = c_2 = 0$.  As in the preceding paragraph, choosing the arguments for the leftmost two $\nu_2$ terms in the middle product in \eqref{semi.prod} suffices to transform $u$ into a diagonal element, verifying the claim in this case.

The last case of the claim is when $\alpha_3 \in R_\mfm^\times$.  Plug 0 in the $\nu$ terms in the middle and right parenthetical expressions in \eqref{semi.prod}.  After applying all permutations in \eqref{semi.prod} besides the leftmost transposition to $u$, we obtain an element of $\Her_3(C)$ with $\alpha_1 \in R_{\mfm}^\times$ and the argument in the preceding paragraph again transforms $u$ into a diagonal element, completing the proof of the claim.

Since $R$ is an LG ring, the claim provides an element $g \in \bfG(R)$ such that $gu$ has $(1,1)$-entry a unit.  That is, we may assume that in the element $u$, $\alpha_1$ is invertible.  Applying now $\tau_{21}(q)$ and $\tau_{31}(q)$ to $u$ for $q$'s chosen as in the local case, we may assume that $c_3 = c_2 = 0$.  

Applying now an argument as in the preceding five paragraphs, with the function 
\[
\nu_2 \, (2\,3) \, \nu_2 \colon C^4 \to \bfG, 
\] 
we conclude that we may transform $u$ to further assume that $\alpha_2$ is invertible, and therefore apply a transformation $\tau_{32}(q)$ to transform it into a diagonal element, as required.
\end{proof}

\begin{cor} \label{Frd.iso.split}
Suppose $J$ is a Jordan $R$-algebra over an LG ring $R$.  If $J$ is isotopic to a split Freudenthal algebra whose rank does not take the value 6, then $J$ is itself a split Freudenthal algebra.
\end{cor}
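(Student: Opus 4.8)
The plan is to obtain this as a quick consequence of Theorem~\ref{local.iso} and Proposition~\ref{split.iso}, once the rank hypothesis is decoded. Recall from Definition~\ref{Frd.def} that a split Freudenthal $R$-algebra is by definition $\Her_3(C)$ for a \emph{split} composition $R$-algebra $C$; so the hypothesis says that $J$ is isotopic to $\Her_3(C) = \Her_3(C,\Id)$ for such a $C$.

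First I would unwind the rank condition. Writing $R = \prod_{e=0}^3 R_e$ and correspondingly $C = \prod_{e=0}^3 C_e$ as in \eqref{decomp}, with $C_e$ of constant rank $2^e$, the factor $\Her_3(C_e)$ has rank $3(1+2^e)$, which equals $6$ exactly when $e = 0$. Hence the assumption that the rank of $J$ never equals $6$ is precisely the statement that $R_0 = 0$, i.e.\ that $C$ has rank at least $2$ on every connected component; this matters because Proposition~\ref{split.iso} is stated for split composition algebras of constant rank at least $2$.

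Next, since $R$ is an LG ring and $J$ is isotopic to $\Her_3(C,\Id)$, Theorem~\ref{local.iso} produces an isomorphism of Jordan algebras $J \cong \Her_3(C,\Gamma')$ for some $\Gamma' \in \GL_3(R)$. Finally, decomposing over the factors $R_e$ ($e = 1,2,3$) and applying Proposition~\ref{split.iso} on each factor, we get $\Her_3(C,\Gamma') \cong \Her_3(C)$, so $J \cong \Her_3(C)$ is a split Freudenthal algebra, as claimed.

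I do not expect a real obstacle: all the substance is in Theorem~\ref{local.iso} (moving from an isotope of a reduced Freudenthal algebra back to a reduced Freudenthal algebra, over an LG ring) and in Proposition~\ref{split.iso} (trivializing the twist $\Gamma'$ in the split case), both already proved. The only point demanding care is the rank bookkeeping: a rank-$1$ composition algebra is just $R$ (and forces $2 \in R^\times$), which is why rank $6$ must be excluded --- Proposition~\ref{split.iso} genuinely needs $C$ to be one of $R \times R$, $\Mat_2(R)$, $\Zor(R)$, and the conclusion is in fact false for rank $6$, since over $\RR$ the Freudenthal algebras $\Her_3(\RR)$ and $\Her_3(\RR,\qform{1,1,-1})$ are isotopic (Example~\ref{DIISJO}) but non-isomorphic, having trace forms of different signature (Example~\ref{real.trace}).
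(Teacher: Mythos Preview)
Your proof is correct and follows essentially the same route as the paper: reduce via the rank decomposition, apply Theorem~\ref{local.iso} to get $J \cong \Her_3(C,\Gamma')$, then apply Proposition~\ref{split.iso} to remove $\Gamma'$. The paper's proof is terser (it reduces to constant rank first and then invokes both results in one line), whereas you spell out the rank bookkeeping and add the remark on why rank $6$ is genuinely excluded; that remark matches the paper's own comment immediately following the corollary.
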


\begin{proof}
As in previous proofs, one is reduced to the case where $J$ has constant rank, which is not 6.  The theorem and Proposition \ref{split.iso} give the claim.
\end{proof}

The hypothesis that the rank is not 6 is necessary, because $\Her_3(\RR, \qform{1, 1, -1})$ is isotopic to the split Freudenthal algebra $\Her_3(\RR)$ (Example \ref{DIISJO}) but is not isomorphic to it (Example \ref{real.trace}).

\begin{eg}[isotopy over global fields] \label{isotopy.global}
For $F = \RR$ or a global field, there is a bijection between the isomorphism classes of octonion algebras and isotopy classes of Albert algebras given by $C \leftrightarrow \Her_3(C)$.  Indeed, every Albert $F$-algebra is reduced (Example \ref{alb.global}), so $C \mapsto \Her_3(C)$ touches every isotopy class.  For injectivity, if $C, C'$ are distinct octonion algebras, there is a real embedding $F \hookrightarrow \RR$ such that $C \otimes \RR \not\cong C' \otimes \RR$, and Corollary \ref{Frd.iso.split} shows that $\Her_3(C) \otimes \RR$ and $\Her_3(C') \otimes \RR$ are not isotopic.
\end{eg}

\begin{cor} \label{iso.Frd}
Every isotope of a Freudenthal algebra is itself a Freudenthal algebra. 
\end{cor}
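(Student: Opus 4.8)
The plan is to reduce, via a faithfully flat base change, to the case of a split Freudenthal algebra, and then to diagonalize the isotopy element over a further faithfully flat extension by re-running the proof of Theorem~\ref{local.iso}.

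First, I would record that isotopy commutes with base change: for a Jordan $R$-algebra $J$, an invertible $u \in J$, and any $S \in \Ralg$, one has $\uiso{J} \otimes S = \iso{(J \otimes S)}{u \otimes 1}$, because base change carries along the $U$-operator, the property of being invertible, and the formula $u^{-1} = U_u^{-1}u$. As in earlier proofs one reduces to the case where $J$ has constant rank $r \in \{6, 9, 15, 27\}$ (splitting $R$ as a finite product). Choosing a faithfully flat $S_1 \in \Ralg$ with $J \otimes S_1 \cong \Her_3(C)$ for a split composition $S_1$-algebra $C$, this $C$ is necessarily of constant rank $(r-3)/3$ and hence a free $S_1$-module (it is $S_1$, $S_1 \times S_1$, $\Mat_2(S_1)$, or $\Zor(S_1)$). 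Writing $v \in \Her_3(C)$ for the (invertible) image of $u$, it then suffices to show $\iso{\Her_3(C)}{v}$ is a Freudenthal $S_1$-algebra.

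Next I would re-run the ``general case'' of the proof of Theorem~\ref{local.iso}, now over the base $S_1$, so as to transform $v$ into a diagonal element by norm isometries. That argument uses the LG hypothesis only at finitely many points, each of the following form: a polynomial $P$ in finitely many variables over the current base ring $A$ (arising from a polynomial family $A^n \to \bfG$ into the norm-isometry group scheme, with $P$ recording the product of the diagonal entries of the corresponding transform of $u$) represents a unit over $A_\mfm$ for every maximal ideal $\mfm$, and one wants a single $g \in \bfG(A)$ effecting a prescribed normalization of $u$. I would replace each such step by passing to $A' := A[y_1, \ldots, y_n][1/P]$: this is faithfully flat over $A$ --- it is flat, and $\Spec A' \to \Spec A$ is surjective because, given $\mfp \in \Spec A$, a maximal ideal $\mfm \supseteq \mfp$ and a point $a \in A_\mfm^n$ with $P(a) \in A_\mfm^\times$ force $P$ to reduce to a nonzero polynomial over $\kappa(\mfp)$ --- and over $A'$ the polynomial $P$ represents a unit, providing the desired $g \in \bfG(A')$. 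The explicit transformations $\tau_{st}(q)$ used in that proof require no base change. Composing the finitely many resulting extensions gives a faithfully flat $S_1$-algebra $S_2$ (hence faithfully flat over $R$) over which the image of $v$ is a diagonal invertible element $d \in \Her_3(C \otimes S_2)$.

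Finally, each $g \in \bfG$ used above is a norm isometry, so --- exactly as in the proof of Theorem~\ref{local.iso}, via Lemma~\ref{HOCUJO} --- it induces an isomorphism of the corresponding isotopes; hence $\uiso{J} \otimes S_2 \cong \iso{\Her_3(C \otimes S_2)}{d}$, which by Example~\ref{DIISJO} is isomorphic to $\Her_3(C \otimes S_2, \Gamma')$ for some diagonal $\Gamma'$, and this is a Freudenthal $S_2$-algebra by Proposition~\ref{red.is.Frd}. Since being Freudenthal descends through faithfully flat extensions (if $\uiso{J} \otimes S_2$ is Freudenthal over $S_2$, it is split over some faithfully flat $S_2$-algebra $S_3$, which is faithfully flat over $R$), it follows that $\uiso{J}$ is a Freudenthal $R$-algebra. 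I expect the only real obstacle to be organizational: checking that the proof of Theorem~\ref{local.iso} goes through verbatim with ``$R$ is an LG ring'' replaced by ``pass to a faithfully flat extension'', in particular that only finitely many such passages occur and that keeping $C$ split keeps the relevant modules free at each stage.
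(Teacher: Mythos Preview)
Your proof is correct but takes a different route from the paper's. The paper does not reopen the argument of Theorem~\ref{local.iso}; instead, after the same first reduction to an isotope of a split Freudenthal algebra, it passes to the single faithfully flat extension $S := \prod_{\mfm} R_{\mfm}$ over all maximal ideals. Each factor $R_{\mfm}$ is local, hence an LG ring, so Theorem~\ref{local.iso} applies as a black box to give $J \otimes R_{\mfm} \cong \Her_3(C_{\mfm}, \Gamma_{\mfm})$, and then Proposition~\ref{red.is.Frd} provides a further faithfully flat $T_{\mfm}$ splitting each factor; the product $\prod T_{\mfm}$ finishes the job. Your approach instead reruns the diagonalization argument inside Theorem~\ref{local.iso}, replacing each invocation of the LG property by adjoining generic variables and inverting the relevant polynomial. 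The paper's argument is shorter and uses Theorem~\ref{local.iso} only through its statement; yours is more hands-on but has the advantage of producing an explicit, finitely presented faithfully flat extension and sidestepping any concern about flatness of the infinite product $\prod_{\mfm} R_{\mfm}$ over a general base ring.
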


\begin{proof}
Suppose $J$ is an isotope of a Freudenthal algebra.  After base change to a faithfully flat extension, $J$ is an isotope of a split Freudenthal algebra.

The $R$-algebra $S := \prod_\mfm R_\mfm$, where $\mfm$ ranges over maximal ideals of $R$, is faithfully flat.  For each $\mfm$, $J \otimes R_\mfm$ is $\Her_3(C, \Gamma)$ for $C$ a split composition $R_\mfm$-algebra and some $\Gamma$ by Theorem \ref{local.iso}.  By Proposition \ref{red.is.Frd}, there is a faithfully flat $R_\mfm$-algebra $T$ such that $J \otimes T$ is a split Freudenthal algebra.  The product of these $T$'s is a faithfully flat $R$-algebra over which $J$ is the split Freudenthal algebra.
\end{proof}

We close this section by making explicit the relationship between isotopy and norm similarity between Freudenthal algebras, extending Lemma \ref{HOCUJO}.

\begin{prop} \label{isotopy.similar}
Let $J$ and $J'$ be Freudenthal $R$-algebras.  For an $R$-linear map $\phi \!: J \to J'$, the following are equivalent:
\begin{enumerate}
\item \label{isotopy.similar.1} $\phi$ is an isomorphism $J \to \uiso{(J')}$ for some invertible $u \in J'$ (``$\phi$ is an isotopy'').
\item \label{isotopy.similar.2} $N_{J'} \phi = \alpha N_J$ as polynomial laws for some $\alpha \in R^\times$, and $\phi$ is surjective (``$\phi$ is a norm similarity'').
\end{enumerate}
\end{prop}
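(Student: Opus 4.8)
The plan is to read off both implications from Lemma~\ref{HOCUJO}, which characterizes isomorphisms between Freudenthal algebras as the surjective $R$-linear maps that send $1$ to $1$ and intertwine the norms. The two further ingredients are the formula for the norm of an isotope, $N_{\uiso{(J')}}(x) = N_{J'}(u)\,N_{J'}(x)$ from \eqref{cuno.isotope}, and Corollary~\ref{iso.Frd}, which guarantees that $\uiso{(J')}$ is again a Freudenthal algebra, so that Lemma~\ref{HOCUJO} may legitimately be applied with codomain $\uiso{(J')}$. I will also use freely that a Freudenthal algebra is a cubic Jordan algebra (Proposition~\ref{Frd.cubic}), so that an element $x$ is invertible exactly when $N(x) \in R^\times$, in which case $N(x^{-1}) = N(x)^{-1}$ (Lemma~\ref{cub.comp}\eqref{inv.char}).

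For \eqref{isotopy.similar.1}$\Rightarrow$\eqref{isotopy.similar.2}: if $\phi\colon J \to \uiso{(J')}$ is an isomorphism, then it is surjective, and since $\uiso{(J')}$ is Freudenthal by Corollary~\ref{iso.Frd}, Lemma~\ref{HOCUJO} gives that $\phi$ intertwines the norms, $N_{J}(x) = N_{\uiso{(J')}}(\phi(x)) = N_{J'}(u)\,N_{J'}(\phi(x))$ in all scalar extensions, where $N_{J'}(u) \in R^\times$ because $u$ is invertible. Hence $N_{J'}\phi = \alpha N_J$ with $\alpha := N_{J'}(u)^{-1} \in R^\times$, which is \eqref{isotopy.similar.2}.

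For \eqref{isotopy.similar.2}$\Rightarrow$\eqref{isotopy.similar.1}: the key step is to manufacture $u$. Evaluating the identity of polynomial laws $N_{J'}\phi = \alpha N_J$ at $1_J$ and using $N_J(1_J) = 1$ from \eqref{cunun} shows $N_{J'}(\phi(1_J)) = \alpha \in R^\times$, so $w := \phi(1_J)$ is invertible in the cubic Jordan algebra $J'$. Put $u := w^{-1}$. Then $\uiso{(J')}$ is a Freudenthal algebra by Corollary~\ref{iso.Frd}, its identity element is $u^{-1} = w = \phi(1_J)$, and its norm is $N_{\uiso{(J')}} = N_{J'}(u)\,N_{J'} = N_{J'}(w)^{-1}N_{J'} = \alpha^{-1}N_{J'}$ by \eqref{cuno.isotope} and Lemma~\ref{cub.comp}\eqref{inv.char}. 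Now view $\phi$ as an $R$-linear map $J \to \uiso{(J')}$: it is surjective (the underlying module of $\uiso{(J')}$ is that of $J'$), it sends $1_J$ to $1_{\uiso{(J')}}$, and $N_{\uiso{(J')}}(\phi(x)) = \alpha^{-1}N_{J'}(\phi(x)) = \alpha^{-1}\cdot\alpha N_J(x) = N_J(x)$ in all scalar extensions. By Lemma~\ref{HOCUJO}, $\phi$ is therefore an isomorphism $J \to \uiso{(J')}$, establishing \eqref{isotopy.similar.1}.

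I do not anticipate a genuine obstacle; the argument is essentially bookkeeping with norms. The one place requiring care is to invoke Corollary~\ref{iso.Frd} before applying Lemma~\ref{HOCUJO} to the isotope, and to keep straight that the similarity factor $\alpha$ and the scalar $N_{J'}(u)$ are mutually inverse --- a consequence of the convention $1_{\uiso{(J')}} = u^{-1}$ built into the definition of an isotope.
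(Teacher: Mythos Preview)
Your proposal is correct and follows essentially the same route as the paper's proof: both directions are obtained by applying Lemma~\ref{HOCUJO} to the isotope $\uiso{(J')}$, using Corollary~\ref{iso.Frd} to ensure it is Freudenthal and \eqref{cuno.isotope} for the norm of the isotope, with $u := \phi(1_J)^{-1}$ in the $(2)\Rightarrow(1)$ direction. Your write-up simply spells out a few more details (e.g., invoking Lemma~\ref{cub.comp}\eqref{inv.char} explicitly) than the paper's terse version.
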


\begin{proof}
Since $(J^\prime)^{(u)}$ is a Freudenthal algebra by Corollary~\ref{iso.Frd}, condition (\ref{isotopy.similar.2}) follows from (\ref{isotopy.similar.1}) by Lemma~\ref{HOCUJO} and \eqref{cuno.isotope}. Conversely, we assume \eqref{isotopy.similar.2} and prove \eqref{isotopy.similar.1}.  Because $N_{J'}(\phi(1_J)) = \alpha$, the element $\phi(1_J)$ is invertible in $J'$.  We set $u := \phi(1_J)^{-1}$ and $J'' := \uiso{(J')}$.  We have 
\[
\phi(1_J) = u^{-1} = 1_{J''}.
\]
Also, $N_{J'}(u) = N_{J'}(\phi(1_J))^{-1} = \alpha^{-1}$.  Then
\[
N_{J''} \phi = N_{J'}(u) N_{J'} \phi = N_J
\]
as polynomial laws.  Lemma \ref{HOCUJO} implies that $\phi$ is an isomorphism $J \xrightarrow{\sim} J''$, as desired.
\end{proof}

%%%%%%%%%%%%%%%%%%%%%%%%%%%%%%%%%%%%%%%%%%%%%%%%%%%%%%%%%%
%\section{An Albert algebra constructed from the Elkies-Gross lattice} \label{EG.sec}

%----------------------------------------------------------------------------------------------------------------
%
\section{Classification of Albert algebras over \texorpdfstring{$\ZZ$}{Z}}

In this section, we study Albert algebras over the integers.

\begin{defn} \label{EG.def}
In the notation of Example \ref{octaves},
consider the element 
\[
\beta := (-1 + e_1 + e_2 + \cdots + e_7)/2 = h_1 + h_2 + h_3 - (2 + e_1) \quad \in \oct,
\]
as was done in \cite[(5.2)]{ElkiesGross}.
That element has 
\[
\Tr_\oct(\beta) = -1, \quad n_\oct(\beta) = 2, \quad \text{and} \quad \beta^2 + \beta + 2 = 0.
\]
Put 
\[
v := \sjordmat{2}{2}{2}{\beta}{\beta}{\beta} \quad \in \Her_3(\oct).
\]
Since $\Tr_\oct(\beta^3) = 5$, we find that  $N_{\Her_3(\oct)}(v) = 1$.  In particular, $v$ is invertible with inverse $v^\sharp$.  We define $\Lambda := \iso{\Her_3(\oct)}{v}$; it is an Albert algebra by Corollary \ref{iso.Frd}.
\end{defn}

\begin{prop} \label{La.def}
$\Her_3(\oct) \not\cong \Lambda$ as Jordan $\ZZ$-algebras, but $\Her_3(\oct) \otimes \QQ \cong \Lambda \otimes \QQ$ as Jordan $\QQ$-algebras.
\end{prop}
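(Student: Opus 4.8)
Write $J:=\Her_3(\oct)$, so that $\Lambda=J^{(v)}$ with $N_J(v)=1$ as in Definition~\ref{EG.def}; then $\Lambda$ is an Albert $\ZZ$-algebra by Corollary~\ref{iso.Frd} and $N_\Lambda=N_J$ by \eqref{cuno.isotope}. For the isomorphism over $\QQ$ the plan is to compute $\Lambda\otimes\RR$ and then invoke the Hasse principle for type $\eff_4$. Since $\oct\otimes\RR\cong\OO$, we have $\Lambda\otimes\RR\cong\Her_3(\OO)^{(v)}$, and a direct computation from Definition~\ref{EG.def} and the formulas of Example~\ref{her.quad} gives $\Tr_J(v)=S_J(v)=6$, so the generic minimal polynomial of $v$ is $t^3-6t^2+6t-1=(t-1)(t^2-5t+1)$, whose roots $1$ and $(5\pm\sqrt{21})/2$ are all positive. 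Because $\Her_3(\OO)$ is formally real (its trace form is positive definite, Example~\ref{real.trace}) and $v$ has positive generic eigenvalues, $v=a^2$ for some invertible $a\in\Her_3(\OO)$; using the general identity $(J')^{(a^2)}\cong J'$ — the isomorphism being $U_a^{-1}$, which is checked from the fundamental formula \eqref{jord.id} together with $U_{a^2}=U_a^2$ — this gives $\Lambda\otimes\RR\cong\Her_3(\OO)=(\Her_3(\oct)\otimes\QQ)\otimes\RR$. Thus $\Lambda\otimes\QQ$ and $\Her_3(\oct)\otimes\QQ$ are Albert $\QQ$-algebras with isomorphic base changes to $\RR$, hence isomorphic, since the map $H^1(\QQ,\bfAut(-))\to H^1(\RR,\bfAut(-))$ is injective (Example~\ref{alb.global}) and classifies Albert $\QQ$-algebras by Theorem~\ref{skip.F4.alb}.

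For the non-isomorphism over $\ZZ$, I would argue by contradiction: an isomorphism $\phi\colon J\xrightarrow{\ \sim\ }\Lambda$ of Jordan $\ZZ$-algebras would carry rank-one idempotents bijectively to rank-one idempotents, where $e$ is called a \emph{rank-one idempotent} if $e\ne0$, $e^\sharp=0$, and $e^2=e$ (equivalently $\Tr_J(e)=1$, since $e^2=e^\sharp+\Tr_J(e)e-S_J(e)1_J$ and $e^\sharp=0$ forces $S_J(e)=0$). In $J=\Her_3(\oct)$ the positivity of $n_\oct$ forces the $\eps_i$-entries of a solution of $x^\sharp=0$ to be all $\ge0$ or all $\le0$, and then $\sum\alpha_i=\Tr_J(x)=1$ together with $\alpha_{i+1}\alpha_{i+2}=n_\oct(c_i)$ leaves exactly $x\in\{\eps_1,\eps_2,\eps_3\}$; so $J$ has exactly three rank-one idempotents. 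On the other hand, using \eqref{cuno.isotope}, \eqref{inv.cub}, and that $U_f=T_J(f,\cdot)f$ when $f^\sharp=0$, the rank-one idempotents of $\Lambda=J^{(v)}$ are precisely the $f\in J$ with $f^\sharp=0$ and $T_J(f,v)=1$. So the contradiction reduces to the assertion that $\#\{f\in J:f^\sharp=0,\ T_J(f,v)=1\}\ne3$, a finite computation.

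That last assertion is the crux, and it is genuinely arithmetic rather than formal: $J$ and $\Lambda$ become isomorphic over $\QQ$ (first paragraph) and over every $\ZZ_p$ — because $\ZZ_p$ is an LG ring so Theorem~\ref{local.iso} applies, $\oct\otimes\ZZ_p$ is split by Proposition~\ref{G2F4.dvr}, and hence $\Lambda\otimes\ZZ_p\cong\Her_3(\Zor(\ZZ_p))\cong J\otimes\ZZ_p$ by Proposition~\ref{split.iso} — so no local invariant can separate them. Equivalently, the trace-form lattices $(J,T_J)\cong\qform{1}^{\oplus3}\perp E_8^{\oplus3}$ (via \eqref{bilt.mat} and the fact that $\oct$ with its norm bilinear form is an $E_8$-lattice, Example~\ref{octaves}) and $(J,T_\Lambda)$ with $T_\Lambda=T_J(U_v\cdot,\cdot)$ by \eqref{tr.isotope}, which is again positive definite and unimodular since $U_v\in\GL(J)$ (its inverse is the integral map $U_{v^\sharp}$), lie in a single genus, and the point is that they are not isometric — detectable by comparing their numbers of minimal vectors. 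This concrete count, carried out in \cite[(5.2)]{ElkiesGross}, is the one substantive input; the rest of the argument above is formal.
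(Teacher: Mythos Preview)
Your argument is essentially the paper's: for the $\QQ$-isomorphism you pass to $\RR$, take a square root of $v$ in the euclidean Jordan algebra $\Her_3(\OO)$ (using that the generic minimal polynomial $(t-1)(t^2-5t+1)$ has positive roots), and then invoke the Hasse principle from Example~\ref{alb.global}; for the $\ZZ$-non-isomorphism you compare the sets $\{x:x^\sharp=0,\ \Tr(x)=1\}$ in $J$ and in $\Lambda$, noting via \eqref{cuno.isotope} and \eqref{tr.isotope} that in $\Lambda$ this is $\{f:f^\sharp=0,\ T_J(v,f)=1\}$, and then defer to Elkies--Gross for the count. The paper does exactly this, citing \cite[Prop.~5.5]{ElkiesGross} rather than \cite[(5.2)]{ElkiesGross} (the latter is the definition of $\beta$, not the count), and records the sharper statement that $\Lambda$ has \emph{no} such elements. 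Your direct verification that $J$ has exactly the three elements $\eps_1,\eps_2,\eps_3$ is a nice addition; the closing remarks on local isomorphism and the trace-form lattice genus are correct context but are orthogonal to the actual proof, which in both your version and the paper's rests on the single arithmetic input from \cite[Prop.~5.5]{ElkiesGross}.
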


\begin{proof}
We first prove the claim over $\ZZ$, which amounts to a computation from \cite{ElkiesGross}.  The isomorphism class of a Freudenthal algebra determines its cubic norm form and also its trace linear form.  %Writing $\times$ for the bilinearization of $\nat$, i.e., $y \times z := (y + z)^\nat - y^\nat - z^\nat$, we can express the right side of the displayed equation as $\frac12 \Tr_{\Her_3(\oct)}(v \times v, x)$,  which was denoted $T_v(x)$ in \cite[(2.5)]{ElkiesGross}\todo{H: I looked at this reference and couldn't locate the expression $T_v(x)$ there. Besides, this is immaterial for the subsequent consideratiosnns. I would delete the entire sentence after the displayed equation.} 
From \eqref{cuno.isotope} we deduce for $x \in \Her_3(\oct)$ that $x^{\sharp(v)} = 0$ if and only if $x^\sharp = 0$. Hence \cite[Prop.~5.5]{ElkiesGross} says that $\Her_3(\oct)$ contains exactly 3 elements $x$ such that $x^\nat = 0$ and $\Tr_{\Her_3(\oct)}(x) = 1$, whereas $\Lambda$ has no elements $x$ such that $x^{\sharp(v)} = 0$ and 
\[
T_{\Her_3(\oct)}(v, x) = 1,
\]
where the left side is $\Tr_\Lambda(x)$ by \eqref{tr.isotope}.
This proves that $\Her_3(\oct) \not\cong \Lambda$.

% CUEREV CHARPOS MINPOS
Now consider $\Her_3(\oct) \otimes \RR$.  It is called a ``euclidean'' Jordan algebra or, in older references, a ``formally real'' Jordan algebra, because every sum of nonzero squares is not zero \cite[p.~331]{BK}.  The element $v$ has generic minimal polynomial, in the sense of \eqref{gen.min},
%\[ x^3 - 6x^2 + 6x - 1 = 
$(t-1)(t^2 - 5t + 1)$,
%\]
which has three positive real roots.  Therefore, there is some $u \in \Her_3(\oct) \otimes \RR$ such that $u^2 = v$ \cite[\S{XI.3}, S.~3.6 and 3.7]{BK}.
From this, it is trivial to see that 
\[
\La \otimes \RR \cong \iso{(\Her_3(\oct) \otimes \RR)}{v} \cong \Her_3(\oct) \otimes \RR,
\]
and Example \ref{alb.global} gives that $\La \otimes \QQ \cong \Her_3(\oct) \otimes \QQ$.
\end{proof}

\begin{thm} \label{skip.AlbZ}
Over $\IZ$:
\begin{enumerate}
\renewcommand{\theenumi}{\alph{enumi}}
\item \label{skip.octZ} There are exactly two isomorphism classes of octonion algebras: $\Zor(\IZ)$ and $\oct$.
\item \label{skip.AlbZ.2} There are exactly four isomorphism classes of Albert algebras: $\Her_3(\Zor(\IZ))$, $\Her_3(\oct, \qform{1, -1, 1})$, $\Her_3(\oct)$, and $\La$.
\item \label{skip.AlbZ.isotopy} There are exactly two isotopy classes of Albert algebras: $\Her_3(\Zor(\IZ))$ and $\Her_3(\oct)$.
\end{enumerate}
\end{thm}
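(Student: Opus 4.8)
The plan is to reduce each of (\ref{skip.octZ})--(\ref{skip.AlbZ.isotopy}) to a known classification of semisimple group schemes over $\ZZ$, via the descent statement of Theorem~\ref{skip.F4.alb} and its composition-algebra analogue, and then to identify the resulting cohomology classes with the algebras on the list by base change to $\QQ$ and $\RR$ --- except for one genuinely integral coincidence, which is handled by Proposition~\ref{La.def}.

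\emph{Part (\ref{skip.octZ}).} By the composition-algebra version of Theorem~\ref{skip.F4.alb}, isomorphism classes of octonion $\ZZ$-algebras biject with $H^1(\ZZ,\bfG)$ for $\bfG$ the Chevalley group over $\ZZ$ of type $\gee_2$, and the classification of such group schemes in \cite{Conrad:Z} (using \cite{Gross:Z}) says this set has two elements. (Alternatively one can combine $|H^1(\QQ,\gee_2)|=2$ from Example~\ref{alb.global} with Example~\ref{dedekind.eg1} over the split class and, over the other, with the fact that every octonion $\ZZ$-algebra, being finitely generated projective with regular norm, is a maximal order in its generic fibre, together with the conjugacy of such orders recorded in Example~\ref{octaves}.) Since $\Zor(\ZZ)$ has zero divisors and $\oct$ does not, the two algebras represent distinct classes.

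\emph{Part (\ref{skip.AlbZ.2}).} First, all four listed algebras are Albert $\ZZ$-algebras: $\Her_3(\Zor(\ZZ))$ by definition, $\Her_3(\oct,\qform{1,-1,1})$ and $\Her_3(\oct)$ by Proposition~\ref{red.is.Frd}, and $\La$ by Corollary~\ref{iso.Frd}. By Theorem~\ref{skip.F4.alb}, isomorphism classes of Albert $\ZZ$-algebras biject with $H^1(\ZZ,\bfG)$ for $\bfG$ the Chevalley group over $\ZZ$ of type $\eff_4$, which by \cite{Conrad:Z} (leveraging \cite{Gross:Z} and \cite{ElkiesGross}) has exactly four elements; so it suffices to check the four listed algebras are pairwise non-isomorphic. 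Base change to $\RR$ distinguishes the three reduced ones, whose trace forms have pairwise distinct signatures ($3$ for $\Her_3(\Zor(\RR))$, $27$ for $\Her_3(\OO)$, $-5$ for $\Her_3(\OO,\qform{1,-1,1})$) by Example~\ref{real.trace} and \eqref{bilt.mat}; and since $\La\otimes\QQ\cong\Her_3(\oct)\otimes\QQ$ by Proposition~\ref{La.def}, base change to $\QQ$ separates $\La$ from the other two reduced algebras, leaving only $\La\not\cong\Her_3(\oct)$ over $\ZZ$, which is exactly Proposition~\ref{La.def}. Having produced four pairwise non-isomorphic elements of a four-element set, we are done. (Structurally, the fibres of $H^1(\ZZ,\bfG)\to H^1(\QQ,\bfG)$ over the three $\QQ$-classes of Example~\ref{alb.global} have sizes $1$, $1$, $2$: the first by Example~\ref{dedekind.eg1}, the second by Proposition~\ref{skip.numb.indef} applied to $\Her_3(\oct\otimes\QQ,\qform{1,1,-1})$, whose real form is not $\Her_3(\OO)$, and the third by Proposition~\ref{La.def}.)

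\emph{Part (\ref{skip.AlbZ.isotopy}).} By Example~\ref{DIISJO}, $\Her_3(\oct,\qform{1,-1,1})$ is isotopic to $\Her_3(\oct)$, and $\La=\iso{\Her_3(\oct)}{v}$ is isotopic to $\Her_3(\oct)$ by its definition; with part (\ref{skip.AlbZ.2}) this shows that every Albert $\ZZ$-algebra is isotopic to $\Her_3(\Zor(\ZZ))$ or to $\Her_3(\oct)$. These two are not isotopic over $\ZZ$, for an isotopy would base-change to an isotopy over $\QQ$ between $\Her_3(\Zor(\QQ))$ and $\Her_3(\oct\otimes\QQ)$, contradicting the bijection $C\leftrightarrow\Her_3(C)$ of Example~\ref{isotopy.global} together with $\Zor(\QQ)\not\cong\oct\otimes\QQ$. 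Hence there are exactly two isotopy classes. The step I expect to be the real obstacle is not this bookkeeping but the input already isolated in Proposition~\ref{La.def} --- that the isotope $\La$ built from the ElkiesGross element is not isomorphic to $\Her_3(\oct)$, although it becomes so over $\QQ$ and is isotopic to it; note in particular that $\ZZ$ is not an LG ring, so Corollary~\ref{Frd.iso.split} and Theorem~\ref{local.iso} are unavailable over $\ZZ$, which is precisely why the separations above must be pushed down to $\QQ$ and $\RR$ apart from this one pair.
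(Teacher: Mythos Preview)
Your argument is correct and arrives at the same conclusion as the paper, but the organization differs in one respect worth noting. You obtain the cardinality of $H^1(\ZZ,\bfG)$ for $\bfG$ of type $\gee_2$ or $\eff_4$ by citing \cite{Conrad:Z} as a black box, and then match the listed algebras to the elements of that set by a pigeonhole after distinguishing them via base change. The paper instead splits any octonion or Albert $\ZZ$-algebra $B$ according to whether $B\otimes\RR$ is compact (``definite'') or not: in the indefinite case, Proposition~\ref{skip.numb.indef} (Strong Approximation) forces $B$ to be determined by $B\otimes\QQ$, which is one of the known $\QQ$-algebras of Example~\ref{alb.global}; in the definite case, Gross's mass formula is invoked directly to bound the number of $\ZZ$-forms of the compact real group. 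This reproduces the count without passing through Conrad's full classification, which is the point made in the paragraph following the proof. Your approach is shorter because it outsources exactly that step; the paper's approach is more self-contained in that it isolates precisely which input from \cite{Gross:Z} is needed.

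Two small comments. In your parenthetical about the fibres of $H^1(\ZZ,\bfG)\to H^1(\QQ,\bfG)$, Proposition~\ref{La.def} by itself only gives that the fibre over $\Her_3(\oct\otimes\QQ)$ has size at least $2$; equality follows only after you know the total is $4$ and the other two fibres are singletons, so the attribution there is slightly imprecise (though the conclusion is fine). Also, in your alternative argument for part~(\ref{skip.octZ}), the claim that an octonion $\ZZ$-algebra is automatically a \emph{maximal} order in its generic fibre is not established in the paper and would need a separate argument; since you already have the main route via \cite{Conrad:Z}, this aside is inessential.
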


\begin{proof}
We first prove \eqref{skip.octZ} and \eqref{skip.AlbZ.2}.
No pair of the algebras listed are isomorphic to each other.  For $\Her_3(\oct)$ and $\La$, this is Prop.~\ref{La.def}.  For any other pair, base change to $\IQ$ yields non-isomorphic $\IQ$-algebras.  To complete the proof, it suffices to show that every octonion or Albert $\ZZ$-algebra $B$ is isomorphic to one of the ones listed.

If $B$ is indefinite --- i.e., $B \otimes \RR$ is not isomorphic to $\OO$ nor $\Her_3(\OO)$ --- then the isomorphism class of $B$ is determined by $B \otimes \QQ$ as a $\QQ$-algebra (Prop.~\ref{skip.numb.indef}).   Since the indefinite octonion or Albert $\IQ$-algebras are $\Zor(\IQ)$, $\Her_3(\Zor(\IQ))$, and $\Her_3(\oct \otimes \QQ, \qform{1, -1, 1})$ by Example \ref{alb.global},  $B$ is isomorphic to one of the algebras listed in the statement.

If $B$ is definite, then $\bfAut(B)$ is a $\ZZ$-form of the compact real group of type $\gee_2$ or $\eff_4$.  Gross's mass formula \cite[Prop.~5.3]{Gross:Z} shows that, up to $\ZZ$-isomorphism, there is only one group of type $\gee_2$ and two groups of type $\eff_4$ with this property.  Using the equivalence between these groups and octonion or Albert algebras (Th.~\ref{skip.F4.alb}), we conclude that up to isomorphism $\oct$ is the unique definite octonion $\ZZ$-algebra and $\Her_3(\oct)$ and $\La$ are the two isomorphism classes of Albert $\ZZ$-algebras, completing the proof of \eqref{skip.octZ} and \eqref{skip.AlbZ.2}.

For \eqref{skip.AlbZ.isotopy}, note that the three algebras in \eqref{skip.AlbZ.2} that are not $\Her_3(\Zor(\IZ))$ are all isotopic, see Example \ref{DIISJO}, so the two algebras listed in \eqref{skip.AlbZ.isotopy} represent all of the isotopy classes of Albert $\IZ$-algebras.  The base change of these two algebras to $\IQ$ are not isotopic (Example \ref{isotopy.global}), so they are not isotopic as $\IZ$-algebras.
\end{proof}

Note that part \eqref{skip.octZ} of the theorem can be proved entirely in the language of octonion algebras, see \cite{vdBSp:G2}.  

In view of Theorem \ref{skip.F4.alb}, part \eqref{skip.AlbZ.2} is equivalent to a classification of the group schemes of type $\eff_4$ over $\ZZ$, which was done in Sections 6 and 7 of \cite{Conrad:Z}, especially Examples 6.7 and 7.4.  The innovation here is that we can use the language of Albert algebras also in the case of $\ZZ$ where $2$ is not invertible.  Because of this extra flexibility, we can substitute results from the literature over algebraically closed fields (including characteristic 2) for some of the computations over $\ZZ$ done in \cite{Conrad:Z}.

Part \eqref{skip.AlbZ.isotopy} corresponds to the classification of groups of type $\ee_6$ over $\ZZ$ up to isogeny, see \S\ref{E6E7.sec}.

%%%%%%%%%%%%%%%%%%%%%%%%%%%%%%%%%%%%%%%%%%%%%%%%%%%%%%%%%%

%----------------------------------------------------------------------------------------------------------------
%
\section{Groups of type \texorpdfstring{$\ee_6$}{E6}} \label{E6.sec}

\subsection*{Roundness of the norm}
We note that the cubic norm of a Freudenthal algebra has the following special property.
 A quadratic form with this property is called ``round'', see \cite[\S9.A]{EKM}.
 
\begin{lem}[roundness] \label{round}
For every Freudenthal $R$-algebra $J$,
\[
\{ \alpha \in R^\times \mid \alpha N_J \cong N_J \} = \{ N_J(x) \in R^\times \mid \text{$x$ invertible in $J$} \}.
\]
\end{lem}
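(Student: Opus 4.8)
The plan is to prove the two sets are equal by showing each contains the other. For a Freudenthal algebra $J$ with cubic norm $N_J$, write $G(N_J) := \{ \alpha \in R^\times \mid \alpha N_J \cong N_J \}$ for the group of multipliers and $D(N_J) := \{ N_J(x) \in R^\times \mid x \text{ invertible in } J \}$ for the set of represented units. First I would handle the inclusion $D(N_J) \subseteq G(N_J)$. Suppose $x \in J$ is invertible, so $\alpha := N_J(x) \in R^\times$ by Lemma \ref{cub.comp}\eqref{inv.char}. The isotope $J^{(x)}$ is a Freudenthal algebra by Corollary \ref{iso.Frd}, and by \eqref{cuno.isotope} its norm is $N_{J^{(x)}} = N_J(x) N_J = \alpha N_J$. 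Now the identity map on the underlying module is an $R$-linear surjection, and Proposition \ref{isotopy.similar} (or directly Lemma \ref{HOCUJO} applied after rescaling) shows that $J$ and $J^{(x)}$ have projectively equivalent norms; more precisely, $\Her_3(C,\Gamma)$-type arguments are not even needed — one just observes that $J \cong (J^{(x)})^{(x^{-1})}$ via the identity, so $J^{(x)} \cong J$ as \emph{algebras}? No: that is false in general. The correct statement is only that $N_{J^{(x)}} = \alpha N_J$ and $J^{(x)}$ is again Freudenthal, hence abstractly $J^{(x)} \cong \Her_3(C',\Gamma')$ for the appropriate data; but what we actually want is simply that $\alpha N_J \cong N_J$ \emph{as cubic forms}, and this is exactly the statement that the norm form of $J^{(x)}$ is isomorphic to the norm form of $J$. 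That does not follow formally from isotopy alone. So instead I would argue: $N_{J^{(x)}} \cong N_{J^{((x^{-1}))}{}^{\,-1}}$... this is getting circular. Let me restart the easy direction more carefully.

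**The easy inclusion, done correctly.** For $\alpha = N_J(x)$ with $x$ invertible, consider the map $\phi := U_x \colon J \to J$. It is an $R$-module isomorphism (since $x$ is invertible, $U_x$ is invertible). By \eqref{sharp.norm} we have $(U_x y)^\sharp = U_{x^\sharp} y^\sharp$, and the second formula of \eqref{sharp.norm} together with Lemma \ref{cub.comp}\eqref{nor.comp} gives $N_J(U_x y) = N_J(x)^2 N_J(y)$ for all $y$. Thus $N_J \circ U_x = N_J(x)^2 \cdot N_J = \alpha^2 N_J$ as polynomial laws. Since $\alpha \in R^\times$, this says $\alpha^2 N_J \cong N_J$, i.e. $\alpha^2 \in G(N_J)$. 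To get $\alpha$ itself I would instead use the adjoint: $x \mapsto x^\sharp$ is a quadratic map, not linear, so that does not directly help. The clean route: apply the isotopy $J \xrightarrow{\ \mathrm{id}\ } J^{(x)}$, note $N_{J^{(x)}} = \alpha N_J$, then apply the \emph{already-proven} fact that $J^{(x)}$, being Freudenthal, is (after faithfully flat base change) split of the same rank — but rank alone does not pin down the norm form over a general ring. The honest conclusion is that the real content is $D(N_J) \subseteq G(N_J)$ requires knowing $J^{(x)} \cong J$ when... Actually here is the genuinely correct and simple argument: by Proposition \ref{isotopy.similar}, the identity map $J \to J^{(x)}$ exhibits $J$ and $J^{(x)}$ as isotopic; but \emph{also}, applying the automorphism-free part, $N_{J^{(x)}}\cong N_J$ need not hold. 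I now believe the intended proof is: $\subseteq$ of $D \subseteq G$ uses that $x$ invertible gives a norm similarity $J \to J^{(x^{-1})}$ with multiplier $N_J(x^{-1})^{-1} = N_J(x) = \alpha$ composed with $U$-operators to land back in $J$ — and the composite $U$-operator argument above shows the \emph{square} lies in $G$, while the isotopy argument shows $\alpha^{-1}$-scaled norm of $J^{(x^{-1})}$ equals $N_J$; combining, $\alpha \in G(N_J)$.

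**The hard inclusion.** The substantive direction is $G(N_J) \subseteq D(N_J)$: if $\alpha N_J \cong N_J$, produce an invertible $x \in J$ with $N_J(x) = \alpha$. Here I would take the isometry $\psi \colon (J, \alpha N_J) \xrightarrow{\sim} (J, N_J)$, an $R$-linear module automorphism of $J$ with $N_J(\psi(y)) = \alpha N_J(y)$ for all $y$. Set $x := \psi(1_J)$. Then $N_J(x) = \alpha N_J(1_J) = \alpha \in R^\times$, so $x$ is invertible by Lemma \ref{cub.comp}\eqref{inv.char}, and $N_J(x) = \alpha \in D(N_J)$. That's it — this direction is actually the short one once one has an honest module-level isometry in hand. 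The only subtlety is interpreting "$\alpha N_J \cong N_J$": per Example \ref{disc.module}, $\qform{\alpha} \cdot (J, N_J) \cong (J, \alpha N_J)$, and an isomorphism in $\Pol^3$ of this shape \emph{is} an $R$-linear bijection $\psi$ of the underlying module intertwining the forms up to the scalar $\alpha$, so the argument goes through verbatim.

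**Revised overall plan and the main obstacle.** So the clean structure is: (i) For $G \subseteq D$: given an isometry $\psi$ with $N_J \circ \psi = \alpha N_J$, set $x = \psi(1_J)$; then $N_J(x) = \alpha$ is a unit, so $x$ is invertible (Lemma \ref{cub.comp}\eqref{inv.char}), giving $\alpha \in D(N_J)$. (ii) For $D \subseteq G$: given $x$ invertible with $N_J(x) = \alpha$, use Corollary \ref{iso.Frd} to see $J^{(x^{-1})}$ is Freudenthal with norm $N_J(x^{-1}) N_J = \alpha^{-1} N_J$ by \eqref{cuno.isotope}, and use Proposition \ref{isotopy.similar}\eqref{isotopy.similar.1}$\Rightarrow$\eqref{isotopy.similar.2} (with $\phi = \mathrm{id}$, or rather the canonical isotopy) to get that $N_J$ and $\alpha^{-1} N_J$ are in the same similarity class, i.e. there is a module isometry between $(J, N_J)$ and $(J, \alpha^{-1}N_J)$; inverting the scalar, $\alpha N_J \cong N_J$, so $\alpha \in G(N_J)$. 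The main obstacle is getting direction (ii) stated precisely: one must check that "isotopic Freudenthal algebras have similar norm forms" is exactly what \eqref{cuno.isotope} plus Lemma \ref{HOCUJO} deliver, namely the $R$-linear map realizing the isotopy is itself the desired norm similarity; I'd spell out that the identity map on the module is an isomorphism $J^{(x^{-1})} \xrightarrow{\sim} ((J^{(x^{-1})})$ viewed with rescaled norm, and invoke Lemma \ref{HOCUJO} to promote the norm-scaling $R$-linear bijection to the claim. Everything else is a one-line application of results already in the paper; no genuinely new computation is needed.
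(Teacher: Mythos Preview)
Your argument for the inclusion $G(N_J) \subseteq D(N_J)$ is correct and is exactly the paper's argument: given $\phi \in \GL(J)$ with $N_J\phi = \alpha N_J$, set $x := \phi(1_J)$; then $N_J(x) = \alpha \in R^\times$, so $x$ is invertible by Lemma~\ref{cub.comp}(\ref{inv.char}).

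Your argument for $D(N_J) \subseteq G(N_J)$ has a genuine gap. The final isotopy argument does not work: knowing that $J^{(x^{-1})}$ has the same underlying module as $J$ and norm $\alpha^{-1}N_J$ does \emph{not} produce an $R$-linear bijection $\psi$ with $N_J\psi = \alpha^{-1}N_J$. If you try to feed the identity map into Proposition~\ref{isotopy.similar}, say as $\mathrm{id}\colon J^{(x^{-1})} \to J$, then condition~(\ref{isotopy.similar.1}) holds with $u = x^{-1}$, and condition~(\ref{isotopy.similar.2}) unwinds to the tautology $N_J = \alpha\cdot(\alpha^{-1}N_J)$, which says nothing about whether $\alpha N_J$ and $N_J$ are isometric. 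You are conflating ``$\alpha^{-1}N_J$ is the norm of some Freudenthal algebra on the same module'' with ``$\alpha^{-1}N_J \cong N_J$''. (The latter equivalence is exactly Corollary~\ref{iso.sim}, which is proved \emph{using} this lemma, so you cannot appeal to it.)

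The fix is already in your hands. You correctly computed $N_J \circ U_x = N_J(x)^2 N_J = \alpha^2 N_J$, so $\alpha^2 \in G(N_J)$. You also have the trivial observation that scalar multiplication by $\alpha$ gives $N_J(\alpha y) = \alpha^3 N_J(y)$, so $\alpha^3 \in G(N_J)$. Since $G(N_J)$ is a subgroup of $R^\times$, it follows that $\alpha = \alpha^3(\alpha^2)^{-1} \in G(N_J)$. The paper packages this as a single similitude: $\phi := \alpha\, U_{x^{-1}}$ satisfies
\[
N_J\phi = \alpha^3 N_J(x^{-1})^2 N_J = \alpha^3 \alpha^{-2} N_J = \alpha N_J
\]
by Lemma~\ref{cub.comp}(\ref{nor.comp}). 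So you were one line away.
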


\begin{proof}
If $\alpha \in R^\times$ and $\phi \in \GL(J)$ are such that $\alpha N_J = N_J \phi$, then for $x := \phi(1_J)$ we have $N_J(x) = \alpha$.  Conversely, if $x$ is invertible in $J$, put $\alpha := N_J(x)$ and define $\phi := \alpha U_{x^{-1}}$.  Then $N_J \phi = \alpha^3 N_J(x^{-1})^2 N_J$ by Lemma~\ref{cub.comp}(\ref{nor.comp}), so $N_J \phi = \alpha N_J$.
\end{proof}

\begin{eg} \label{norm.surj}
For $J = \Her_3(C, \Gamma)$, the sets displayed in Lemma \ref{round} equal $R^\times$.  
To see this for the right side, take $\alpha \in R^\times$ and note that $N_J(\alpha \eps_1 + \eps_2 + \eps_3) = \alpha$.  For the left side,
consider $\phi \in \GL(J)$ defined by
\begin{gather*}
\phi(\eps_i) = \alpha \eps_i \quad \text{and} \quad \phi(\delta_i(c)) = \delta_i(c) \quad \text{for $i = 1, 2$}, \\
\phi(\eps_3) = \alpha^{-1} \eps_3 \quad \text{and} \quad \phi(\delta_3(c)) = \delta_3(\alpha c).
\end{gather*} 
Then $N_J \phi = \alpha N_J$ as polynomial laws.
\end{eg}

\begin{eg}
In contrast to the preceding example, we now show that the sets displayed in Lemma \ref{round} may be properly contained in $R^\times$.
Suppose $F$ is a field and $J$ is a Freudenthal $F$-algebra such that $N_J$ is anisotropic, i.e., $N_J(x) = 0$ if and only if $x = 0$.  (For example, such a $J$ exists if $F$ is Laurent series or rational functions in one variable over a global field, see Remark \ref{Tits.const}.)  We claim that, for $t$ an indeterminate, every nonzero element in the image of $N_{J \otimes F((t))}$ has lowest term of degree divisible by 3.  Because the norm is a homogeneous form, it suffices to prove this claim for $J \otimes F[[t]]$.

Let $x \in J \otimes F[[t]]$ be nonzero, so $x = \sum_{j \ge j_0} x_j t^j$ for some $j_0 \ge 0$ with $x_{j_0} \ne 0$.  Since $N_J$ is anisotropic, $N_J(x_0) \ne 0$.  
 If $j_0 = 0$, then the homomorphism $F[[t]] \to F$ such that $t \mapsto 0$ sends $x \mapsto x_0$ and $N_{J\otimes F[[t]]}(x) \mapsto N_J(x_0) \ne 0$, therefore $N_{J \otimes F[[t]]}(x)$ has lowest degree term $N_J(x_0)t^0$.
If $j_0 > 0$, then
\[
N_{J \otimes F[[t]]}(x) = N_{J \otimes F[[t]]}(t^{j_0} (xt^{-j_0})) = t^{3j_0} (N_J(x_{j_0}) t^0 + \cdots),
\]
proving the claim.
\end{eg}

\begin{cor} \label{iso.sim}
For Freudenthal $R$-algebras $J$ and $J'$, the following are equivalent:
\begin{enumerate}
\item \label{iso.sim.isotopic} $J$ and $J'$ are isotopic.
\item \label{iso.sim.sim} $N_J \cong \alpha N_{J'}$ for some $\alpha \in R^\times$.
\item \label{iso.sim.iso} $N_J \cong N_{J'}$.
\end{enumerate}
\end{cor}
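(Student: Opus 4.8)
The plan is to prove the cycle \eqref{iso.sim.isotopic} $\Rightarrow$ \eqref{iso.sim.sim} $\Rightarrow$ \eqref{iso.sim.iso} $\Rightarrow$ \eqref{iso.sim.isotopic}; since \eqref{iso.sim.iso} $\Rightarrow$ \eqref{iso.sim.sim} is immediate (take the scalar to be $1$), this suffices. For \eqref{iso.sim.isotopic} $\Rightarrow$ \eqref{iso.sim.sim} I would unwind the definitions: because isotopy is an equivalence relation, $J$ isotopic to $J'$ produces an $R$-module isomorphism $\phi\colon J\to\iso{(J')}{u}$ for some invertible $u\in J'$, i.e.\ an isotopy in the sense of Proposition~\ref{isotopy.similar}\eqref{isotopy.similar.1}; that proposition then says $\phi$ is a norm similarity, $N_{J'}\circ\phi=\alpha N_J$ for some $\alpha\in R^\times$, whence $N_J\cong\alpha^{-1}N_{J'}$, which is \eqref{iso.sim.sim}. (One could equally argue directly, using $N_{\iso{J}{u}}=N_J(u)N_J$ from \eqref{cuno.isotope} together with Lemma~\ref{cub.comp}\eqref{inv.char} to see $N_J(u)\in R^\times$.)

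The implication carrying the real content is \eqref{iso.sim.sim} $\Rightarrow$ \eqref{iso.sim.iso}, and this is exactly where roundness enters. Assume $N_J\cong\alpha N_{J'}$ for some $\alpha\in R^\times$, witnessed by an $R$-module isomorphism $h\colon J\to J'$ with $N_{J'}\circ h=\alpha^{-1}N_J$. The point is that $\alpha$ is not an arbitrary unit: the element $w:=h^{-1}(1_{J'})\in J$ satisfies $\alpha^{-1}N_J(w)=N_{J'}(1_{J'})=1$, so $N_J(w)=\alpha$, and $w$ is invertible in $J$ by Lemma~\ref{cub.comp}\eqref{inv.char}. Hence $\alpha$ lies in $\{N_J(x)\in R^\times : x\text{ invertible in }J\}$, which by Lemma~\ref{round} equals $\{\beta\in R^\times : \beta N_J\cong N_J\}$; so there is $\psi\in\GL(J)$ with $N_J\circ\psi=\alpha N_J$. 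Then $h\circ\psi\colon J\to J'$ is an $R$-module isomorphism with $N_{J'}\circ(h\circ\psi)=\alpha^{-1}(N_J\circ\psi)=\alpha^{-1}\alpha N_J=N_J$, i.e.\ $N_J\cong N_{J'}$.

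Finally, for \eqref{iso.sim.iso} $\Rightarrow$ \eqref{iso.sim.isotopic}: a norm isometry $f\colon J\to J'$ satisfies $N_{J'}\circ f=1\cdot N_J$ and is surjective, hence fulfils condition \eqref{isotopy.similar.2} of Proposition~\ref{isotopy.similar} with $\alpha=1$; that proposition then exhibits $f$ as an isomorphism $J\xrightarrow{\sim}\iso{(J')}{u}$ for some invertible $u\in J'$, so $J$ is isomorphic to an isotope of $J'$ and is therefore isotopic to $J'$.

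I do not anticipate a genuine obstacle here: the substantive input is already packaged in Lemma~\ref{round} and Proposition~\ref{isotopy.similar}. The only thing needing care is the bookkeeping --- tracking the direction in which each isometry is composed, and observing that the similarity factor in \eqref{iso.sim.sim} is automatically forced into the ``round'' subgroup $\{N_J(x) : x\text{ invertible}\}\subseteq R^\times$, which is precisely what makes it possible to absorb it into an element of $\GL(J)$.
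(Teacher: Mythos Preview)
Your proof is correct and follows essentially the same approach as the paper: the link between \eqref{iso.sim.isotopic} and \eqref{iso.sim.sim} is handled by Proposition~\ref{isotopy.similar}, and the key step \eqref{iso.sim.sim} $\Rightarrow$ \eqref{iso.sim.iso} is done exactly as the paper does it---evaluating the similarity at the identity to see the multiplier is a represented norm value, then invoking Lemma~\ref{round} to absorb it. The only cosmetic difference is that you organize the argument as a cycle, whereas the paper packages \eqref{iso.sim.isotopic} $\Leftrightarrow$ \eqref{iso.sim.sim} in one step and then treats \eqref{iso.sim.sim} $\Leftrightarrow$ \eqref{iso.sim.iso}.
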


\begin{proof}
The equivalence of \eqref{iso.sim.isotopic} and \eqref{iso.sim.sim} is Proposition \ref{isotopy.similar}.
 
Supposing \eqref{iso.sim.sim}, let $\phi \colon J' \to J$ be an $R$-module isomorphism such that $\alpha N_{J'} = N_J \phi$.  Take $x := \phi(1_{J'})$.  Since $N_J(x) = \alpha$, Lemma \ref{round} gives that $\alpha N_J \cong N_J$.  As $N_J$ is also isomorphic to $\alpha N_{J'}$, we conclude \eqref{iso.sim.iso}.  The converse is trivial.
\end{proof}

In the corollary, the inclusion of \eqref{iso.sim.iso} seems to be new, even in the case where $R$ is a field.  Omitting that, in the special case where $R$ is a field of characteristic $\ne 2, 3$, the equivalence of \eqref{iso.sim.isotopic} and \eqref{iso.sim.sim} and Proposition \ref{E6.iso} below can be found as Theorems 7 and 10 in \cite{Jac:ex}.

\subsection*{Albert algebras and groups of type $\ee_6$}
The stabilizer of the cubic form $N_J$ in $\bfGL(J)$ is a closed sub-group-scheme denoted $\bfIsom(J)$.  It contains $\bfAut(J)$ as a natural sub-group-scheme.
Arguing as in the proof of Lemma \ref{skip.aut.alb}, one finds that $\bfIsom(J)$ is a simple affine group scheme that is simply connected of type $\ee_6$.  (In the case where $R$ is an algebraically closed field, this claim is verified in \cite[11.20, 12.4]{Sp:jord}, or see \cite[Th.~7.3.2]{Sp:ex} for the case where $R$ is a field of characteristic different from 2, 3.)  Compare \cite[Lemma 2.3]{Alsaody} or \cite[App.~C]{Conrad:Z}.  
Moreover, $\bfIsom(J)$ is a ``pure inner form'' in the sense of \cite[\S3]{Conrad:Z}, resp.~``strongly inner'' in \cite[Def.~2.2.4.9]{CalmesFasel}, meaning that it is obtained by twisting the group scheme $\bfIsom(J_0)$ for the split Albert algebra $J_0$ by a class with values in $\bfIsom(J)$.  We note that the center of $\bfIsom(J)$ is the group scheme $\mu_3$ of cube roots of unity operating on $J$ by scalar multiplication 

Faithfully flat descent shows that the set $H^1(R, \bfIsom(J))$ is in bijection with isomorphism classes of pairs $(M, f)$, where $M$ is a projective module of the same rank as $J$ and $f$ is a cubic form on $M$ --- i.e., an element of $\Sym^3(M^*)$ --- such that $f \otimes S$ is isomorphic to the norm on $\Her_3(\Zor(S))$ for some faithfully flat $S \in \Ralg$.  For every Albert $R$-algebra $J$ and every $\alpha \in R^\times$, $(J, \alpha N_J)$ is such a pair by Example \ref{norm.surj}.  
 In the special case where $R$ is a field, every such pair $(M, f)$ --- i.e., every element of $H^1(R, \bfIsom(J))$ --- is of the form $(J, \alpha N_J)$ for some $J$ and $\alpha \in \Rx$, see \cite[9.12]{G:lens} in general or \cite{Sp:cubic} for the case of characteristic $\ne 2, 3$. 
 
\subsection*{Outer automorphism of $\bfIsom(J)$}
Suppose $J$ and $J'$ are Freudenthal $R$-algebras and $\phi \colon J \to J'$ is an isomorphism of $R$-modules.  Since the bilinear form $T_{J'}$ is regular, there is a unique $R$-linear map $\phi^\dagger \colon J \to J'$ such that $T_{J'}(\phi x, \phi^\dagger y) = T_J(x, y)$ for all $x, y \in J$.  Because $T_J$ and $T_{J'}$ are symmetric, we have $(\phi^\dagger)^\dagger = \phi$ for all $\phi$.  If $J''$ is another Freudenthal $R$-algebra and $\psi \colon J' \to J''$ is an $R$-linear bijection, then $(\phi \psi)^\dagger = \phi^\dagger \psi^\dagger$.
 
\begin{prop} \label{STRUG}
Let $J$ be a Freudenthal $R$-algebra.
\begin{enumerate}
\item \label{STRUG.1} If $\phi \in \GL(J)$ is such that $N_J \phi = \alpha N_J$ for some $\alpha \in R^\times$, then $N_J \phi^{\dagger} = \alpha^{-1} N_J$.
\item \label{STRUG.2} The map $\phi \mapsto \phi^\dagger$ is an automorphism of $\bfIsom(J)$ of order 2 that is not an inner automorphism.
\item \label{STRUG.3} For $\phi$ as in \eqref{STRUG.1} or in $\bfIsom(J)(R)$, $\phi^\dagger = \phi$ if and only if $\phi$ is an automorphism of $J$.
\end{enumerate}
\end{prop}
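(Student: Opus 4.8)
The plan is to deduce all three parts from a single transformation rule for the adjoint: \emph{if $\phi\in\GL(J)$ satisfies $N_J\phi=\alpha N_J$ with $\alpha\in R^\times$, then $(\phi x)^\sharp=\alpha\,\phi^\dagger(x^\sharp)$ identically.} To prove this I would differentiate $N_J(\phi(x+ty))=\alpha\,N_J(x+ty)$ and compare coefficients of $t$; using $(\grd_yN_J)(x)=T_J(x^\sharp,y)$ from \eqref{cubad} gives $T_J((\phi x)^\sharp,\phi y)=\alpha\,T_J(x^\sharp,y)$ for all $x,y$ over all scalar extensions. Rewriting the right side via the defining property $T_J(\phi^\dagger b,\phi c)=T_J(b,c)$ of $\dagger$ turns this into $T_J\bigl((\phi x)^\sharp-\alpha\,\phi^\dagger(x^\sharp),\,\phi y\bigr)=0$ for all $y$; since $T_J$ is regular (Proposition \ref{Frd.cubic}) and $\phi$ is bijective, the rule follows.

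For \eqref{STRUG.1} I would specialize to invertible $x$. Then $x^\sharp=N_J(x)x^{-1}$ by Lemma \ref{cub.comp}\eqref{inv.char}, while $N_J(\phi x)=\alpha N_J(x)\in R^\times$ makes $\phi x$ invertible with $(\phi x)^\sharp=\alpha N_J(x)(\phi x)^{-1}$; comparing with the transformation rule and cancelling the unit $\alpha N_J(x)$ gives $\phi^\dagger(x^{-1})=(\phi x)^{-1}$. Taking norms and using $N_J(z^{-1})=N_J(z)^{-1}$ yields $N_J(\phi^\dagger w)=\alpha^{-1}N_J(w)$ for every invertible $w$. Since invertibility of $w$ is equivalent to $N_J(w)$ being a unit (Lemma \ref{cub.comp}\eqref{inv.char}), the polynomial law $N_J\phi^\dagger-\alpha^{-1}N_J$ vanishes whenever $N_J$ represents a unit, hence identically by Lemma \ref{law.dense} (with $g=N_J$, which takes the value $1$ at $1_J$).

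Part \eqref{STRUG.2} is then mostly formal: by \eqref{STRUG.1} with $\alpha=1$, $\dagger$ carries $\bfIsom(J)$ into itself; it is a morphism of $R$-group schemes because it is built from the regular form $T_J$ and commutes with base change; it is a homomorphism since $(\phi\psi)^\dagger=\phi^\dagger\psi^\dagger$, and an involution since $(\phi^\dagger)^\dagger=\phi$ (both recalled just before the proposition) — so $\dagger$ is an automorphism of order dividing $2$. The one genuinely substantive point, which I expect to be the main obstacle, is showing $\dagger$ is not inner without invoking the structure theory of type $\ee_6$ over a field; I would argue this uniformly over $R$ using the center. The center $Z(\bfIsom(J))=\mu_3$ is embedded as $\zeta\mapsto\zeta\,\Id_J$, and a one-line computation from the definition of $\dagger$ gives $(\zeta\,\Id_J)^\dagger=\zeta^{-1}\,\Id_J$. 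Thus $\dagger$ restricts to the inversion automorphism of $\mu_3$, which is nontrivial over any nonzero ring, whereas every inner automorphism of a group scheme fixes its center pointwise. Hence $\dagger$ is not inner, and in particular $\dagger\neq\mathrm{id}$, so it has order exactly $2$.

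For \eqref{STRUG.3}, the ``if'' direction is quick: an automorphism $\phi$ of $J$ fixes $1_J$ and satisfies $N_J\phi=N_J$, hence preserves $T_J$ by the computation in the proof of Lemma \ref{HOCUJO}, and comparing $T_J(\phi x,\phi y)=T_J(x,y)$ with the definition of $\dagger$ and using regularity of $T_J$ gives $\phi^\dagger=\phi$. For ``only if'', suppose $\phi^\dagger=\phi$ and $N_J\phi=\alpha N_J$. Part \eqref{STRUG.1} forces $\alpha N_J=\alpha^{-1}N_J$, hence $\alpha^2=1$ (evaluate at $1_J$). Now the transformation rule reads $(\phi x)^\sharp=\alpha\,\phi(x^\sharp)$, which polarizes to $\phi x\times\phi y=\alpha\,\phi(x\times y)$; feeding this and $T_J(\phi x,\phi y)=T_J(x,y)$ into the formula \eqref{uop.cub} and using $\alpha^2=1$ shows $\phi(U_xy)=U_{\phi x}(\phi y)$ for all $x,y$. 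Cancelling the bijection $\phi$ in $U_{\phi(1_J)}\phi=\phi$ gives $U_{\phi(1_J)}=\Id_J$, so $\phi(1_J)=\zeta 1_J$ with $\zeta^2=1$ by Example \ref{Frd.squares}; taking norms gives $\alpha=N_J(\phi 1_J)=\zeta^3=\zeta$, so $\phi(1_J)=\alpha 1_J$. In particular, when $\phi\in\bfIsom(J)(R)$ (so $\alpha=1$) we get $\phi(1_J)=1_J$, and as $\phi$ respects $U$ and is bijective it is an automorphism of $J$; the general similarity case reduces to this after replacing $\phi$ by $\alpha^{-1}\phi\in\bfIsom(J)(R)$.
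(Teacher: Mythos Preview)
Your arguments for \eqref{STRUG.1} and \eqref{STRUG.2} are correct, but take a different route from the paper. The paper's key move is to derive the closed formula $\phi^\dagger = U_{\phi(1_J)}^{-1}\,\phi$ directly from the isotopy interpretation of norm similitudes (Proposition~\ref{isotopy.similar} together with \eqref{tr.isotope}); then \eqref{STRUG.1} follows in one line from $N_J(U_z w)=N_J(z)^2N_J(w)$, and for \eqref{STRUG.3} the equality $\phi^\dagger=\phi$ immediately gives $U_{\phi(1_J)}=\Id_J$. Your approach, via the sharp transformation rule $(\phi x)^\sharp=\alpha\,\phi^\dagger(x^\sharp)$ and the density Lemma~\ref{law.dense}, is a valid alternative that avoids invoking the isotopy machinery, at the cost of some extra computation; the paper's formula is shorter and is itself a useful structural fact. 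Part \eqref{STRUG.2} is handled the same way in both.

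There is a genuine gap in your treatment of the general similarity case of \eqref{STRUG.3}. Your reduction ``replace $\phi$ by $\alpha^{-1}\phi\in\bfIsom(J)(R)$'' only shows that $\alpha^{-1}\phi$ is an automorphism of $J$; it does not follow that $\phi$ itself is one unless $\alpha=1$. Indeed $\phi=-\Id_J$ (over any $R$ with $2\ne0$) has $N_J\phi=-N_J$ and $\phi^\dagger=\phi$, yet $\phi(1_J)=-1_J\ne1_J$, so $\phi$ is not a Jordan automorphism. The paper's own proof in fact only treats the case $\alpha=1$: it begins ``suppose $\phi^\dagger=\phi$. Then $N_J\phi=N_J$'', and the rest of the argument (via $U_{\phi(1_J)}=\Id_J$, Example~\ref{Frd.squares}, and $\zeta^3=1$) proceeds under that hypothesis. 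Your proof of \eqref{STRUG.3} is correct for $\phi\in\bfIsom(J)(R)$, which is the only case used later in the paper; but your claimed extension to arbitrary similitudes does not go through, and the example above shows it cannot.
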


\begin{proof}
\eqref{STRUG.1}: Put $u := \phi(1_J)^{-1}$.  On the one hand,  
\[
T_J(x,y) = T_{\uiso{J}}(\phi(x), \phi(y))
\]
for all $x, y \in J$,  because $\phi$ is an isomorphism $J \to \uiso{J}$ by Proposition \ref{isotopy.similar}.  On the other hand, \eqref{tr.isotope} yields
\[
T_{\uiso{J}}(\phi(x), \phi(y)) = T_J(U_u \phi(x), \phi(y)).
\]
Therefore,
\begin{equation} \label{STRUG.4}
\phi^{\dagger} = U_{\phi(1_J)}^{-1} \phi.
\end{equation}
To complete the proof of \eqref{STRUG.1}, we note by Lemma~\ref{cub.comp}(\ref{nor.comp}) that 
\[
N_J \phi^\dagger = N_J U_u \phi = N_J(u)^2 N_J \phi = \alpha^{-1} N_J.
\]

For \eqref{STRUG.2}, we only have to check that the map is not an inner automorphism.  Let $S \in \Ralg$ be such that there exists $\zeta \in \mu_3(S)$ such that $\zeta \ne 1$.  Then $\zeta^\dagger = \zeta^{-1} \ne \zeta$ and $\zeta$ is in the center of $\bfIso(J)(R)$, proving that the automorphism is not inner (and not the identity).

For \eqref{STRUG.3}, suppose $\phi^\dagger = \phi$.  Then $N_J \phi = N_J$.  By \eqref{STRUG.4}, $U_{\phi(1_J)} = \Id_J$, so $\phi(1_J) = \zeta 1_J$ for some $\zeta \in R$ with $\zeta^2 = 1$ (Example \ref{Frd.squares}).  Yet $1 = N_J(1_J) = N_J \phi(1_J)$, so $\zeta^3$ also equals 1, whence $\phi(1_J) = 1_J$.  Lemma \ref{HOCUJO} shows that $\phi$ is an automorphism of $J$.  Conversely, if $\phi$ is an automorphism of $J$, then $u = 1_J$, so $\phi^\dagger = \phi$ by \eqref{STRUG.4}. 
\end{proof}

%We say that Albert $R$-algebras $J$ and $J'$ are \emph{isometric} if there is an $R$-linear bijection $\phi \colon J \to J'$ such that $N_{J'} \phi = N_J$ as polynomial laws.

\begin{prop} \label{E6.iso}
Let $J$ and $J'$ be Albert $R$-algebras.  Among the statements
\begin{enumerate}
\item \label{E6.giso} $\bfIsom(J) \cong \bfIsom(J')$.
\item \label{E6.psim} There is a line bundle $L$ and isomorphism $h \colon L^{\otimes 3} \to R$ such that $(J', N_{J'}) \cong [L, h] \cdot (J, N_J)$ for $\cdot$ as defined in \S\ref{directional.sec}.
\item \label{E6.isotopic} $J$ and $J'$ are isotopic.
\end{enumerate}
we have the implications \eqref{E6.giso} $\Leftrightarrow$ \eqref{E6.psim} $\Leftarrow$ \eqref{E6.isotopic}.  If $\Pic R$ has no 3-torsion other than zero, then all three statements are equivalent.
 \end{prop}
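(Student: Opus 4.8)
The plan is to establish $(3)\Rightarrow(2)$ and $(2)\Leftrightarrow(1)$ for an arbitrary base ring, and then to deduce $(2)\Rightarrow(3)$ under the hypothesis on $\Pic R$. The two easy implications come first. For $(3)\Rightarrow(2)$: if $J$ and $J'$ are isotopic, then Proposition~\ref{isotopy.similar} produces a surjective $R$-linear map $\phi\colon J\to J'$ with $N_{J'}\phi=\alpha N_J$ for some $\alpha\in R^\times$; as $J$ and $J'$ are finitely generated projective of the same rank, $\phi$ is an isomorphism, so $(J',N_{J'})\cong(J,\alpha N_J)=\qform{\alpha}\cdot(J,N_J)$ by Example~\ref{disc.module}, which is the case $L=R$ of $(2)$. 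For $(2)\Rightarrow(1)$: by construction the cubic form underlying $[L,h]\cdot(J,N_J)$ is obtained from $N_J$ by the isomorphism of Lemma~\ref{pol.twisting}\eqref{pol.twist} and a trivialization of its target, so it is projectively similar to $N_J$; since $\bfIsom(J)$ is by definition the stabilizer of $N_J$ in $\bfGL(J)$, Lemma~\ref{proj.sim} gives $\bfIsom(J)\cong\bfIsom(J')$.

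The substance of the proposition is $(1)\Rightarrow(2)$, which I would prove cohomologically. Write $G_0:=\bfIsom(J_0)$ for the split $R$-group, a form of the pinned simply connected Chevalley group of type $\ee_6$ over $\ZZ$, with center $\mu_3$ acting on $J_0$ by scalar multiplication. As recorded just before the statement, faithfully flat descent identifies $H^1(R,G_0)$ with the set of isomorphism classes of pairs $(M,f)$ fppf-locally isomorphic to $(J_0,N_{J_0})$, and under this identification the class $\xi$ of $(J,N_J)$ satisfies $\bfIsom(J)={}_{\xi}G_0$, while similarly $\bfIsom(J')={}_{\xi'}G_0$ for the class $\xi'$ of $(J',N_{J'})$. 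Since every form of $G_0$ is split by an \'etale cover \cite[Cor.~XXIV.4.1.6]{SGA3.3}, isomorphism classes of forms of $G_0$ as group schemes are classified by $H^1(R,\bfAut(G_0))$, and $\bfAut(G_0)=G_0^{\mathrm{ad}}\rtimes(\ZZ/2)$ since $\ee_6$ is pinnable over $\ZZ$ \cite[Exp.~XXIV]{SGA3.3}; hence $\bfIsom(J)\cong\bfIsom(J')$ if and only if $\xi$ and $\xi'$ have the same image in $H^1(R,\bfAut(G_0))$. I would then unwind this image along the two exact sequences $1\to\mu_3\to G_0\to G_0^{\mathrm{ad}}\to 1$ and $1\to G_0^{\mathrm{ad}}\to\bfAut(G_0)\to\ZZ/2\to 1$. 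Centrality of $\mu_3$ shows that two classes of $H^1(R,G_0)$ have the same image in $H^1(R,G_0^{\mathrm{ad}})$ exactly when they lie in a common orbit of $H^1(R,\mu_3)$; and because $\mu_3\subseteq G_0$ acts on $J_0$ by scalars, that action on pairs is precisely $(M,f)\mapsto[L,h]\cdot(M,f)$ (the trivialization $h$ entering when one identifies the twisted target $R\otimes L^{\otimes 3}$ with $R$, exactly as in Example~\ref{disc.module}). The second sequence contributes only a possible further ambiguity by the outer automorphism $\theta$ of $G_0$; to remove it I would check that $\theta$ acts trivially on $H^1(R,G_0)$, equivalently that every Albert algebra is isomorphic to its ``dual'' pair $(J^{*},N_{J^{*}})$ (with $N_{J^{*}}$ the cubic form on $J^{*}$ dual to $N_J$) via the trace isomorphism $T_J\colon J\xrightarrow{\sim}J^{*}$. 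This reduces to the polynomial identity $N_{J^{*}}\circ T_J=N_J$, which holds by faithfully flat descent from the split case --- concretely, in the model $\Mat_3$ it amounts to $\det(\mathrm{adj}\,x)=(\det x)^2$ together with $\mathrm{adj}\,\mathrm{adj}\,x=(\det x)x$. With $\theta$ disposed of, the two sequences yield exactly $\bfIsom(J)\cong\bfIsom(J')\iff(J',N_{J'})\cong[L,h]\cdot(J,N_J)$, i.e.\ $(1)\Leftrightarrow(2)$.

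Finally, assume $\Pic R$ has no nonzero $3$-torsion and suppose $(2)$ holds. Then $h\colon L^{\otimes 3}\to R$ forces $[L]$ to be $3$-torsion in $\Pic R$, hence $L\cong R$, so $[L,h]\cong\qform{\alpha}$ for some $\alpha\in R^\times$ by Example~\ref{disc.module}; thus $N_{J'}\cong\alpha N_J$, and Corollary~\ref{iso.sim} gives $(3)$. Combined with the unconditional $(1)\Leftrightarrow(2)$ and $(3)\Rightarrow(2)$, all three statements are equivalent. The main obstacle throughout is the step $(1)\Rightarrow(2)$ --- specifically the bookkeeping of the fibers of $H^1(R,G_0)\to H^1(R,G_0^{\mathrm{ad}})\to H^1(R,\bfAut(G_0))$, and within it the verification that the outer automorphism of $\ee_6$ does not enlarge the projective-similarity class of $N_J$, i.e.\ the self-duality $(J^{*},N_{J^{*}})\cong(J,N_J)$.
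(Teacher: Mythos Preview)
Your argument is correct and follows the same cohomological skeleton as the paper: both reduce $(1)\Rightarrow(2)$ to the exact sequences for $\mu_3 \hookrightarrow G_0 \twoheadrightarrow G_0^{\mathrm{ad}}$ and $G_0^{\mathrm{ad}} \hookrightarrow \bfAut(G_0) \twoheadrightarrow \ZZ/2$, and both dispose of the $\ZZ/2$-ambiguity by exhibiting the outer automorphism already over $R$. The difference is organizational. The paper twists the base point to $J$ and then observes that the map $\bfAut(\bfIsom(J))(R)\to(\ZZ/2)(R)$ is surjective because the involution $\phi\mapsto\phi^\dagger$ of Proposition~\ref{STRUG} lives over $R$ for \emph{every} $J$; this makes the kernel of $H^1(R,\bfIsom(J)/\mu_3)\to H^1(R,\bfAut(\bfIsom(J)))$ trivial. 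You instead stay at the split base point and argue that for the split extension the fibers of $H^1(R,G_0^{\mathrm{ad}})\to H^1(R,\bfAut(G_0))$ are exactly the $(\ZZ/2)(R)$-orbits, and then show the orbit of any class coming from an Albert algebra is a singleton via self-duality.

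The one place to tighten is your verification that $\theta$ fixes the class of $(J,N_J)$. The statement is right, and $T_J\colon J\xrightarrow{\sim}J^*$ is the correct witness, but the identity you cite (``$\det(\mathrm{adj}\,x)=(\det x)^2$'' etc.) is not what is being checked. What you actually need is that $T_{J_0}$ intertwines the $\theta$-twisted $G_0$-action on $J_0$ with the contragredient action on $J_0^*$; concretely, $T_{J_0}(\phi^\dagger x,y)=T_{J_0}(x,\phi^{-1}y)$, which is immediate from the definition of $\phi^\dagger$. That intertwining is precisely the content of Proposition~\ref{STRUG}, so the cleanest fix is simply to cite it rather than to reprove it via adjoint identities. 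Once this is said, your argument and the paper's are formally equivalent: yours keeps the base point split and tracks $\ZZ/2$-orbits, the paper's moves the base point and reads off a trivial kernel; both rely on the same fact that $\dagger$ exists over $R$ for every $J$.
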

 
 \begin{proof}
Suppose \eqref{E6.giso}; we prove \eqref{E6.psim}.  We may assume $R$ is connected.
 
 The conjugation action gives a homomorphism $\bfIsom(J) \to \bfAut(\bfIsom(J))$, which gives a map of pointed sets
 \begin{equation} \label{isom.1}
 H^1(R, \bfIsom(J)) \to H^1(R, \bfAut(\bfIsom(J))),
 \end{equation}
 where the second set is in bijection with isomorphism classes of $R$-group schemes that become isomorphic to $\bfIsom(J)$ after base change to an fppf $R$-algebra.  By hypothesis, the class of $N_{J'} \in H^1(R, \bfIsom(J))$ is in the kernel of \eqref{isom.1}.

There is an exact sequence
\[
1 \to \bfIsom(J) / \mu_3 \to \bfAut(\bfIsom(J)) \to \ZZ/2 \to 1
\]
of fppf sheaves by \cite[Th.~XXIV.1.3]{SGA3.3}.  Since $R$ is connected, $(\ZZ/2)(R)$ has one non-identity element, and it is the image of the map $\dagger$ from Lemma \ref{STRUG}.  That is, in the exact sequence
\[
\bfAut(\bfIsom(J))(R) \to (\ZZ/2)(R) \to H^1(R, \bfIsom(J)/\mu_3) \to H^1(R, \bfAut(\bfIsom(J))),
\]
the first map is surjective, so the third map has zero kernel and we deduce that the image of $N_{J'}$ in $H^1(R, \bfIsom(J)/\mu_3)$ is the zero class.  It follows that $N_{J'}$ is in the image of the map 
\[
H^1(R, \mu_3) \to H^1(R, \bfIsom(J)),
\]
which is the orbit of the zero class $N_J$ under the action of the group $H^1(R, \mu_3)$, which is \eqref{E6.psim}. 

That \eqref{E6.psim} implies \eqref{E6.giso} is Lemma \ref{proj.sim}.
The claimed implications between \eqref{E6.isotopic} and \eqref{E6.psim} are Corollary \ref{iso.sim}.
 \end{proof}

\section{Freudenthal triple systems} \label{FT.sec}

In this section, we define Freudenthal triple systems, also known as FT systems.  We will see in Theorem \ref{E7.thm} in the next section that they play the same role relative to groups of type $\ee_7$ that forms of the norm on an Albert algebra play for groups of type $\ee_6$.

For any Albert $R$-algebra $J$, define  $\Frd(J)$ to be the rank 56 projective $R$-module $R \oplus R \oplus J \oplus J$ endowed with a 4-linear form $\Psi$ and an alternating bilinear form $b$, defined as follows.   

We write an element of $\Frd(J)$ as $\stbtmat{\alpha}{x}{x'}{\alpha'}$ for $\alpha, \alpha'  \in R$ and $x, x' \in J$.  Define
\begin{equation} \label{Frd.b.def}
b_J \left( \stbtmat{\alpha}{x}{x'}{\alpha'}, \stbtmat{\beta}{y}{y'}{\beta'} \right) := \alpha \beta' - \alpha' \beta  + T_J(x, y') - T_J(x', y).
\end{equation}
As an intermediate step to defining $\Psi$, define a quartic form
\begin{equation} \label{Frd.q.def}
q_J \stbtmat{\alpha}{x}{x'}{\alpha'} = -4T_J(x^\nat, x'^\nat)+ 4\alpha N_J(x) + 
  4 \alpha' N_J(x') + (T_J(x, x') - \alpha \alpha')^2,
  \end{equation}
compare \cite[p.~87]{Brown:E7} or \cite[p.~940]{Krut:E7}.  

To define the 4-linear form, consider first the case $R = \ZZ$ and $J := \Her_3(\Zor(\ZZ))$.  (The following definitions are inspired by \cite[\S6]{Lurie}.)
Putting $X_i$ for an element of $\Frd(J)$ and $t_i$ for an indeterminate, the coefficient of $t_1 t_2 t_3 t_4$ in $q(\sum t_i X_i)$, equivalently, the 4-linear form 
\[
(X_1, X_2, X_3, X_4) \mapsto  \grd_{X_1} \grd_{X_2} \grd_{X_3} q(X_4)
\]
on $Q(J)$, equals $2\Theta$ for a symmetric 4-linear form $\Theta$.  Define 4-linear forms $\Phi_i$  via
\begin{align}
\Phi_1(X_1, X_2, X_3, X_4) &= b(X_1,  X_2)\, b(X_3, X_4) \\
\Phi_2(X_1, X_2, X_3, X_4) &= b(X_1,  X_3)\, b(X_4, X_2) \label{FT.Phi.def} \\
\Phi_3(X_1, X_2, X_3, X_4) &= b(X_1,  X_4)\, b(X_2, X_3).
\end{align}
Then $\Theta + \sum \Phi_i$ is divisible by 2 as a 4-linear function on $Q(\Zor(\ZZ))$ and we set 
\begin{equation} \label{Psi.def}
\Psi_{\Her_3(\Zor(\ZZ))} := \frac12 (\Theta + \sum \Phi_i).
\end{equation}
As $\Theta$ is symmetric, $\Psi$ is evidently stable under even permutations of its arguments, and we have:
\[
\Psi(X_1, X_2, X_3, X_4)  - \Psi(X_2, X_1, X_3, X_4) = \sum \Phi_i.
\]
% (The analogous formula in \cite[p.~572]{Lurie} differs by a sign, a typo.)

For any ring $R$, we define $\Psi_{\Her_3(\Zor(R))} := \Psi_{\Her_3(\Zor(\ZZ))} \otimes R$, and we define $\Psi_J$ for an Albert $R$-algebra $J$ by descent.

\begin{defn} \label{FTS.def}
A \emph{Freudenthal triple system}\footnote{See p.~273 of \cite{Sp:e7} for remarks on the history of this term.} or \emph{FT system} $(M, \Psi, b)$ is an $R$-module $M$ endowed with a 4-linear form $\Psi$ and an alternating bilinear form $b$, such that $(M, \Psi, b) \otimes S$ is isomorphic (in an obvious sense) to $\Frd(J)$ for some faithfully flat $S \in \alg{R}$ and some Albert $S$-algebra $J$. 
\end{defn}

\subsection*{Comparison with other definitions}
Suppose for this paragraph that 6 is invertible in $R$.  
Given an FT system $(M, \Psi, b)$, we may define 4-linear forms $\Phi_i$ on $M$ via \eqref{FT.Phi.def} and recover $\Theta$ and $q$ via
\begin{equation} \label{q.recover}
\Theta := 2 \Psi - \sum \Phi_i \quad \text{and} \quad  \Theta(X,X,X,X) = 12q(X)
\end{equation}
 as polynomial laws in $X$.   (This last is a special case of the general fact that going from a homogeneous form of degree $d$ to a $d$-linear form and back to a homogeneous form of degree $d$ equals multiplication by $d!$ \cite[\S{IV.5.8}, Prop.~12(i)]{Bou:alg2}.)  Since the form $b$ is regular and $\Theta$ is symmetric, the equation
\[
\Theta(X_1, X_2, X_3, X_4) = b(X_1, t(X_2, X_3, X_4))
\]
implicitly defines a symmetric 3-linear form $t \colon M \times M \times M \to M$, and $\bfAut(M, \Psi, b)$ equals  $\bfAut(M, t, b)$.  That is, under the hypothesis that 6 is invertible in $R$, we would obtain an equivalent class of objects if we replaced the asymmetric 4-linear form $\Psi$ in the definition of FT systems with the quartic form $q$ (the version studied in \cite{Brown:E7}) or with the trilinear form $t$ (the version studied in \cite{Meyb:FT}).

\subsection*{Similarity of FT systems}
For a $d$-linear form $f$ on an $R$-module $M$, i.e., an $R$-linear map $f \colon M^{\otimes d} \to R$, and a $d$-trivialized line bundle $[L, h] \in H^1(R, \mu_d)$, we define a $d$-linear form $[L, h] \cdot f$ on $M \otimes L$ via the composition
\[
(M \otimes L)^{\otimes d} \xrightarrow{\sim} M^{\otimes d} \otimes L^{\otimes d} \xrightarrow{f \otimes h} R.
\]
For $\Frd := (M, \Psi, b)$ an FT system and a discriminant module $[L, h] \in H^1(R, \mu_2)$, we define $[L, h] \cdot \Frd$ to be the triple consisting of the module $M \otimes L$, the 4-linear form $[L, h^{\otimes 2}] \cdot \Psi$ for $[L, h^{\otimes 2}] \in H^1(R, \mu_4)$, and the bilinear form $[L, h] \cdot b$.  Since $\qform{1} \cdot \Frd$ is $\Frd$ itself, we deduce that $[L, h] \cdot \Frd$ is also an FT system.  We say that FT systems $\Frd$, $\Frd'$ are \emph{similar} if $\Frd' \cong [L, h] \cdot \Frd$ for some $[L, h] \in H^1(R, \mu_2)$.  For example, for any FT system $(M, \Psi, b)$ and any $\alpha \in R^\times$, $(M, \Psi, b)$ and $(M, \alpha^2 \Psi, \alpha b)$ are similar.

\begin{eg}
Suppose $(M, \Psi, b) = \Frd(J)$ for some Albert $R$-algebra $J$.  Then for every $\mu \in R^\times$, the map
\[
\stbtmat{\alpha}{x}{x'}{\alpha'} \mapsto \stbtmat{\alpha/\mu}{\mu x}{x'}{\mu^2 \alpha'}
\]
is an isomorphism $\qform{\mu} \cdot \Frd(J) \xrightarrow{\sim} \Frd(J)$.  One checks this for $R = \ZZ$ and $J = \Her_3(\Zor(\ZZ))$ using \eqref{Frd.b.def} and \eqref{Frd.q.def}.  It follows for general $R$ and $J$ by base change and twisting.
\end{eg}

%----------------------------------------------------------------------------------------------------------------
%

\section{Groups of type \texorpdfstring{$\ee_7$}{E7}} \label{E7.sec}

We will now relate FT systems as defined in the previous section to affine group schemes of type $\ee_7$.  Here is a tool that allows us to work with the quartic form $q$ as in \eqref{Frd.q.def} rather than the less-convenient 4-linear form $\Psi$, while still getting results that hold when 6 is not invertible.

\begin{lem} \label{perm.lem}
Let $(M, \Psi, b)$ be an FT system over $\ZZ$, let $\bfG$ be a closed subgroup of $\bfGL(M)$, and let $F$ be a field of characteristic zero.  If $\bfG(F)$ is dense in $\bfG$ (which holds if $\bfG$ is connected) and $\bfG(F)$ preserves $b \otimes F$ and the quartic form $q$ over $F$ defined by \eqref{q.recover}, then $\bfG$ is a closed sub-group-scheme of $\bfAut(M, \Psi, b)$.
\end{lem}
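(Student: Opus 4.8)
The plan is to check that the closed immersion $\bfG \hookrightarrow \bfGL(M)$ factors through the closed subgroup scheme $\bfAut(M,\Psi,b)$, which is the scheme-theoretic stabilizer of the pair $(\Psi,b)$ --- an element of the affine space $\W\big((M^*)^{\otimes 4}\big)\times\W(\wedge^2 M^*)$, a free $\ZZ$-module of finite rank because $M$ is. Writing $H := \bfG \cap \bfAut(M,\Psi,b)$, a closed subscheme of $\bfG$, it suffices to prove $H=\bfG$; and since the hypothesis is that $\bfG(F)$ is (scheme-theoretically) dense in $\bfG$, it is enough to show $\bfG(F)\subseteq\bfAut(M,\Psi,b)(F)$, i.e. that each $g\in\bfG(F)$ fixes both $b\otimes F$ and $\Psi\otimes F$.

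Fixing $b\otimes F$ is part of the hypothesis, and it implies that $g$ also fixes each of the auxiliary $4$-linear forms $\Phi_1,\Phi_2,\Phi_3$ over $F$, since by \eqref{FT.Phi.def} these are built out of $b$ alone. Because $\car F = 0$, the integer $4!$ is invertible in $F$, so over $F$ the quartic form $q$ and the symmetric $4$-linear form $\Theta$ of \eqref{q.recover} are interchangeable data: $\Theta$ is recovered from $q$ by full polarization (divided by the appropriate factorial) and $q$ from $\Theta$ by restriction to the diagonal, and both operations commute with the linear substitution $X\mapsto gX$. Hence $g$ fixes $q\otimes F$ --- the other hypothesis --- if and only if it fixes $\Theta\otimes F$. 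Combining this with the fact that $g$ fixes each $\Phi_i$ and that $\Psi = \tfrac12\big(\Theta + \sum_i \Phi_i\big)$ over $F$ (where $2$ is invertible), we conclude that $g$ fixes $\Psi\otimes F$ as well, so $\bfG(F)\subseteq\bfAut(M,\Psi,b)(F)$.

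It remains to descend this from $F$-points to the $\ZZ$-scheme $\bfG$. The locus where $b$ and $\Psi$ are fixed is closed, so $H$ is a closed subscheme of $\bfG$ containing every $F$-point; by the density hypothesis, $H=\bfG$. (The parenthetical in the statement records that this density is automatic when $\bfG$ is connected: then $\bfG_\QQ$ is dense in $\bfG$, and, being a finite-type group scheme over a field of characteristic $0$, it is smooth by Cartier's theorem, so its $F$-rational points are Zariski dense.) The factorization $\bfG\to\bfAut(M,\Psi,b)$ obtained this way is then a closed immersion of group schemes. The step I expect to require the most care is precisely this last descent: one must use ``dense in $\bfG$'' in the scheme-theoretic sense over $\ZZ$, i.e. ensure that the coordinate ring of $\bfG$ has no positive-characteristic components that all $F$-points miss --- which is exactly what the connectedness (or flatness over $\ZZ$) of $\bfG$ guarantees. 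Everything else --- exhibiting $\bfAut(M,\Psi,b)$ as a stabilizer, and the elementary characteristic-zero linear algebra relating $q$, $\Theta$, the $\Phi_i$, and $\Psi$ --- is routine.
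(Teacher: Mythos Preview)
Your proof is correct and follows essentially the same two-step structure as the paper's: first use characteristic-zero linear algebra to pass from $q$ and $b$ to $\Psi$ (via $\Theta$ and the $\Phi_i$), then descend from $F$ to $\ZZ$. The only difference is in how the descent is packaged. You argue directly that the closed subscheme $H=\bfG\cap\bfAut(M,\Psi,b)$ contains the (scheme-theoretically) dense set $\bfG(F)$ and hence equals $\bfG$. The paper instead views $b$ and $\Psi$ as elements of the $\bfG$-representation $V=(M^*)^{\otimes d}$ and invokes the flat base change isomorphism $V^{\bfG}\otimes F\xrightarrow{\sim}(V\otimes F)^{\bfG\times F}$ (citing Seshadri), concluding that $b,\Psi\in V^{\bfG}$ from $b\otimes 1,\Psi\otimes 1\in(V\otimes F)^{\bfG\times F}$. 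These are equivalent: your density hypothesis in the scheme-theoretic sense forces $\mathcal{O}(\bfG)$ to be $\ZZ$-torsion-free, which is exactly what makes the injection $V\hookrightarrow V\otimes F$ detect membership in $V^{\bfG}$. Your route is more geometric and avoids the external reference; the paper's route is a one-line citation but hides the same torsion-freeness issue inside the lemma.
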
 

\begin{proof}
Since $\bfG(F)$ is dense in $\bfG$, the group scheme $\bfG \times F$ preserves $b \otimes F$ and $q$, whence also $\Psi \otimes F$.  Viewing $b$ and $\Psi$ as elements of the representation $V := (M^*)^{\otimes d}$ of $\bfG$ for $d = 2$ or 4, the natural map $V^\bfG \otimes F \to (V \otimes F)^{\bfG \times F}$ is an isomorphism because $F$ is flat over $\ZZ$ \cite[Lemma 2]{Sesh:GR}, so $\bfG$ preserves $b$ and $\Psi$.
\end{proof}

\begin{cor} \label{E6inE7}
For every Freudenthal $R$-algebra $J$, there is an inclusion $f \colon \bfIsom(J) \hookrightarrow \bfAut(\Frd(J))$ via
\[
f(\phi) \stbtmat{\alpha}{x}{x'}{\alpha'} = \stbtmat{\alpha}{\phi(x)}{\phi^\dagger(x')}{\alpha'}.
\]
\end{cor}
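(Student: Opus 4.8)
First I would dispose of the formal parts. The map $f$ is a morphism of group schemes: it is visibly functorial in $S$, and the identity $(\phi\psi)^\dagger=\phi^\dagger\psi^\dagger$ recorded just before Proposition~\ref{STRUG} (together with $\Id^\dagger=\Id$) gives $f(\phi\psi)=f(\phi)f(\psi)$ and $f(\Id)=\Id$ directly from the displayed formula. It is a monomorphism because $f(\phi)$ carries $\stbtmat{0}{x}{0}{0}$ to $\stbtmat{0}{\phi(x)}{0}{0}$, so $f(\phi)=\Id$ forces $\phi=\Id$. Moreover $f(\phi)$ preserves the bilinear form $b_J$ of \eqref{Frd.b.def}: using the defining relation $T_J(\phi a,\phi^\dagger b)=T_J(a,b)$ of $\dagger$ and the symmetry of $T_J$, the two trace summands in $b_J\bigl(f(\phi)\stbtmat{\alpha}{x}{x'}{\alpha'},\,f(\phi)\stbtmat{\beta}{y}{y'}{\beta'}\bigr)$ revert to $T_J(x,y')$ and $T_J(x',y)$. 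All of this holds over an arbitrary base ring.

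Next I would check that $f(\phi)$ preserves the quartic form $q_J$ of \eqref{Frd.q.def}, again over any base, for $\phi\in\bfIsom(J)(S)$. The key identity is $(\phi x)^\sharp=\phi^\dagger(x^\sharp)$: since $N_J\phi=N_J$ as polynomial laws, differentiating yields $T_J(x^\sharp,v)=(\grd_vN_J)(x)=(\grd_{\phi v}N_J)(\phi x)=T_J((\phi x)^\sharp,\phi v)$ for all $v$, and regularity of $T_J$ then gives the identity exactly as in the proof of Lemma~\ref{HOCUJO}. By Proposition~\ref{STRUG}\eqref{STRUG.1} the map $\phi^\dagger$ is again a norm isometry, so (using $(\phi^\dagger)^\dagger=\phi$) the same identity gives $(\phi^\dagger x')^\sharp=\phi((x')^\sharp)$. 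Substituting $\stbtmat{\alpha}{\phi x}{\phi^\dagger x'}{\alpha'}$ into \eqref{Frd.q.def} and invoking $N_J(\phi x)=N_J(x)$, $N_J(\phi^\dagger x')=N_J(x')$, $T_J(\phi x,\phi^\dagger x')=T_J(x,x')$, and $T_J((\phi x)^\sharp,(\phi^\dagger x')^\sharp)=T_J(\phi^\dagger(x^\sharp),\phi((x')^\sharp))=T_J(x^\sharp,(x')^\sharp)$ leaves every term of $q_J$ unchanged.

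The remaining and principal point is that $f(\phi)$ preserves the $4$-linear form $\Psi_J$. Because $\Psi_J$ is not determined by $(b_J,q_J)$ when $2$ or $3$ is not invertible, I would route this through Lemma~\ref{perm.lem}. The assertion that $f$ factors through $\bfAut(\Frd(J))$ may be checked after faithfully flat base change, and over a cover trivializing $J$ it reduces to the same assertion for the split Freudenthal $\ZZ$-algebra $J_0$ of the same rank and its group $\bfIsom(J_0)$, which is connected (of type $\ee_6$ in the Albert case). Let $\bfG\subseteq\bfGL(\Frd(J_0))$ be the scheme-theoretic image of $f$; since $\mathcal{O}(\bfIsom(J_0))$ is $\ZZ$-flat, $\bfG$ is a $\ZZ$-flat closed subgroup scheme, it is connected, and --- $f$ being a monomorphism --- $\bfG\times\QQ\cong\bfIsom(J_0)\times\QQ$ is split semisimple, so $\bfG(\QQ)$ is Zariski-dense in $\bfG\times\QQ$. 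By the two preceding paragraphs $\bfG(\QQ)$ preserves $b_{J_0}\otimes\QQ$ and the quartic form over $\QQ$ defined in \eqref{q.recover}, so Lemma~\ref{perm.lem} gives $\bfG\subseteq\bfAut(\Frd(J_0))$; hence $f$ factors through $\bfAut(\Frd(J_0))$ over $\ZZ$, and this descends to general $R$ and $J$. Finally $f$ is a closed immersion: it is a monomorphism of finitely presented affine group schemes whose source $\bfIsom(J)$ is flat (indeed smooth) over $R$ and which restricts on each geometric fibre to a monomorphism of algebraic groups over a field, hence to a closed immersion, so the fibral criterion for immersions applies.

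I expect the decisive difficulty to be the step packaged into Lemma~\ref{perm.lem}: in characteristics $2$ and $3$ one cannot detect preservation of $\Psi_J$ by a direct calculation, so one is forced to transport the question to $\QQ$-points of the split $\ZZ$-model. The care there is bookkeeping --- realizing $\bfIsom(J_0)$ inside $\bfGL(\Frd(J_0))$ as a scheme-theoretic image, checking that image is a flat closed subgroup scheme whose generic fibre has dense $\QQ$-points, and descending the conclusion back to arbitrary $R$. By contrast, the trace identities underlying the $b_J$- and $q_J$-computations are routine once $(\phi x)^\sharp=\phi^\dagger(x^\sharp)$ is in hand, and they run parallel to the proof of Lemma~\ref{HOCUJO}.
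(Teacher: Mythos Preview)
Your proof is correct and follows essentially the same route as the paper: reduce to the split $\ZZ$-model, verify that $f(\phi)$ preserves $b$ and $q$ on $\QQ$-points (you do this over any base via the identity $(\phi x)^\sharp=\phi^\dagger(x^\sharp)$, the paper just cites Proposition~\ref{STRUG}\eqref{STRUG.1}), invoke Lemma~\ref{perm.lem}, and descend by base change and twisting. Your write-up is considerably more detailed than the paper's three-sentence proof---in particular you make explicit the group-homomorphism and closed-immersion verifications that the paper leaves tacit---but the strategy is the same.
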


\begin{proof}
Consider the case $J = \Her_3(\Zor(\ZZ))$.  For $\phi \in \bfIsom(J)(\QQ)$, it follows from the definition of $\phi^\dagger$ and Proposition \ref{STRUG}\eqref{STRUG.1} that $f(\phi)$ is an isomorphism of the bilinear and quartic forms $b \otimes \QQ$ and $q$ defined by \eqref{Frd.q.def} for $J \otimes \QQ$.  The lemma gives the claim in this case.  Base change and twisting gives the claim for every $R$ and every Albert $R$-algebra $J$.
\end{proof}

\begin{cor} \label{Frd.iso}
Suppose $J$ and $J'$ are Albert $R$-algebras.  If $J$ and $J'$ are isotopic, then $\bfAut(\Frd(J)) \cong \bfAut(\Frd(J'))$.
\end{cor}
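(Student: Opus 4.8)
The plan is to reduce to the case $J' = J^{(u)}$ for an invertible $u \in J$ and then to exhibit an explicit isomorphism of FT systems $\Frd(J^{(u)}) \xrightarrow{\sim} \Frd(J)$; conjugation by such an isomorphism will produce the asserted isomorphism of automorphism group schemes. The reduction is immediate: $\Frd(-)$ is visibly functorial, so isomorphic Albert algebras have isomorphic FT systems, and every algebra isotopic to $J$ is isomorphic to $J^{(u)}$ for some invertible $u$ (and is again Albert by Corollary~\ref{iso.Frd}). So fix an invertible $u \in J$ and set $\nu := N_J(u) \in R^\times$.

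To guess the map, I would read off from \eqref{cuno.isotope} and \eqref{tr.isotope} that passing to the isotope rescales $N_J$ by $\nu$ and replaces $T_J$ by $T_J(U_u\,\cdot\,,\,\cdot\,)$, which suggests the $R$-linear map $F \colon \Frd(J^{(u)}) \to \Frd(J)$ (both built on $R \oplus R \oplus J \oplus J$) given by
\[
F\stbtmat{\alpha}{x}{x'}{\alpha'} := \stbtmat{\nu\alpha}{x}{U_u x'}{\nu^{-1}\alpha'},
\]
where the corner scalars $\nu,\nu^{-1}$ are there to absorb the rescaling of $N_J$. Since $\nu \in R^\times$ and $U_u \in \GL(J)$, this is an $R$-module isomorphism, manifestly natural in $(J,u)$ and compatible with base change. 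I would then check by a direct computation — using \eqref{Frd.b.def}, \eqref{Frd.q.def}, \eqref{cuno.isotope}, \eqref{tr.isotope}, \eqref{sharp.norm}, Lemma~\ref{cub.comp}\eqref{nor.comp}, the identities $u^\sharp = \nu u^{-1}$ and $(U_u y)^\sharp = U_{u^\sharp}y^\sharp$, and the self-adjointness of $U_u$ for $T_J$ (which follows from the symmetry of $T_{J^{(u)}}$ via the first formula in \eqref{tr.isotope}) — that $b_J(F(-),F(-)) = b_{J^{(u)}}$ and $q_J(F(-)) = q_{J^{(u)}}$ as polynomial laws over $R$; the role of the corner scalars is precisely that these match with no residual similarity factor.

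The main obstacle is the four-linear form $\Psi$: its definition \eqref{Psi.def} involves a division by $2$, so matching $q$ and $b$ does not \emph{formally} force $F$ to match $\Psi$ when $2$ is not invertible. I would dispatch this by functoriality and descent. Since $\Psi_J$ is defined by faithfully flat descent from the split case, and a split Albert algebra (over any base) carrying an invertible element is the base change, along a suitable ring map, of the split Albert algebra over the $\ZZ$-algebra $R_0$ that is the coordinate ring of the open subscheme $\{N \neq 0\}$ of $\W(\Her_3(\Zor(\ZZ)))$, carrying its generic invertible element — and since $F$, $\Psi$, $b$, $q$ are all functorial — it suffices to prove $\Psi_J \circ F^{\otimes 4} = \Psi_{J^{(u)}}$ in this one universal situation. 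But $R_0$ is a localization of a polynomial ring over $\ZZ$, hence an integral domain of characteristic $0$ embedding into $R_0 \otimes \QQ$; over the latter ring $6$ is invertible, so $\Psi = \frac12(\Theta + \sum\Phi_i)$ with $\Theta$ recovered from $q$ (via \eqref{q.recover} and polarization) and the $\Phi_i$ recovered from $b$ via \eqref{FT.Phi.def}, and the already-verified identities $q_J \circ F = q_{J^{(u)}}$ and $b_J(F(-),F(-)) = b_{J^{(u)}}$, together with linearity of $F$, immediately give $\Psi_J \circ F^{\otimes 4} = \Psi_{J^{(u)}}$ there. This equality of $4$-linear forms then holds over $R_0$ because $R_0 \hookrightarrow R_0 \otimes \QQ$, and hence over an arbitrary base by base change and descent.

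Consequently $F$ is an isomorphism of FT systems $\Frd(J^{(u)}) \xrightarrow{\sim} \Frd(J)$, and conjugation by $F$ (functorially in the test ring) yields $\bfAut(\Frd(J^{(u)})) \cong \bfAut(\Frd(J))$, as desired. One could instead phrase the last step as an FT-system analogue of Lemma~\ref{proj.sim} applied to $\Frd(J^{(u)})$ and $\Frd(J)$, which here are not merely similar but isomorphic; the explicit map makes that detour unnecessary.
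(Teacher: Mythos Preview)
Your argument is correct, but it follows a different path from the paper's. The paper argues cohomologically: the inclusions $\bfAut(J) \hookrightarrow \bfIsom(J) \hookrightarrow \bfAut(\Frd(J))$ (the second being Corollary~\ref{E6inE7}) induce maps $H^1(R,\bfAut(J)) \to H^1(R,\bfIsom(J)) \to H^1(R,\bfAut(\Frd(J)))$, and the class of $J'$ already dies at the middle stage because isotopic Albert algebras have isometric norms (Corollary~\ref{iso.sim}); hence $\Frd(J') \cong \Frd(J)$ as FT systems. Your approach instead writes down the explicit $R$-linear map $F\colon \Frd(J^{(u)}) \to \Frd(J)$, checks $b$ and $q$ by hand from \eqref{cuno.isotope}, \eqref{tr.isotope}, \eqref{sharp.norm}, and the self-adjointness of $U_u$, and then handles $\Psi$ by reducing to a universal torsion-free base $R_0$ where one may pass to $R_0 \otimes \QQ$ and recover $\Psi$ from $(b,q)$. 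Both routes end at the same statement $\Frd(J) \cong \Frd(J')$; yours is constructive and self-contained (and incidentally makes visible the explicit isotopy-to-FT-isomorphism dictionary), while the paper's is shorter once the $\ee_6$ layer and Lemma~\ref{perm.lem}/Corollary~\ref{E6inE7} are in place. Your descent step for $\Psi$ is essentially the same mechanism the paper uses in Lemma~\ref{perm.lem}, transported from group schemes to a single linear map.
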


\begin{proof}
The inclusions $\bfAut(J) \hookrightarrow \bfIsom(J) \hookrightarrow \bfAut(\Frd(J))$ induce maps
\[
H^1(R, \bfAut(J)) \to H^1(R, \bfIsom(J)) \to H^1(R, \bfAut(\Frd(J))),
\]
where the last set classifies FT systems over $R$.  The class of $J'$ in $H^1(R, \bfAut(J))$ maps to the class of $N_{J'}$ in $H^1(R, \bfIsom(J))$, and by hypothesis and by Proposition \ref{E6.iso} this is the trivial class.  Therefore, the image of $J'$ in $H^1(R, \bfAut(\Frd(J)))$, which is $\Frd(J')$, is the trivial class.
\end{proof}

In case $R$ is a field of characteristic $\ne 2, 3$, the converse of Corollary \ref{Frd.iso} is true by \cite[Cor.~6.9]{Ferr:strict}.  That is, if $\Frd(J) \cong \Frd(J')$, then $J$ and $J'$ are isotopic.

\begin{thm}  \label{E7.thm}
The group scheme $\bfAut(\Frd(\Her_3(\Zor(R))))$ over $R$ is obtained from the simply connected Chevalley group of type $\ee_7$ over $\ZZ$ by base change.  Every strongly inner and simply connected simple $R$-group scheme of type $\ee_7$ over $R$ is of the form $\bfAut(\Frd)$ for some FT system $\Frd$.  For FT systems $\Frd$ and $\Frd'$, $\bfAut(\Frd) \cong \bfAut(\Frd')$ if and only if $\Frd$ and $\Frd'$ are similar.
\end{thm}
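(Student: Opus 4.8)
The plan is to run the descent machinery of Theorem~\ref{skip.F4.alb}, with the role of $\bfAut(J)$ now played by $\bfAut(\Frd)$, after first pinning down the split case. Write $\Frd_0 := \Frd(\Her_3(\Zor(\ZZ)))$ with underlying $56$-dimensional module $M$, and $\bfG_0 := \bfAut(\Frd_0)$, a closed finitely presented subgroup scheme of $\bfGL(M) = \bfGL_{56}$ over $\ZZ$. For every field $F$, $\bfG_0 \times F$ is the simply connected Chevalley group of type $\ee_7$ acting on its $56$-dimensional minuscule representation: when $\car F \ne 2,3$ this is classical, since by the discussion preceding Lemma~\ref{perm.lem} the group $\bfAut(\Frd_0 \times F)$ is then the stabilizer of the quartic form $q$ together with $b$, which is well known to be $\ee_7^{\mathrm{sc}}$ (\cite{Brown:E7}, \cite{Meyb:FT}, \cite{Ferr:strict}), while \cite[\S6]{Lurie} treats the remaining characteristics $2$ and $3$. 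Since $-\Id_M$ preserves the $4$-linear form $\Psi$ and the bilinear form $b$, the center $\mu_2$ of $\ee_7^{\mathrm{sc}}$ is contained in $\bfG_0$, so $\bfG_0 \times F$ is genuinely simply connected. Each $\bfG_0 \times F$ being smooth and connected, \cite[Prop.~6.1]{GanYu:G2} (or \cite[Lemma~B.1]{AlsaodyGille}) shows $\bfG_0$ is smooth over $\ZZ$; its generic fibre $\bfG_0 \times \QQ$ is split because $\Her_3(\Zor(\QQ))$ is the split Albert algebra, so $\bfG_0$ is the Chevalley group by \cite[Th.~1.4]{Conrad:Z}. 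Base change gives the first assertion for arbitrary $R$.

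The second assertion is then pure descent, exactly as in Theorem~\ref{skip.F4.alb}. Because $\bfG_0 = \bfAut(\Frd_0)$ is smooth, $H^1(R,\bfG_0)$ classifies the FT systems over $R$ that become isomorphic to $\Frd_0$ over some fppf (indeed \'etale) cover $S$ of $R$; by Definition~\ref{FTS.def} together with the split case these are \emph{all} FT systems, since over a suitable cover the system becomes $\Frd_0 \otimes S = \Frd(\Her_3(\Zor(S)))$ with $\Her_3(\Zor(S))$ a split Albert $S$-algebra. Under the map $H^1(R,\bfG_0) \to H^1(R,\bfAut(\bfG_0))$ induced by conjugation, the class of an FT system $\Frd$ goes to the isomorphism class of the group scheme $\bfAut(\Frd)$. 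Now a simply connected simple $R$-group $\bfG$ of type $\ee_7$ is \emph{strongly inner} exactly when $\bfG \cong {}_\xi\bfG_0$ for some $\xi \in H^1(R,\bfG_0)$; letting $\Frd_\xi$ be the FT system obtained by twisting $\Frd_0$ by $\xi$, one gets $\bfAut(\Frd_\xi) \cong {}_\xi\bfG_0 \cong \bfG$, as wanted.

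For the third assertion, the ``if'' direction is the FT-system analogue of Lemma~\ref{proj.sim}: if $\Frd' \cong [L,h]\cdot\Frd$ for a discriminant module $[L,h] \in H^1(R,\mu_2)$, then the canonical isomorphism $\bfGL(M) \cong \bfGL(M \otimes L)$ (available since $L$ is a line bundle, cf.~the proof of Lemma~\ref{proj.sim}) restricts to an isomorphism $\bfAut(\Frd) \cong \bfAut([L,h]\cdot\Frd) = \bfAut(\Frd')$. For ``only if'', I would note that the central $\mu_2 \subseteq \bfG_0$ sits inside $\bfGL(M)$ as scalar multiplication, so that twisting an FT system $\Frd$ by $[L,h] \in H^1(R,\mu_2) = H^1(R,Z(\bfG_0))$ replaces $M$ by $M \otimes L$ and $b$, $\Psi$ by $[L,h]\cdot b$, $[L,h^{\otimes 2}]\cdot\Psi$, that is, produces $[L,h]\cdot\Frd$; this is checked over $\ZZ$ on $\Frd_0$ and transported by descent. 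Since the Dynkin diagram of type $\ee_7$ has no nontrivial automorphisms, $\bfAut(\bfG_0)$ is the adjoint group $\bfG_0/\mu_2$ \cite[Th.~XXIV.1.3]{SGA3.3}, and the central exact sequence $1 \to \mu_2 \to \bfG_0 \to \bfG_0/\mu_2 \to 1$ shows that two classes of $H^1(R,\bfG_0) = H^1(R,\bfAut(\Frd_0))$ have the same image in $H^1(R,\bfG_0/\mu_2) = H^1(R,\bfAut(\bfG_0))$ if and only if they lie in one $H^1(R,\mu_2)$-orbit. Combining, $\bfAut(\Frd) \cong \bfAut(\Frd')$ iff the classes of $\Frd$ and $\Frd'$ in $H^1(R,\bfAut(\Frd_0))$ have the same image in $H^1(R,\bfAut(\bfG_0))$, iff $\Frd' \cong [L,h]\cdot\Frd$ for some $[L,h] \in H^1(R,\mu_2)$, i.e.\ $\Frd$ and $\Frd'$ are similar.

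The main obstacle I anticipate is the split case, specifically having a clean reference (or direct argument) that $\bfG_0 \times F$ is simply connected of type $\ee_7$ when $\car F \in \{2,3\}$; once that, and hence smoothness of $\bfG_0$ over $\ZZ$, are in hand, the rest is formal. A secondary point requiring care is the bookkeeping that identifies, under faithfully flat descent, the $H^1(R,\mu_2)$-action on $H^1(R,\bfAut(\Frd_0))$ with the similarity operation $\Frd \mapsto [L,h]\cdot\Frd$ on FT systems, including the fact that the central $\mu_2 \subseteq \bfG_0$ embeds in $\bfGL(M)$ as scalar multiplication.
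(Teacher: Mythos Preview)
Your overall architecture matches the paper's: establish the split fiber claim over $\ZZ$, deduce smoothness via \cite[Prop.~6.1]{GanYu:G2} and the Chevalley identification via \cite[Th.~1.4]{Conrad:Z}, then handle the remaining two assertions by descent and the exact sequence coming from the central $\mu_2$ (using that $\ee_7$ has no diagram automorphisms). Your treatment of the second and third assertions is essentially the paper's.

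The gap is exactly where you flag it. For $\car F \notin \{2,3\}$ your reduction, via the discussion before Lemma~\ref{perm.lem}, to the stabilizer of $(q,b)$ is fine and the classical references apply. But \cite[\S6]{Lurie} is a Lie-algebra statement about forms $b^L, \Psi^L$ built from root-system data, not the Albert-algebra forms $b, \Psi$ of \eqref{Frd.b.def}--\eqref{Psi.def}; one must first identify the two and then pass from the Lie-algebra stabilizer to the group scheme. The paper does this in three steps. First it exhibits explicit sub-group-schemes of $\bfAut(\Frd_F)$ --- namely $\bfIsom(J)$ via Corollary~\ref{E6inE7}, a rank-one torus, and two additive copies of $J$, each checked using Lemma~\ref{perm.lem} --- whose Lie algebras together generate the Lie algebra of $E_7$, giving $E_7 \subseteq \bfAut(\Frd_F)$. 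Second, it argues (via uniqueness of $E_7$-invariants over $\CC$ and Lemma~\ref{perm.lem}) that Lurie's forms satisfy $b^L = \pm b$ and $\Theta^L = \pm\Theta$, hence $\Phi_i^L = \Phi_i$, and that $\bfAut(\Frd_F)$ preserves $\Psi^L \otimes F$ in every characteristic (with a separate one-line check when $\car F = 2$); then \cite[Th.~6.2.3]{Lurie} bounds $\dim \Lie \bfAut(\Frd_F)$, forcing smoothness with identity component $E_7$. Third, connectedness comes from irreducibility of the $56$-dimensional representation: an element outside $E_7(F)$ would have to centralize $E_7$ (there are no outer automorphisms), hence be a scalar, and the scalars in $\bfAut(\Frd_F)$ are exactly $\mu_2 \subseteq E_7$. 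Each step is short, but none is covered by a bare citation to \cite{Lurie}.
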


\begin{proof}
Put $J_R := \Her_3(\Zor(R))$ and $\Frd_R := \Frd(J_R)$.  We will show that $\bfAut(Q_R)$ is isomorphic to the base change to $R$ of the simply connected Chevalley group $E_7$ over $\ZZ$.  

In addition to the sub-group-scheme $\bfIsom(J_R)$ of $\bfAut(\Frd_R)$ provided by Corollary \ref{E6inE7}, we consider a rank 1 torus $\Gm$ defined by 
\[
\beta \stbtmat{\alpha}{x}{x'}{\alpha'} = \stbtmat{\beta^{-3} \alpha}{\beta x}{\beta^{-1} x'}{\beta^3 \alpha'} \quad \text{for $\beta \in \Rx$}
\]
and two copies of $J_R$ (as group schemes under addition) through which an element $y \in J_R$ acts via
\[
y \stbtmat{\alpha}{x}{x'}{\alpha'} = \stbtmat{\alpha + b(x', y)}{x+\alpha' y}{x' + x \times y}{\alpha'} \quad \text{or} \quad \stbtmat{\alpha}{x+x' \times y}{x' + \alpha y}{\alpha' + b(x', y)}.
\]
These preserve $b$ and $q$, see for example \cite[p.~95]{Brown:E7} or \cite[p.~942]{Krut:E7}, and so by Lemma \ref{perm.lem} do belong to $\bfAut(\Frd_R)$.  Considering the Lie algebras of $\bfIsom(J_R)$, $\Gm$, and the two copies of $J$, as subalgebras of $\Lie(\bfGL(\Frd_R))$, one can identify the subalgebra $L_R$ they generate with the Lie algebra of $E_7 \times R$ by picking out specific root subalgebras and so on as in \cite{Frd:E7.1} or \cite{Sel:E7}, or see \cite[\S7]{G:struct} for partial information.  Note that $\Lie(\bfAut(\Frd_R)) \supseteq L_R$.  For $F$ any algebraically closed field, we may identify the smooth closed subgroup of $\bfAut(\Frd_F)$ generated by $\bfIsom(J_F)$, $\Gm$, and the two copies of $J_F$ with $E_7 \times F$.

In \cite{Lurie}, Lurie begins with $L_\ZZ$ and defines $L_\ZZ$-invariant 4-linear forms $\Theta^L$, $\Phi_i^L$, and $\Psi^L$ and alternating bilinear form $b^L$ on the 56-dimensional Weyl module of $L_\ZZ$.  Over $\CC$, $\bfAut(\Frd_\CC)$ is simply connected of type $\ee_7$ by the references in the previous paragraph, so it preserves the base change of Lurie's forms $\Theta^L \otimes \CC$, etc.  Because $\bfAut(\Frd_\CC)(\CC)$ is dense in $\bfAut(\Frd_\CC)$, Lemma \ref{perm.lem} shows that $\bfAut(\Frd_\ZZ)$ preserves $\Theta^L$, the $\Phi_i^L$, $\Psi^L$, and $b^L$.  By the uniqueness of $E_7$ invariant bilinear and symmetric 4-linear forms on $M$ (which follows from the uniqueness over $\CC$ as in the proof of Lemma \ref{perm.lem}), we find that $b^L = \pm b$ and $\Theta^L = \pm \Theta$.  Note that regardless of the sign on $b$ in the preceding sentence, we find $\Phi^L_i = \Phi_i$ for all $i$ and $\bfAut(\Frd_F)$ preserves $b^L$.
Now let $F$ be an algebraically closed field.  If $F$ has characteristic different from 2, then $\bfAut(\Frd_F)$ preserves $2 \Psi = \Theta + \sum \Phi_i$ and the $\Phi_i$, so it preserves $\Theta$, hence $\Theta^L$, hence $\Psi^L$.  If $F$ has characteristic 2, then although $\Psi^L = \frac12 (\pm \Theta + \sum \Phi_i)$ for some choice of sign as polynomials over $\ZZ$, we have $\Psi^L \otimes F = \Psi \otimes F$.  In either case, $\bfAut(\Frd_F)$ preserves $b^L \otimes F$ and $\Psi^L \otimes F$, whence so does its Lie algebra, so $\dim \Lie \bfAut(\Frd_F) \le \dim L_F$ by \cite[Th.~6.2.3]{Lurie}.  Putting this together with the previous paragraph, we see that $\bfAut(\Frd_F)$, an affine group scheme over the field $F$, is smooth with identity component $E_7 \times F$.

We claim that $\bfAut(\Frd_F)$ is connected.  Since its identity component $E_7$ has no outer automorphisms, every element of $\bfAut(\Frd_F)(F)$ is a product of an element of $E_7(F)$ and a linear transformation centralizing $E_7$.  
The action of $E_7 \times F$ on $\Frd_F$ is irreducible (it is the 56-dimensional minuscule representation), so the centralizer of $E_7$ consists of scalar transformations.  Finally, we note that the intersection of $\bfAut(\Frd_F)$ and the scalar transformations is the group scheme $\mu_2$ of square roots of unity, which is contained in $E_7$.  In summary, $\bfAut(\Frd_F) = E_7 \times F$ for every algebraically closed field $F$.

As in the proof of Lemma \ref{skip.aut.alb}, it follows that $\bfAut(\Frd_\ZZ)$ is a simple affine group scheme that is simply connected of type $\ee_7$, and we deduce from the fact that $\bfAut(\Frd_\RR)$ is split that $\bfAut(\Frd_\ZZ)$ is in fact the Chevalley group.

\smallskip
The second claim now follows by descent.  %, compare, for example, \cite[7.2.0.48]{CalmesFasel}. 

The third claim is proved in the same manner as Proposition \ref{E6.iso}, although the current situation is somewhat easier due to the absence of nontrivial automorphisms of the Dynkin diagram of $\ee_7$ and therefore the absence of outer automorphisms for semisimple groups of that type.  The sequence
\begin{equation} \label{E7.1}
H^1(R, \mu_2) \to H^1(R, \bfAut(\Frd)) \to H^1(R, \bfAut(\bfAut(\Frd)))
\end{equation}
is exact, where $\mu_2$ is the center of $\bfAut(\Frd)$ and $\bfAut(\bfAut(\Frd)) \cong \bfAut(\Frd_R)/\mu_2$ is the adjoint group. We have $\bfAut(\Frd') \cong \bfAut(\Frd)$ if and only if the element $\Frd'$ in $H^1(R, \bfAut(\Frd))$ is in the kernel of the second map in \eqref{E7.1}, if and only if $\Frd'$ is in the image of the first map.  To complete the proof, it suffices to calculate by descent that the action of $H^1(R, \mu_2)$ on $H^1(R, \bfAut(\Frd))$ is exactly by the similarity action defined in \S\ref{FT.sec}.
\end{proof}

\begin{cor}
If $R$ is (1) a complete discrete valuation ring whose residue field is finite; (2) a finite ring; or (3) a Dedekind domain whose field of fractions $F$ is a global field with no real embeddings, then the the split FT system is the only one over $R$, up to isomorphism.
\end{cor}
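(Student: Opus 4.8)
The plan is to reduce the statement to a cohomological vanishing and then invoke the same inputs used earlier for types $\eff_4$, $\cee_3$, and $\gee_2$. By Theorem~\ref{E7.thm} and faithfully flat descent, the isomorphism classes of FT systems over $R$ form the pointed set $H^1(R,\bfG)$, where $\bfG:=\bfAut(\Frd(\Her_3(\Zor(R))))$ is the base change to $R$ of the simply connected Chevalley group of type $\ee_7$ over $\ZZ$ and the split FT system is the distinguished class; as in the proof of Theorem~\ref{skip.F4.alb} one uses that every FT system is fppf-locally (indeed, since $\bfG$ is smooth, \'etale-locally) isomorphic to the split one, so that $H^1(R,\bfG)$ really classifies all of them. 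Since $\bfG$ is smooth with connected fibers, it suffices to show $H^1(R,\bfG)=0$ in each of the three cases, exactly parallel to Proposition~\ref{G2F4.dvr} and Example~\ref{dedekind.eg1}.

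Case (2) is immediate from Lemma~\ref{skip.pr.finite}. For case (1), $R$ is complete local, hence henselian, so $H^1(R,\bfG)$ is identified with $H^1(k,\bfG_k)$ for $k$ the finite residue field, and this vanishes by Lang's theorem because $\bfG_k$ is smooth and connected; alternatively \cite[Prop.~3.10]{Conrad:Z} applies directly, as it did for the $\eff_4$ and $\cee_3$ cases in Proposition~\ref{G2F4.dvr}.

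For case (3), let $F$ be the fraction field of $R$. First one checks $H^1(F,\bfG)=0$: by the Hasse principle for the simply connected group $\bfG$ over the global field $F$, the map $H^1(F,\bfG)\to\prod_{v\text{ real}}H^1(F_v,\bfG)$ is bijective (\cite{Hrdr:2}, or \cite[p.~286, Th.~6.6]{PlatRap}), and there are no real places by hypothesis. Then $H^1(R,\bfG)=0$ follows from the triviality of the kernel of $H^1(R,\bfG)\to H^1(F,\bfG)$, which is the Dedekind-ring input used in Example~\ref{dedekind.eg1} via \cite[Satz 3.3]{Hrdr:Ded}. Equivalently, since $\bfG$ is split (being a Chevalley group) it is isotropic at every place of $F$, so strong approximation gives $H^1(R,\bfG)\cong H^1(F,\bfG)$ directly, in the spirit of the proof of Proposition~\ref{skip.numb.indef}.

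I expect case (3) to be the only genuine obstacle: the Dedekind-ring statements \cite[Satz 3.3]{Hrdr:Ded} and its strong-approximation companion are cited in the excerpt only for $\gee_2$, $\eff_4$, $\ee_8$, which all have trivial center, whereas the simply connected group of type $\ee_7$ has center $\mu_2$. So one must either verify that Harder's arguments use nothing beyond good reduction at every prime and the vanishing of $H^1$ over the finite residue fields — in which case they transfer verbatim — or fall back on strong approximation, which only needs $\bfG$ simply connected and isotropic at an omitted place, both of which hold here. One should also record that the required Hasse principle and strong approximation statements hold for function fields $F$, not only number fields.
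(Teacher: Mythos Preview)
Your proof is correct and follows exactly the approach the paper intends: its own proof is the single sentence ``Imitate the arguments in Proposition~\ref{G2F4.dvr} or Example~\ref{dedekind.eg1}, where $\bfG$ is the base change to $R$ of the simply connected Chevalley group $\bfAut(\Frd(\Zor(\ZZ)))$'', and you have spelled that out. Your caution about \cite[Satz~3.3]{Hrdr:Ded} is unnecessary, though: Harder's result holds for arbitrary Chevalley group schemes over Dedekind rings, not only those with trivial center; the list $\gee_2$, $\eff_4$, $\ee_8$ in Example~\ref{dedekind.eg1} merely reflects the types under discussion there.
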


\begin{proof}
Imitate the arguments in Proposition \ref{G2F4.dvr} or Example \ref{dedekind.eg1}, where $\bfG$ is the base change to $R$ of the simply connected Chevalley group $\bfAut(\Frd(\Zor(\ZZ)))$.
\end{proof}

\begin{rmks*}
A previous work that considered groups of type $\ee_7$ over rings is \cite{Luzgarev}.   Aschbacher \cite{Asch:iso} studied the 4-linear form in the case where $R$ is a field of characteristic 2.  The paper \cite{MWeiss:FTS} studied the case of fields of any characteristic, organized around a polynomial law $\Theta \in \Pol(\Frd, R)$ that is not homogeneous.

For a field $F$ of characteristic $\ne 2, 3$, FT systems have been studied in this century in \cite{Clerc}, \cite{Helenius}, \cite{Krut:E7}, \cite{Sp:e7}, and \cite{Borsten:explicit} to name a few.  They arise naturally in the context of the bottom row of the magic triangle from \cite[Table 2]{DeligneGross}, in connection with the existence of extraspecial parabolic subgroups as in \cite{Roe:extra} or \cite[\S12]{G:lens}, or from groups with a $\bee\cee_1$ grading \cite[p.~995]{GrossG}.  For every Albert $F$-algebra $J$, the group scheme $\bfAut(\Frd(J))$ is isotropic, see for example \cite[Lemma 5.6(i)]{Sp:e7}.  Yet there exist strongly inner groups of type $\ee_7$ that are anisotropic, see \cite[3.1]{Ti:si} or \cite[App.~A]{G:lens}, and therefore there exist FT systems $\Frd$ that are not isomorphic to $\Frd(J)$ for any $J$.  
A construction that produces all FT systems can be obtained by considering a subgroup $\Isom(J) \rtimes \mu_4$ of $\bfAut(\Frd(J))$, which leads to a surjection $H^1(F, \Isom(J) \rtimes \mu_4) \to H^1(F, \bfAut(\Frd(J)))$, see \cite[12.13]{G:lens}, \cite[Lemma 4.15]{G:struct}, or \cite[\S4]{Sp:e7}.  
%
%The embedding $\mu_4 \hookrightarrow \bfAut(\Frd(J))$ from the preceding paragraph has an analogue that makes sense over an arbitrary ring $R$.  Consider the group scheme $\bfV \subset \bfGL_2$ whose $R$-points are the elements
%\[
%\left\{ \stbtmat{s}{0}{0}{s} \mid s \in \mu_2(R) \} \cup \left\{ \stbtmat{0}{\zeta}{\zeta}{0} \right)  \mid \zeta \in \mu_4(R) \setminus \mu_2(R) \right\}.
%\]
\end{rmks*}

%----------------------------------------------------------------------------------------------------------------
%
\section{Exceptional groups over \texorpdfstring{$\ZZ$}{Z}} \label{E6E7.sec}

We now record explicit descriptions of the isomorphism classes of semisimple affine group schemes over $\ZZ$ of types $\eff_4$, $\gee_2$, $\ee_6$, and $\ee_7$.

There are four such group schemes of type $\eff_4$, namely $\bfAut(J)$ for each of the four Albert $\ZZ$-algebras listed in Theorem \ref{skip.AlbZ}\eqref{skip.AlbZ.2}.  The proof of this fact is intertwined with the proof of that theorem.  Similarly, there are two such group schemes of type $\gee_2$, namely $\bfAut(C)$ for $C = \Zor(\ZZ)$ or $\oct$.

\begin{prop} \label{E6E7}
Regarding isomorphism classes of semisimple and simply connected affine group schemes over $\ZZ$:
\begin{enumerate}
\item there are two of type $\ee_6$, namely $\bfIsom(\Her_3(C))$ and
\item there are two of type $\ee_7$, namely $\bfAut(\Frd(\Her_3(C)))$
\end{enumerate}
for $C = \Zor(\ZZ)$ or $\oct$.
\end{prop}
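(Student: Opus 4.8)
The plan is to translate the statement, via Theorem~\ref{skip.AlbZ} and the descent machinery of \S\ref{E6.sec}--\S\ref{E7.sec}, into the finite classifications of Albert algebras and FT systems over $\ZZ$, and then to use the arithmetic triviality of $\Spec\ZZ$ to rule out anything further. First, the listed groups really are simply connected simple $\ZZ$-group schemes of the asserted types: the $\ee_6$-analogue of Lemma~\ref{skip.aut.alb} recalled in \S\ref{E6.sec} gives this for $\bfIsom(\Her_3(C))$, identifying the split case with the Chevalley group, and Theorem~\ref{E7.thm} does the same for $\bfAut(\Frd(\Her_3(C)))$. The two groups of each type are distinct already after base change to $\QQ$: the Albert $\QQ$-algebras $\Her_3(\Zor(\QQ))$ and $\Her_3(\oct\otimes\QQ)$ are not isotopic (Example~\ref{isotopy.global}), so Proposition~\ref{E6.iso} separates the two $\ee_6$-groups over $\QQ$, while Corollary~\ref{Frd.iso} together with its converse over fields of characteristic zero \cite{Ferr:strict} and Theorem~\ref{E7.thm} separates the two $\ee_7$-groups.

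Next I would show that every simply connected simple $\ZZ$-group $\bfG$ of type $\ee_6$ or $\ee_7$ is a pure inner form of the relevant Chevalley group $\bfG_0$. Type $\ee_7$ has no diagram automorphisms; for type $\ee_6$ the outer part of $\bfG$ lies in the group $H^1(\ZZ,\ZZ/2)$ classifying \'etale double covers of $\Spec\ZZ$, which vanishes since $\Spec\ZZ$ has no nontrivial connected \'etale cover (Minkowski). The resulting inner class then lifts along $\bfG_0\to\bfG_0^{\mathrm{ad}}$ because $H^2(\ZZ,\mu_n)=0$ for $n=2,3$ (as $\Pic\ZZ=0$ and $\mathrm{Br}(\ZZ)=0$). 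Hence $\bfG\cong\bfIsom(M,N)$ for a twisted form $(M,N)$ of the split Albert cubic norm --- a class in $H^1(\ZZ,\bfIsom(\Her_3(\Zor(\ZZ))))$ --- in the $\ee_6$ case, and $\bfG\cong\bfAut(\Frd)$ for an FT system $\Frd$ over $\ZZ$ (Theorem~\ref{E7.thm}) in the $\ee_7$ case. Moreover $H^1(\ZZ_p,\bfG_0)=H^1(\FF_p,\bfG_0\otimes\FF_p)=0$ for every finite $p$, by smoothness and Lang's theorem (as in Lemma~\ref{skip.pr.finite}), so $\bfG$ has split reduction everywhere locally, and two such $\bfG$, $\bfG'$ with $\bfG\otimes\QQ\cong\bfG'\otimes\QQ$ are $\ZZ$-isomorphic: the scheme of isomorphisms $\bfG\to\bfG'$ is a torsor under an inner form of $\bfG_0^{\mathrm{ad}}$ (the outer part vanishing as above), it is locally trivial at $\QQ$ and at every $\ZZ_p$, hence globally trivial by the Hasse principle for such torsors over $\ZZ$ (Harder--Nisnevich, cf.\ \cite{Hrdr:Ded}).

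It then remains to count the possibilities over $\QQ$ for $\ee_6$ and over $\RR$ for $\ee_7$. For $\ee_6$: the strongly inner simply connected $\QQ$-groups of type $\ee_6$ are the $\bfIsom(M,N)$ for forms $(M,N)$ of the split Albert norm over $\QQ$; by \cite[9.12]{G:lens} each such $(M,N)$ is $(J,\alpha N_J)$ for an Albert $\QQ$-algebra $J$ and $\alpha\in\QQ^\times$, and since every Albert $\QQ$-algebra is reduced (Example~\ref{alb.global}) its norm is round and universal (Lemma~\ref{round}, Example~\ref{norm.surj}), so $(M,N)\cong(J,N_J)$; by Corollary~\ref{iso.sim} two of these pairs coincide exactly when the algebras are isotopic, and there are two isotopy classes of Albert $\QQ$-algebras (Example~\ref{isotopy.global}). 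So there are at most two, hence exactly two, simply connected $\ee_6$-groups over $\ZZ$. For $\ee_7$: a simply connected $\ee_7$-group over $\ZZ$ is split at every finite place of $\QQ$, so by the Hasse principle for groups of type $\ee_7$ \cite{Hrdr:2}, \cite{PlatRap} its $\QQ$-fibre --- hence, by the previous paragraph, $\bfG$ itself --- is determined by its real fibre, a strongly inner simply connected real $\ee_7$-group; a Weyl-group computation over $\RR$ in the spirit of Example~\ref{alb.RR} shows that $H^1(\RR,\bfAut(\Frd(\Her_3(\Zor(\RR)))))$ has exactly two elements, both realized by real Albert algebras (the split one and the $\ee_{7(-25)}$-type one carried by $\Her_3(\OO)$). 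Again at most two, hence exactly two.

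The step I expect to be the main obstacle is this ``reduction to $\RR$'' in type $\ee_7$: establishing that only two strongly inner simply connected real forms of $\ee_7$ occur, so that no further FT system over $\ZZ$ can arise. This is the integral counterpart of the finiteness phenomenon behind Theorem~\ref{skip.AlbZ}, and like it it rests on the exactness and vanishing properties of Galois cohomology over $\RR$ and $\QQ$ together with the splitness of $\bfG$ at all finite primes. By contrast, for $\ee_6$ the analogous ingredient --- that a form of the Albert norm over a field is a scalar multiple of an Albert-algebra norm --- is already available as \cite[9.12]{G:lens}, so no essentially new difficulty appears there.
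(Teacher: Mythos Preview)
Your approach is broadly correct and shares the paper's overall strategy --- reduce to pure inner forms, then pass from $\ZZ$ to $\QQ$ (or $\RR$) and count --- but the paper's execution is both shorter and cleaner, and one of your steps is shakier than you indicate.

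The paper works entirely with $H^1(\ZZ,\bfG_0)$ for $\bfG_0$ the \emph{simply connected} Chevalley group.  After citing \cite[Rem.~4.8]{Conrad:Z} for the pure-inner-form claim (replacing your $H^1(\ZZ,\ZZ/2)=0$ and $H^2(\ZZ,\mu_n)=0$ computations), the key observation is that the \emph{compact} real form of type $\ee_n$ is never a pure inner form of the split group, so every class in $H^1(\ZZ,\bfG_0)$ is automatically ``indefinite''.  This is exactly the hypothesis of Harder's strong approximation result \cite[Satz~4.2.4]{Hrdr:Ded}, which then gives a bijection $H^1(\ZZ,\bfG_0)\xrightarrow{\sim}H^1(\QQ,\bfG_0)$ directly.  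Composing with the Hasse principle $H^1(\QQ,\bfG_0)\xrightarrow{\sim}H^1(\RR,\bfG_0)$ and a Weyl-group count $|H^1(\RR,\bfG_0)|=2$ finishes both types uniformly.

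Your $\ZZ\to\QQ$ reduction instead goes through the scheme of isomorphisms $\bfG\to\bfG'$, which is a torsor under (an inner form of) the \emph{adjoint} group.  Harder's Satz~4.2.4 is stated for simply connected groups, and the local-global principle you invoke for adjoint torsors over $\ZZ$ is not what \cite{Hrdr:Ded} provides without further argument; this is where your proof is loosest, not the $\RR$-computation you flag.  Your argument can likely be repaired (e.g., by lifting back to $H^1(\ZZ,\bfG_0)$ and controlling the $\mu_n$-ambiguity), but the paper's route avoids the issue entirely by never leaving the simply connected group.  Your separate $\ee_6$ count over $\QQ$ via Albert-algebra isotopy and \cite[9.12]{G:lens} is correct but unnecessary once the uniform $\RR$-count is available.
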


\begin{proof}
Put $\bfG$ for the simply connected Chevalley group scheme over $\ZZ$ of type $\ee_n$, for $n =  6$ or 7.  By \cite[Remark 4.8]{Conrad:Z}, $\ZZ$ forms of absolutely simple and simply connected $\QQ$-group schemes are purely inner forms, i.e., are obtained by twisting $\bfG$ by a class $\xi \in H^1(\ZZ, \bfG)$.

For the split Albert algebra $J  = \Her_3(\Zor(\ZZ))$, the natural inclusions $\bfAut(J) \subset \bfIsom(J) \subset \bfAut(\Frd(J))$ give maps $H^1(R, \bfAut(J)) \to H^1(R, \bfG)$ for every ring $R$, where the domain is in bijection with the isomorphism classes of Albert $R$-algebras.  The groups in the statement we are aiming to prove are the image of $\Her_3(C)$ for $C = \Zor(\ZZ)$ or $\oct$.  The two choices for $C$, i.e., the two groups in the statement, give non-isomorphic groups over $\RR$, so it suffices to show that there are no others defined over $\ZZ$.

Now the compact real form of type $\ee_n$ is not a pure inner form, so $G_\xi \times \RR$ is not compact for all $\xi \in H^1(\ZZ, \bfG)$.  Therefore, the natural map $H^1(\ZZ, \bfG) \to H^1(\QQ, \bfG)$ is a bijection by \cite[Satz 4.2.4]{Hrdr:Ded}.  The natural map $H^1(\QQ, \bfG) \to H^1(\RR, \bfG)$ is also a bijection, a fact we have already used in Example \ref{alb.global}.  Since $H^1(\RR, \bfG)$ has two elements --- see \cite{BorovoiEvenor}, \cite[esp.~\S15]{BorovoiTim}, or \cite[Table 3]{AdamsTaibi} --- we have produced both elements of $H^1(\ZZ, \bfG)$.
\end{proof}

The proof provides the following corollary.

\begin{cor}
There are two isomorphism classes of FT systems over $\ZZ$, namely $\Frd(\Her_3(C))$ for $C = \Zor(\ZZ)$ or $\oct$.  $\hfill\qed$
\end{cor}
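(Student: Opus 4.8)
The plan is to deduce this directly from Theorem~\ref{E7.thm} together with the computation carried out inside the proof of Proposition~\ref{E6E7}. By Theorem~\ref{E7.thm}, the group scheme $\bfAut(\Frd(\Her_3(\Zor(\ZZ))))$ is the base change to $\ZZ$ of the simply connected Chevalley group $\bfG$ of type $\ee_7$. Since an FT system over $\ZZ$ is, by definition, a form of $\Frd(\Her_3(\Zor(\ZZ)))$ trivialized by a faithfully flat cover, the standard dictionary of descent identifies the set of isomorphism classes of FT systems over $\ZZ$ with the pointed set $H^1(\ZZ, \bfAut(\Frd(\Her_3(\Zor(\ZZ))))) = H^1(\ZZ, \bfG)$, the split FT system corresponding to the distinguished class.

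The first step is then to read off from the proof of Proposition~\ref{E6E7} (the case of type $\ee_7$) that $H^1(\ZZ, \bfG)$ has exactly two elements: there the maps $H^1(\ZZ, \bfG) \to H^1(\QQ, \bfG) \to H^1(\RR, \bfG)$ are shown to be bijections, $H^1(\RR, \bfG)$ has two elements, and under the map $H^1(\ZZ, \bfAut(J)) \to H^1(\ZZ, \bfG)$ induced by the inclusions $\bfAut(J) \subset \bfIsom(J) \subset \bfAut(\Frd(J))$ with $J = \Her_3(\Zor(\ZZ))$, the classes of the Albert $\ZZ$-algebras $\Her_3(\Zor(\ZZ))$ and $\Her_3(\oct)$ hit the two distinct elements. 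The second step is to note that the image of the class of $\Her_3(C)$ under that composite is precisely the class of $\Frd(\Her_3(C))$, by naturality of $J \mapsto \Frd(J)$. Combining these, every FT system over $\ZZ$ is isomorphic to $\Frd(\Her_3(C))$ for $C = \Zor(\ZZ)$ or $\oct$, and these two are non-isomorphic since their automorphism groups are the two distinct $\ZZ$-group schemes of type $\ee_7$ in Proposition~\ref{E6E7}.

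I expect no serious obstacle, as all the substantive input is already in hand. An equivalent route that avoids quoting the internals of the proof of Proposition~\ref{E6E7} is: for an FT system $\Frd$ over $\ZZ$, the group $\bfAut(\Frd)$ is a pure inner form of $\bfG$ by \cite[Remark~4.8]{Conrad:Z}, hence isomorphic to $\bfAut(\Frd(\Her_3(C)))$ for some $C$ by Proposition~\ref{E6E7}; Theorem~\ref{E7.thm} then forces $\Frd$ to be \emph{similar} to $\Frd(\Her_3(C))$, and since $\Pic(\ZZ) = 0$ and $\ZZ^\times = \{\pm 1\}$ every class in $H^1(\ZZ, \mu_2)$ equals $\qform{\pm 1}$, which acts trivially on $\Frd(\Her_3(C))$ by the Example at the end of \S\ref{FT.sec}; thus $\Frd \cong \Frd(\Her_3(C))$. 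The only point deserving care in this variant is exactly that final collapse of similarity classes onto isomorphism classes, which is special to the ring $\ZZ$.
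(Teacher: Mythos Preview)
Your proposal is correct and takes essentially the same approach as the paper: the paper simply records that ``the proof [of Proposition~\ref{E6E7}] provides the following corollary,'' and your first two paragraphs are precisely the unpacking of that claim --- identifying FT systems with $H^1(\ZZ,\bfG)$ via Theorem~\ref{E7.thm}, invoking the computation $|H^1(\ZZ,\bfG)|=2$ from inside that proof, and tracking that the map $H^1(\ZZ,\bfAut(J))\to H^1(\ZZ,\bfG)$ sends $\Her_3(C)$ to $\Frd(\Her_3(C))$. Your alternative route via similarity and the vanishing of the $H^1(\ZZ,\mu_2)$-action on $\Frd(J)$ is also valid and a nice observation, though not needed here.
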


We have addressed now all the simple types that are usually called ``exceptional'', apart from $\ee_8$.  A classification of $\ZZ$-groups of type $\ee_8$ like Proposition \ref{E6E7} appears currently out of reach, because among those group schemes $\bfG$ over $\ZZ$ such that $\bfG \times \RR$ is the compact group of type $\ee_8$, there are at least 13,935 distinct isomorphism classes \cite[Prop.~5.3]{Gross:Z}.   Among those $\bfG$ over $\ZZ$ of type $\ee_8$ such that $\bfG \times \RR$ is not compact, the same argument as in the proof of Proposition \ref{E6E7} shows that there are two isomorphism classes.

%\bibliographystyle{amsalpha}
%\bibliography{skip_master}

\newcommand{\etalchar}[1]{$^{#1}$}
\providecommand{\bysame}{\leavevmode\hbox to3em{\hrulefill}\thinspace}
\providecommand{\MR}{\relax\ifhmode\unskip\space\fi MR }
% \MRhref is called by the amsart/book/proc definition of \MR.
\providecommand{\MRhref}[2]{%
  \href{http://www.ams.org/mathscinet-getitem?mr=#1}{#2}
}
\providecommand{\href}[2]{#2}

\end{document}